\tikzstyle{black dot}=[fill=black, draw=black, shape=circle, minimum size=3pt, inner sep=0pt]
\tikzstyle{black dot small}=[fill=black, draw=black, shape=circle, minimum size=3pt, inner sep=0pt]
\tikzstyle{big white circle}=[fill=white, draw=black, shape=circle, minimum width=0.75cm]
\tikzstyle{white dot big}=[fill=white, draw=black, shape=circle, inner sep=1pt]
\tikzstyle{white dot}=[fill=white, draw=black, shape=circle, minimum size=3pt, inner sep=0pt]
\tikzstyle{flat box}=[fill=white, draw=black, shape=rectangle, minimum width=2.5cm, minimum height=0.5cm]
\tikzstyle{square}=[fill=white, draw=black, shape=rectangle]
\tikzstyle{flat box 2}=[fill=white, draw=black, shape=rectangle, minimum height=0.5cm, minimum width=1.0cm]
\tikzstyle{over }=[front]
\tikzstyle{theta}=[fill=blue, draw=blue, shape=ellipse, minimum height=6pt, minimum width=6pt, inner sep=0pt]
\tikzstyle{thetabig}=[fill=blue, draw=blue, shape=ellipse, minimum width=1cm, minimum height=0.01cm]
\tikzstyle{thetainv}=[fill=white, draw=blue, shape=ellipse, minimum height=6pt, minimum width=6pt, inner sep=0pt]
\tikzstyle{thetabinv}=[fill=white, draw=blue, shape=ellipse, minimum width=1cm, minimum height=0.01cm]
\tikzstyle{mid arrow}=[-, postaction={on each segment={mid arrow}}]
\tikzstyle{end arrow}=[->]
\tikzstyle{red mid arrow}=[-, draw={rgb,255: red,214; green,42; blue,51}, postaction={on each segment={mid arrow}}, line width=1pt]
\tikzstyle{blue}=[-, draw=blue]
\tikzstyle{blue mid arrow}=[-, draw={rgb,255: red,23; green,37; blue,167}, postaction={on each segment={mid arrow}}, line width=1pt]
\tikzstyle{over}=[-, link]
\tikzstyle{mapsto}=[{|->}]
\tikzset{
  on each segment/.style={
    decorate,
    decoration={
      show path construction,
      moveto code={},
      lineto code={
        \path [#1]
        (\tikzinputsegmentfirst) -- (\tikzinputsegmentlast);
      },
      curveto code={
        \path [#1] (\tikzinputsegmentfirst)
        .. controls
        (\tikzinputsegmentsupporta) and (\tikzinputsegmentsupportb)
        ..
        (\tikzinputsegmentlast);
      },
      closepath code={
        \path [#1]
        (\tikzinputsegmentfirst) -- (\tikzinputsegmentlast);
      },
    },
  },
  mid arrow/.style={postaction={decorate,decoration={
        markings,
        mark=at position .5 with {\arrow[#1]{stealth}}
      }}},
}
\tikzset{%
  link/.style    = { white, double = blue, line width = 1.8pt,
                     double distance = 0.4pt },
  channel/.style = { white, double = blue, line width = 0.8pt,
                     double distance = 0.6pt },
}
\DeclareRobustCommand{\em}{%
	\@nomath\em \if b\expandafter\@car\f@series\@nil
	\normalfont \else \slshape \fi}
\numberwithin{equation}{section}
\numberwithin{equation}{section}
\newtheoremstyle{style1}
{13pt}
{13pt}
{}
{}
{\normalfont\bfseries}
{.}
{.5em}
{}
\theoremstyle{style1}
\newtheorem{definition}{Definition}[section]
\newtheorem{example}[definition]{Example}
\newtheorem{remark}[definition]{Remark}
\newtheorem*{repd@theorem}{\repd@title}
\newcommand{\newrepdtheorem}[2]{%
	\newenvironment{repd#1}[1]{%
		\def\repd@title{#2 \ref{##1}}%
		\begin{repd@theorem}}%
		{\end{repd@theorem}}}
\newcommand{\catf}[1]{{\mathsf{#1}}}
\newtheoremstyle{style2}
{13pt}
{13pt}
{\slshape}
{}
{\normalfont\bfseries}
{.}
{.5em}
{}
\theoremstyle{style2}
\newtheorem*{rep@theorem}{\rep@title}
\newcommand{\newreptheorem}[2]{%
	\newenvironment{rep#1}[1]{%
		\def\rep@title{#2 \ref{##1}}%
		\begin{rep@theorem}}%
		{\end{rep@theorem}}}
\newtheorem{lemma}[definition]{Lemma}
\newtheorem{theorem}[definition]{Theorem}
\newtheorem{proposition}[definition]{Proposition}
\newtheorem{corollary}[definition]{Corollary}
\def\hocolim{\mathrm{hocolim}}
\newcommand{\spaceplease}{\needspace{5\baselineskip}}
\newcommand{\Z}{\mathbb{Z}}
\newcommand{\Grpd}{\catf{Grpd}}
\newcommand{\Map}{\catf{Map}}
\newcommand{\Diff}{\catf{Diff}}
\newcommand{\Env}{\catf{U}\,}
\newcommand{\Envint}{\catf{U}_{\!\int}\,}
\newcommand{\Envintg}{\catf{U}_{\!\int}^g\,}
\newcommand{\Envintone}{\catf{U}_{\!\int}^1\,}
\newcommand{\ra}[1]{\xrightarrow{\ #1 \ }}
\newcommand{\algo}{\mathfrak{A}_\cat{O}}
\let\DH\undefined
\newcommand{\DH}{\catf{DH}}
\newcommand{\framed}{\catf{f}E_2}
\newcommand{\Surf}{\catf{Surf}}
\newcommand{\hocolimsub}[1]{\underset{#1}{\operatorname{hocolim}}\,}
\newcommand{\Hbdy}{\catf{Hbdy}}
\newcommand{\As}{\catf{As}}
\newcommand{\Top}{\catf{Top}}
\newcommand{\Legs}{\catf{Legs}}
\newcommand{\ModAlg}{\catf{ModAlg}\,}
\newcommand{\CycAlg}{\catf{CycAlg}}
\newcommand{\cat}[1]{\mathcal{#1}}
\newcommand{\Aut}{\operatorname{Aut}}
\newcommand{\End}{\operatorname{End}}
\newcommand{\Hom}{\operatorname{Hom}}
\newcommand{\id}{\operatorname{id}}
\newcommand{\Cat}{\catf{Cat}}
\newcommand{\DS}{\text{/\hspace{-0.1cm}/}}
\let\to\undefined
\newcommand{\to}{\longrightarrow}
\let\mapsto\undefined
\newcommand{\mapsto}{\longmapsto}
\newcommand{\Rexf}{\catf{Rex}^{\mathsf{f}}}
\newcommand{\Lexf}{\catf{Lex}^\mathsf{f}}
\newcommand{\Vect}{\catf{Vect}}
\newcommand{\fVect}{\catf{Vect}^\catf{f}}
\newcommand{\Forests}{\catf{Forests}}
\newcommand{\Graphs}{\catf{Graphs}}
\newcommand{\opp}{\text{opp}}
\let\colon\undefined\newcommand{\colon}{:}
\DeclareMathSymbol{\Phiit}{\mathalpha}{letters}{"08} 
\DeclareMathSymbol{\Psiit}{\mathalpha}{letters}{"09}
\DeclareMathSymbol{\Sigmait}{\mathalpha}{letters}{"06}
\DeclareMathSymbol{\Xiit}{\mathalpha}{letters}{"04}
\DeclareMathSymbol{\Piit}{\mathalpha}{letters}{"05}\let\Pi\undefined\newcommand{\Pi}{\Piit}
\DeclareMathSymbol{\Gammait}{\mathalpha}{letters}{"00}
\DeclareMathSymbol{\Omegait}{\mathalpha}{letters}{"0A}\let\Omega\undefined\newcommand{\Omega}{\Omegait}
\DeclareMathSymbol{\Upsilonit}{\mathalpha}{letters}{"07}
\DeclareMathSymbol{\Thetait}{\mathalpha}{letters}{"02}
\DeclareMathSymbol{\Lambdait}{\mathalpha}{letters}{"03}\let\Lambda\undefined\newcommand{\Lambda}{\Lambdait}
\newcommand{\envframed}{\mathbb{L}\mathsf{U}\,\framed}
\newcommand{\envframedgn}{\mathbb{L}\mathsf{U}^{g,n}\,\framed}
\newcommand{\envframedt}{\mathbb{L}\mathsf{U}^{1,0}\,\framed}
\let\Phi\undefined\newcommand{\Phi}{\Phiit}
\let\Sigma\undefined\newcommand{\Sigma}{\Sigmait}
\let\Psi\undefined\newcommand{\Psi}{\Psiit}
\let\Gamma\undefined\newcommand{\Gamma}{\Gammait}
\newenvironment{pnum}{\begin{enumerate}[label=(\roman*)]}{\end{enumerate}}
\renewcommand\section{\@startsection {section}{1}{\z@}%
	{-3.5ex \@plus -1ex \@minus -.2ex}%
	{2.3ex \@plus.2ex}%
	{\normalfont\scshape\centering}}
\begin{document}

	\vspace*{-1.5cm}	\begin{flushright}\small		{\sffamily CPH-GEOTOP-DNRF151} 	\end{flushright}	\vspace{5mm}	\begin{center}	\textbf{\Large{Classification of Consistent Systems of \\[0.5ex]
				Handlebody Group Representations}}\\	\vspace{1cm}	{\large Lukas Müller $^{a}$} \ and \ \ {\large Lukas Woike $^{b}$}\\ 	\vspace{5mm}{\slshape $^a$ Max-Planck-Institut f\"ur Mathematik\\ Vivatsgasse 7 \\  53111 Bonn, Germany}\\ \emph{lmueller4@mpim-bonn.mpg.de }	\\[7pt]	{\slshape $^b$ Institut for Matematiske Fag\\ K\o benhavns Universitet\\	Universitetsparken 5 \\  2100 K\o benhavn \O , Denmark }\\ \ \emph{ljw@math.ku.dk }\end{center}	\vspace{0.3cm}	
	\begin{abstract}\noindent 
		The classifying spaces of handlebody groups form a modular operad. Algebras over the handlebody operad yield systems of representations of handlebody groups that are compatible with gluing. We prove that algebras over the modular operad of handlebodies with values in an arbitrary symmetric monoidal bicategory $\mathcal{M}$ (we introduce for these the name \emph{ansular functor}) are equivalent to self-dual balanced braided algebras in $\mathcal{M}$. After specialization to a \emph{linear} framework, this proves that consistent systems of handlebody group representations on finite-dimensional vector spaces are equivalent to ribbon Grothendieck-Verdier categories in the sense of Boyarchenko-Drinfeld. Additionally, it produces a concrete formula for the vector space assigned to an arbitrary handlebody in terms of a generalization of Lyubashenko's coend. Our main result can be used to obtain an ansular functor from vertex operator algebras subject to mild finiteness conditions. This includes examples of vertex operator algebras whose representation category has a non-exact monoidal product. 	\end{abstract}

\tableofcontents

\spaceplease
\section{Introduction and summary}
In this article, we 
classify systems of 
handlebody group representations on finite-dimensional vector 
spaces over an algebraically closed field $k$, subject to the requirement that these representations are compatible with the gluing of handlebodies. 
We call such a consistent system of handlebody group representations an \emph{ansular functor} and will give a precise definition momentarily. 
The word \emph{ansular functor} is derived from the Latin word \emph{ansa} for handle. It is supposed to emphasize the close relationship to the notion of a \emph{modular functor} \cite{Segal,ms89,turaev,tillmann,baki} that
plays a key role at the cross-roads of low-dimensional topology, representation theory and mathematical physics, and is defined, roughly, as a 
 system of representations of mapping class groups	 of \emph{surfaces} that is compatible with gluing.
  Every modular functor yields an ansular functor by restriction along the boundary functor from handlebodies to surfaces (Remark~\ref{remmodular}).

 Systems of handlebody group representations are easier to control than systems of mapping class group representations. As a consequence, the notion of an ansular functor is way more flexible, and much stronger statements can be made. 
 This somewhat pragmatic reason explains partly
 the focus on ansular functors in this paper, but there is also a deeper algebraic reason:
  The classification of ansular functors can be accomplished in terms of structures of	 independent \emph{algebraic} interest, namely monoidal categories with Grothendieck-Verdier duality as defined by Boyarchenko and Drinfeld \cite{bd} that can be built from several rich sources in quantum algebra. In contrast to finite tensor categories in the sense of Etingof and Ostrik \cite{etingofostrik}, Grothendieck-Verdier categories are not necessarily rigid and can therefore have a non-exact monoidal product.
 Providing a topological perspective on Grothendieck-Verdier duality is an important goal in itself.

 Before explaining Grothendieck-Verdier duality in more detail, let us expand on the notion of an ansular functor:
 As mentioned above, an ansular functor assigns to a handlebody $H$ with $n$ disks embedded in $\partial H$, that
 each carry a label $X_j$ for $1\le j\le n$ chosen from a fixed label set,
 a finite-dimensional vector space $B_H(X_1,\dots,X_n)$ 
 with an action of the mapping class group $\Map(H)$ of the handlebody $H$, see Figure~\ref{fighandlebody}.
 In this paper, all diffeomorphisms of handlebodies (or surfaces) and their mapping classes will be \emph{orientation-preserving}.
 The vector spaces $B_H(X_1,\dots,X_n)$, that sometimes are referred to as \emph{spaces of conformal blocks},
 are subject to various consistency conditions 
 and, in particular, a gluing axiom that we refer to as \emph{excision}. This will entail that the label \emph{set} actually inherits the structure of a \emph{linear category} by evaluation of the ansular functor on the cylinder.

 	\begin{figure}[h]
 	\centering
 	\scalebox{.85}{
\begingroup%
  \makeatletter%
  \providecommand\color[2][]{%
    \errmessage{(Inkscape) Color is used for the text in Inkscape, but the package 'color.sty' is not loaded}%
    \renewcommand\color[2][]{}%
  }%
  \providecommand\transparent[1]{%
    \errmessage{(Inkscape) Transparency is used (non-zero) for the text in Inkscape, but the package 'transparent.sty' is not loaded}%
    \renewcommand\transparent[1]{}%
  }%
  \providecommand\rotatebox[2]{#2}%
  \newcommand*\fsize{\dimexpr\f@size pt\relax}%
  \newcommand*\lineheight[1]{\fontsize{\fsize}{#1\fsize}\selectfont}%
  \ifx\svgwidth\undefined%
    \setlength{\unitlength}{529.14479589bp}%
    \ifx\svgscale\undefined%
      \relax%
    \else%
      \setlength{\unitlength}{\unitlength * \real{\svgscale}}%
    \fi%
  \else%
    \setlength{\unitlength}{\svgwidth}%
  \fi%
  \global\let\svgwidth\undefined%
  \global\let\svgscale\undefined%
  \makeatother%
  \begin{picture}(1,0.15913682)%
    \lineheight{1}%
    \setlength\tabcolsep{0pt}%
    \put(0,0){\includegraphics[width=\unitlength,page=1]{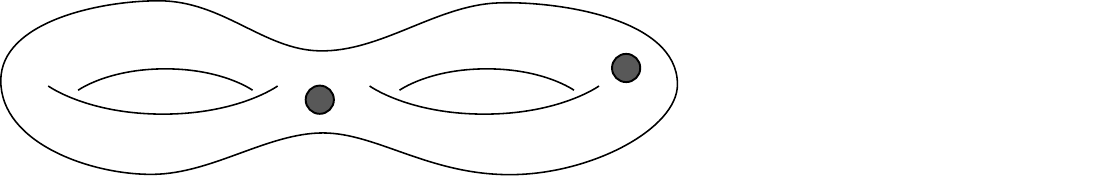}}%
    \put(0.29385602,0.08567282){\color[rgb]{0,0,0}\makebox(0,0)[lt]{\lineheight{1.25}\smash{\begin{tabular}[t]{l}$X$\end{tabular}}}}%
    \put(0.54449153,0.11802717){\color[rgb]{0,0,0}\makebox(0,0)[lt]{\lineheight{1.25}\smash{\begin{tabular}[t]{l}$Y$\end{tabular}}}}%
    \put(0.63292065,0.08426005){\color[rgb]{0,0,0}\makebox(0,0)[lt]{\lineheight{1.25}\smash{\begin{tabular}[t]{l}$\longmapsto B_H(X,Y) \curvearrowleft \Map(H)$\end{tabular}}}}%
  \end{picture}%
\endgroup%
}
 	\caption{A handlebody $H$ with two disks embedded in $\partial H$ labeled with $X$ and $Y$. }
 	\label{fighandlebody}
 \end{figure}

In this na\" ive description, it is a little inconvenient to make the notion of an ansular functor precise, especially the behavior under gluing. Moreover, the focus on \emph{linear} representations is artificial from a topological perspective. The theory of cyclic and 
modular operads of Getzler and Kapranov \cite{gk,gkmod} allows us to give a more compact description. A \emph{cyclic operad} is an operad which comes with a way to cyclically permute 
 the inputs with the output. A \emph{modular operad} additionally admits self-compositions of operations.
For any cyclic or modular operad with values in the bicategory of categories, one can define cyclic or modular algebras with values in any symmetric monoidal bicategory. 
 We recall the definition of cyclic and modular operads and algebras over them in Section~\ref{secoperads} using the very elegant approach given by Costello~\cite{costello} based on graph categories. 
The most important modular operad in this paper is the (groupoid-valued) modular operad $\Hbdy$ of handlebodies and their mapping classes.
It allows us to define the notion of an ansular functor:

\begin{repddefinition}{defansularfunctordef}
	An \emph{ansular functor}
	with values in a symmetric monoidal bicategory $\cat{M}$
	is an $\cat{M}$-valued modular algebra over the handlebody operad. 
\end{repddefinition}

After an unpacking, this definition tells us that an ansular functor 
has an underlying object $A\in \cat{M}$ and for any $n\ge 0$ functors $\Hbdy(n)\to \cat{M}(A^{\otimes n},I)$, where $\Hbdy(n)$ is the arity $n$ term of the category-valued handlebody operad and $\cat{M}(A^{\otimes n},I)$ is the morphism category from the $n$-fold monoidal product of $A$ to the monoidal unit $I$ in $\cat{M}$. These functors are subject to equivariance and composition axioms that hold up to coherent isomorphism (but all of that is now hard-coded in the notion of a modular algebra). 
In particular, for a handlebody $H$ with $n$ disks embedded in $\partial H$, we obtain a 1-morphism $A^{\otimes n}\to I$, and elements in the handlebody group of $H$ give rise to 2-automorphisms of this 1-morphism. 

In order to make contact with the na\" ive description of an ansular functor given above, one specializes 
$\cat{M}$ to be $\Lexf$, the symmetric monoidal bicategory of \begin{itemize}
	\item finite categories, i.e.\
$k$-linear abelian categories 
which have finite-dimensional morphism spaces, enough projective
objects, finitely many 
simple objects up to isomorphisms; moreover, every object has finite length, \item 
left exact functors as 1-morphisms, \item natural transformations as 2-morphisms.\end{itemize} The monoidal product is the Deligne product $\boxtimes$. The monoidal unit is the category $\fVect$ of finite-dimensional vector spaces over $k$. The field $k$ is fixed throughout and therefore suppressed in the notation. It is assumed to be algebraically closed.
If $A$ is an ansular functor with values in $\Lexf$, then the underlying category $\cat{C}$
of $A$ takes the role of a category of labels, and the evaluation
of $\cat{C}^{\boxtimes n} \to \fVect$ on a family of labels $(X_1,\dots,X_n)$ is a vector space $B_H(X_1,\dots,X_n)$  ---
just as in the above na\" ive description. It carries an action of the handlebody group of $H$. However, let us emphasize again that, topologically speaking, there is no reason to specialize to $\cat{M}=\Lexf$, and we will therefore formulate our results mostly for a general symmetric monoidal bicategory $\cat{M}$. Nonetheless, the case $\cat{M}=\Lexf$ is extremely important because it allows us to exhibit interesting classes of examples.

Let us now state the classification result for ansular functors. To this end, we need the following definition:

\begin{repddefinition}{defselfdual}
A \emph{self-dual balanced braided algebra} in a symmetric monoidal bicategory $\cat{M}$ is an object $A \in \cat{M}$
equipped with the following structure: \begin{itemize}
	\item $A$ is a \emph{balanced braided algebra in $\cat{M}$}, i.e.\ $A$ is equipped with 
	\begin{itemize}
		\item 
a multiplication $\mu : A \otimes A \to A$ which is associative and unital
	(the unit is a 1-morphism $u:I\to A$ from the unit $I$ of $\cat{M}$ to $A$)
	up to coherent isomorphism,
	\item an isomorphism $c: \mu \to \mu^\opp = \mu \circ \tau$ (here $\tau$ is the symmetric braiding of $\cat{M}$) called \emph{braiding} subject to the usual hexagon relations (also known as Yang-Baxter equations), 
	\item an isomorphism $\theta : \id_A \to \id_A$ called \emph{balancing} subject to the requirements
	\begin{align}
		\theta \circ \mu  &= c^2 \circ \mu(\theta  \otimes \theta) \ , \\
		\theta \circ u &= \id_u \ . 
	\end{align}
\end{itemize}
\item $A$ is equipped with a non-degenerate pairing 
$\kappa : A \otimes A \to I$, i.e.\ a 1-morphism which exhibits $A$ as its own dual in the homotopy category of $\cat{M}$, and an isomorphism $\gamma : \kappa(u\otimes \mu)\to \kappa$ such that  \begin{itemize}\item the isomorphism
	$\kappa(u\otimes \mu(u,-))\longrightarrow \kappa(u,-)$ induced by 
	$\gamma$ agrees with the isomorphism induced by the unit constraint,
	\item $\kappa(\theta \otimes \id_A)=\kappa(\id_A \otimes \theta)$.
	\end{itemize}
\end{itemize}
\end{repddefinition}

\spaceplease
\begin{reptheorem}{thmansular2}[Classification of ansular functors]
	For any symmetric monoidal bicategory $\cat{M}$, there is an equivalence between
	$\cat{M}$-valued ansular functors 
	and self-dual
	balanced braided algebras in $\cat{M}$.
\end{reptheorem}
The result can be understood as an equivalence of suitable \emph{2-groupoids} of ansular functors and self-dual balanced braided algebras.
Theorem~\ref{thmansular2} is a classification in the sense that it describes the algebraic structure `ansular functor' explicitly through a finite list of generators and relations.
Needless to say, it does not allow us to `list' all possible ansular functors (as we will explain below, this algebraic structure is at least as rich as ribbon categories or vertex operator algebras up to equivalence of their representation category).

\subparagraph{The linear version of the main result.}
As mentioned above,
one of  the most important cases for applications of Theorem~\ref{thmansular2} in quantum algebra is $\cat{M}=\Lexf$.
In \cite{cyclic} it is proven that $\Lexf$-valued
self-dual balanced braided algebras are equivalent 
to ribbon Grothendieck-Verdier categories 
in the sense of Boyarchenko and Drinfeld \cite{bd}.
Roughly, a Grothendieck-Verdier category is a monoidal category $\cat{C}$ with a distinguished object $K\in\cat{C}$ such that for any $X\in\cat{C}$ the functor $\cat{C}(K,X\otimes-)$ is representable (we call the representing object $DX\in\cat{C}$) and such that the functor $D:	\cat{C}\to\cat{C}^\opp,X\mapsto DX$ is an equivalence.
A ribbon Grothendieck-Verdier category is additionally equipped with a braiding $c_{X,Y} :X\otimes Y \to Y\otimes X$ and a balancing $\theta_X : X \to X$ in a compatible way; we give the details in Section~\ref{secansular}. 
The notion of Grothendieck-Verdier duality is based on 
Barr's notion of a \emph{$\star$-autonomous category} \cite{barr}
and should be understood as a weak form of rigidity (existence of duals in the category)
which, in 
contrast to the usual rigidity, 
does not imply the exactness of the monoidal product.
This gives us the following linear version of the classification result: 

\begin{reptheorem}{thmclassansfunctor}[Classification of ansular functors --- linear version]
	There is an equivalence between
	ansular functors with values in  $\Lexf$ 
	and ribbon Grothendieck-Verdier categories in $\Lexf$.
\end{reptheorem}

In Corollary~\ref{corallansular}, we give a fully explicit description of all $\Lexf$-valued ansular functors in terms of their genus zero part and a generalization of the Lyubashenko coend \cite{lyu,lyulex}.

Any finite ribbon category in the sense of \cite{egno} is a ribbon Grothendieck-Verdier category in $\Lexf$. Therefore, an example is in particular the category of finite-dimensional modules over a finite-dimensional ribbon Hopf algebra, see Example~\ref{exhopf} for a brief comment.
Most remarkably, by a recent of result of Allen, Lentner, Schweigert and Wood \cite{alsw}
building on the tensor product theory of Huang, Lepowsky and Zhang \cite{hlz},
suitable categories of modules over a vertex operator algebra
form a ribbon Grothendieck-Verdier category with the contragredient representation as the (not necessarily rigid!) dual.
This includes e.g.\ the $\cat{W}_{2,3}$ triplet model \cite{grw} which is known to have a non-exact monoidal product.
Using these algebraic results, Theorem~\ref{thmansular} gives us, for any vertex operator algebra subject to rather mild conditions, 
an ansular functor and hence spaces of
conformal blocks 
which, on the level of the representation category, are explicitly computable and 
carry at the very least
handlebody group representations (Corollary~\ref{corvoa}).
One should appreciate that both the algebraic results of \cite{alsw} and the topological ones of \cite{cyclic} and the present paper go beyond the usual framework of finite tensor categories \cite{etingofostrik,egno} with built-in rigidity.
In particular, we see:

\spaceplease
\begin{repcorollary}{cornonexact}
	There exist $\Lexf$-valued ansular functors whose underlying monoidal category is not exact and hence not rigid.
\end{repcorollary}

An important subclass of ribbon Grothendieck-Verdier categories is formed by 
those whose dualizing object is the monoidal unit
(this is closely related to the notion of an
\emph{$r$-category}~\cite[Definition~0.5]{bd}). We give --- based on Theorem~\ref{thmclassansfunctor} --- a necessary and sufficient \emph{topological}
criterion when this is the case (Corollary~\ref{corrcat}).

\subparagraph{The strategy for the proof the main result.}
Finally, in this last section of the introduction, we would like to give the idea for the proof of the main result.
We will also use this opportunity to explain  
the relation to our previous articles \cite{cyclic,mwdiff} in which we have initiated the study of the relation between Grothendieck-Verdier duality and the geometry of handlebodies. 
The present article, or more specifically the classification result for ansular functors, concludes the study of this relation.
Moreover, we would like to highlight several results of independent interest
that appear on the path towards the main result, in particular a connection
between the dihedral homology, as defined by Loday in \cite{loday}, of the topological algebra of unary operations of 
a cyclic operad and its genus one contribution of the \emph{modular envelope} defined by Costello \cite{costello}.
This relation is exploited for the framed $E_2$-operad. 

Let us give a brief overview:
Since the framed $E_2$-operad 
can be identified with the operad of oriented 
genus zero surfaces, it	 comes 
with  the structure of a cyclic operad.
By means of Costello's modular envelope \cite{costello},
any cyclic operad can be freely completed to a modular operad. It is understood here that this is done in a homotopically correct, i.e.\ derived way. 
Giansiracusa \cite{giansiracusa} proves that there is a canonical map from the derived modular envelope $\envframed$ of framed $E_2$ (as topological operad) to the modular operad $\Hbdy$ of handlebodies, i.e.\ the modular operad built from classifying spaces of handlebody groups. 
Giansiracusa proves that this map
\begin{align}
	\envframed \to \Hbdy
\end{align}
induces a bijection
on path components
and a homotopy equivalence on all path components 
except the one 	for the solid closed torus
(moreover, the three-dimensional ball and the three-dimensional ball with one embedded disk are excluded here).
Explicitly, this means that we obtain in total arity $n$ (meaning that the number of inputs plus output is $n$) the decomposition
\begin{align}
	\envframed (n) = \bigsqcup_{g\ge 0} \envframedgn \ , \label{eqndecomposition}
\end{align}
into connected components $\envframedgn$. For $(g,n)\neq (1,0)$, 
the space $\envframedgn$ is a classifying space for the mapping class group $\Map(H_{g,n})$ of the handlebody $H_{g,n}$ of genus $g$ and $n$ disks embedded in the boundary
(note that if $(g,n)\neq (0,0),(1,0)$, the mapping class group is equivalent to the topological group of diffeomorphisms).
As one of the first  technical steps, we need to understand the genus one component $	\envframedt $.
To this end, the following result valid for \emph{any} groupoid-valued cyclic operad is key:
\begin{reptheorem}{thmgenusonegeneral}
	For any groupoid-valued cyclic operad $\cat{O}$, the genus one contribution to the derived modular envelope of $\cat{O}$ is homotopy equivalent to the dihedral homology of the topological algebra of unary operations of $\cat{O}$.
\end{reptheorem}
Together with the results of \cite{mwdiff}, this will tell us that the genus one component $\envframedt$
	of the derived modular envelope of the topological framed $E_2$-operad $\framed$ is
	 homotopy equivalent  to $B \Diff (H_{1,0})$, where $\Diff (H_{1,0})$ is the topological group of diffeomorphisms of the genus one handlebody $H_{1,0}=\mathbb{S}^1\times\mathbb{D}^2$; in formulae,
	\begin{align}
		\envframedt   \simeq   B \Diff (H_{1,0}) \ \ .
	\end{align}
When combined with the results of \cite{giansiracusa}  (there are some additional subtleties coming from the fact that our treatment of unary operations is different from the one in \cite{giansiracusa}), we may then conclude that, after taking fundamental groupoids,
the canonical map from the derived modular envelope of $\framed$ to $\Hbdy$ yields an equivalence of
groupoid-valued modular operads (Theorem~\ref{thmmodenvpi}).

Additionally,
we prove the following modular extension result for cyclic algebras formulated in terms of a `category-valued version' $\Envint\cat{O}$ of the modular envelope built using the Grothendieck construction (see Section~\ref{modenvsec} for details): 
\begin{reptheorem}{thmositionmodext}
	Let $\cat{O}$ be a groupoid-valued cyclic operad and $\cat{M}$ a symmetric monoidal bicategory.
The modular extension of cyclic algebras and the  restriction afford mutually weakly inverse equivalences between the 2-groupoids of
cyclic $\cat{O}$-algebras in $\cat{M}$
and  modular $\Pi |B \Envint\cat{O}|$-algebras in $\cat{M}$, i.e.\
\begin{equation}
\begin{tikzcd}
\CycAlg(\cat{O}) \ar[rrrr, shift left=2,"\text{modular extension}"] &&\simeq&& \ar[llll, shift left=2,"\text{restriction}"] \ModAlg( \Pi |B \Envint\cat{O}|) \ . 
\end{tikzcd}
\end{equation}
\end{reptheorem}
Here $\Pi$ denotes the fundamental groupoid, $B$ the nerve and $|-|$ the geometric realization.

We then conclude that ansular functors in a symmetric monoidal bicategory are equivalent to cyclic framed $E_2$-algebras (Theorem~\ref{thmansular}). This reduces the classification of ansular functors to the classification of cyclic framed $E_2$-algebras in a symmetric monoidal bicategory which was given in~\cite{cyclic}.

	\vspace*{0.2cm}\textsc{Acknowledgments.} We are grateful to Adrien Brochier, 	Maxime Ramzi,	
	Christoph Schweigert,
Nathalie Wahl and Simon Wood
for  helpful discussions related to this project.
We also thank the anonymous referees whose comments have greatly helped improve the manuscript.
LM gratefully acknowledges support by the Max Planck Institute for Mathematics in Bonn.
LW gratefully acknowledges support by 
the Danish National Research Foundation through the Copenhagen Centre for Geometry
and Topology (DNRF151)
and  by the European Research Council (ERC) under the European Union's Horizon 2020 research and innovation programme (grant agreement No.~772960).

\section{Ansular functors\label{secoperads}}
The purpose of this section is to recall the necessary definitions from the theory of cyclic and modular operads \cite{gk,gkmod} and their algebras to arrive  at a concise definition of the notion of an ansular functor.
We start by briefly summarizing Costello's description of cyclic and modular operads \cite{costello}:
We denote by $\Graphs$ the category whose objects are finite (possibly empty) disjoint unions of corollas (a corolla is a contractible graph with one vertex and a finite number $n\ge 0$ of legs attached to it). We denote the corolla with $n+1$ legs by $T_n$.
The morphisms of $\Graphs$ are defined as follows:
For a graph $\Gamma$, denote by $\nu(\Gamma)$ the graph obtained by cutting open all internal edges (this is a disjoint union of corollas) and by $\pi_0(\Gamma)$ the graph obtained by contracting all internal edges of $\Gamma$ (again, this is a disjoint union of corollas), see Figure~\ref{nupicap}.
Now for two disjoint unions of corollas $T$ and $T'$, a morphism $T\to T'$ in $\Graphs$ is an equivalence class of graphs $\Gamma$ together with identifications $\nu(\Gamma)\cong T$ and $\pi_0(\Gamma)\cong T'$ up to isomorphisms of graphs preserving the identifications. Therefore, cyclic symmetries are a special type of morphisms in $\Graphs$; in fact, $\Aut_\Graphs(T_n)$ is the permutation group of the set of legs, i.e.\ the permutation group on $n+1$ letters. We refer to \cite[Section~2]{cyclic} for a  more detailed presentation.

	\begin{figure}[h]
	\centering
	\scalebox{.99}{
\begingroup%
  \makeatletter%
  \providecommand\color[2][]{%
    \errmessage{(Inkscape) Color is used for the text in Inkscape, but the package 'color.sty' is not loaded}%
    \renewcommand\color[2][]{}%
  }%
  \providecommand\transparent[1]{%
    \errmessage{(Inkscape) Transparency is used (non-zero) for the text in Inkscape, but the package 'transparent.sty' is not loaded}%
    \renewcommand\transparent[1]{}%
  }%
  \providecommand\rotatebox[2]{#2}%
  \newcommand*\fsize{\dimexpr\f@size pt\relax}%
  \newcommand*\lineheight[1]{\fontsize{\fsize}{#1\fsize}\selectfont}%
  \ifx\svgwidth\undefined%
    \setlength{\unitlength}{248.22258297bp}%
    \ifx\svgscale\undefined%
      \relax%
    \else%
      \setlength{\unitlength}{\unitlength * \real{\svgscale}}%
    \fi%
  \else%
    \setlength{\unitlength}{\svgwidth}%
  \fi%
  \global\let\svgwidth\undefined%
  \global\let\svgscale\undefined%
  \makeatother%
  \begin{picture}(1,0.59215164)%
    \lineheight{1}%
    \setlength\tabcolsep{0pt}%
    \put(0,0){\includegraphics[width=\unitlength,page=1]{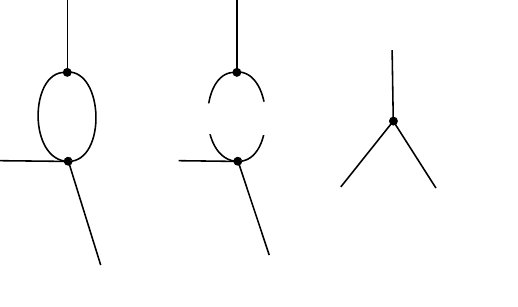}}%
    \put(0.4266936,0.0047368){\color[rgb]{0,0,0}\makebox(0,0)[lt]{\lineheight{1.25}\smash{\begin{tabular}[t]{l}$\nu(\Gamma)$\end{tabular}}}}%
    \put(0.72150457,0.00871183){\color[rgb]{0,0,0}\makebox(0,0)[lt]{\lineheight{1.25}\smash{\begin{tabular}[t]{l}$\pi_0(\Gamma)$\end{tabular}}}}%
    \put(0.12826911,0.00544923){\color[rgb]{0,0,0}\makebox(0,0)[lt]{\lineheight{1.25}\smash{\begin{tabular}[t]{l}$\Gamma$\end{tabular}}}}%
  \end{picture}%
\endgroup%
}
	\caption{On the definition of $\nu(\Gamma)$ and $\pi_0(\Gamma)$.}
	\label{nupicap}
\end{figure}

Under disjoint union, $\Graphs$ forms a symmetric monoidal category. 
One can now define a modular operad
 in any (higher) symmetric monoidal target category $\cat{S}$
as a symmetric monoidal functor (up to coherent homotopy) $\Graphs \to\cat{S}$.
While this offers a conceptually clear picture, using this definition at a technical level in the case that $\cat{S}$ is an arbitrary symmetric monoidal $\infty$-category can be intricate.
Luckily, the topological operads that we will be interested in are naturally obtained by taking the geometric realization of the nerve of a category-valued operad.
Therefore, we can concentrate on the case $\cat{S}=\Cat$ in which $\cat{S}$ is the symmetric monoidal bicategory of categories, functors and natural transformations. 
Let us record the definition in that case:
A \emph{modular operad} with values in the symmetric monoidal bicategory $\Cat$ of categories is
 a symmetric monoidal functor $\cat{O}:\Graphs \to \Cat$. Here $\Graphs$ is seen as symmetric monoidal bicategory with no non-identity 2-morphisms. As just mentioned, it is understood that the functor $\cat{O}$ does not need to have a strict functoriality and monoidality; instead, it is relaxed up to coherent homotopy, see \cite[Chapter~2]{schommerpries} for an overview of the coherence conditions for symmetric monoidal functors between symmetric monoidal bicategories. 
The symmetric monoidal category $\Graphs$ has a symmetric monoidal subcategory $\Forests$ with the same objects and forests as morphisms, i.e.\ disjoint unions of contractible graphs. A $\Cat$-valued \emph{cyclic operad}
is a symmetric monoidal functor $\cat{O}:\Forests \to \Cat$. Here we slightly deviate from the usual definition 
of the category $\Forests$ which would exclude corollas with no legs. This has the consequence that cyclic operads 
have `operations of arity $-1$ (total arity 0)'. This turns out to be useful later one and does not change the theory in an essential 
way. 

We should emphasize that in this description of \label{commentoperadicidentities}
cyclic and modular operads via graph categories, 
operadic identities (an operation $1_\cat{O}$ in total arity two 
that is neutral with respect to composition)
are a priori not included. 
These have to be included by hand \cite[Definition~2.3]{cyclic}, 
and for operads with values in a bicategory, such as $\Cat$, 
this involves several coherence conditions. 
In particular, an operadic identity is structure, 
not a property.
In this article, all (cyclic or modular) 
operads have by default an operadic identity.

The relevant operads for us will be the framed $E_2$-operad $\framed$
and the handlebody operad  $\Hbdy$
whose definition  we give now:
\begin{itemize}
	\item 
We define the \emph{(groupoid-valued) modular handlebody operad} as the symmetric monoidal functor 
$\Hbdy : \Graphs \to \Grpd$ sending a corolla $T$ with set of legs $\Legs(T)$
 to the groupoid $\Hbdy(T)$ whose objects are compact connected oriented three-dimensional
 handlebodies $H$ (we call them just  handlebodies for brevity) with $|\Legs(T)|$ many disks embedded in $\partial H$
  plus a parametrization of the disks, i.e.\ an orientation-preserving embedding $\psi : \sqcup_{\Legs(T)} \mathbb{D}^2 \to H$ (by this we mean an orientation-preserving diffeomorphism to the embedded disks). 
We define the morphisms in $\Hbdy(T)$ to be isotopy classes of orientation-preserving diffeomorphisms compatible with the  parametrizations. Therefore, the automorphism groups are 
the handlebody groups. 
The operadic composition in $\Hbdy$ is defined by the gluing of handlebodies along their boundaries.
More precisely, on the object level, the operadic composition glues handlebodies along a specified subset of the embedded disks; the parametrization of these disks is used to identify pairs of disks that we glue along. The definition on the object level is made such that mapping classes respecting the parametrizations can be glued as well, thereby affording the gluing operation on the morphism level.
A definition of handlebodies as a modular operad is also given in~\cite[Section~4.3]{giansiracusa}.
This definition essentially agrees with ours, but we should highlight that the definition in~\cite{giansiracusa} is essentially 1-categorical while we consider operads with values in bicategories, meaning that all sorts of coherence isomorphisms enter the picture. In addition, our conventions for operadic identities differ.

\item 
The \emph{(topological) framed $E_2$-operad} can be described by the spaces of embeddings of several disks into a bigger disks that are composed of scalings, translations and rotations; a detailed treatment can be found in \cite[Section~1.5]{wahlthesis} and \cite{salvatorewahl}. 
For us, however, it is more convenient to see $\framed$ as the categorical operad given by the genus zero part $\Hbdy_0$ of the handlebody operad. This also has the advantage that the cyclic structure is easier to understand. 
(The fact that this handlebody description of $\framed$ agrees with the traditional description of $\framed$ relies on the fact that $\framed$ can be seen as
cyclic operad of oriented genus zero surfaces and that the mapping class group of a genus zero handlebody agrees with the mapping class group of its boundary, see e.g.~\cite[Proposition~2.1]{hahe}.) 

In our conventions, $\framed$ has a contractible groupoid of arity $-1$ operations, namely the fundamental groupoid of the moduli space of closed surfaces of genus zero (this is, of course, just the sphere and its trivial mapping class group).
\end{itemize}
Next we briefly recall the definition of modular algebras over a modular operad $\cat{O}$ in $\Cat$:
For a symmetric monoidal bicategory $\cat{M}$ with unit $I$ and $X\in\cat{M}$, a non-degenerate symmetric pairing $\kappa :X\otimes X\to I$ is a 1-morphism that exhibits $X$ as its own dual object in $\cat{M}$ (i.e.\ there is a coevaluation $\Delta:I\to X\otimes X$ such that $\kappa$ and $\Delta$ fulfill the usual zigzag relations up to isomorphism) and that is a homotopy fixed point with respect to the $\mathbb{Z}_2$-action on $\cat{M}(X^{\otimes 2},I)$ coming from the symmetric braiding of $\cat{M}$. Now one can define the \emph{modular endomorphism operad of $(X,\kappa)$}. This is a modular operad $\End_\kappa^X : \Graphs \to \Cat$ sending a corolla $T$ with $n$ legs to $\cat{M}(X^{\otimes n},I)$. An $\cat{M}$-valued modular $\cat{O}$-algebra is an object $X\in\cat{M}$ with non-degenerate symmetric pairing $\kappa$ plus a symmetric monoidal transformation $A:\cat{O}\to \End_\kappa^X$ between symmetric monoidal functors between symmetric monoidal bicategories. 
In particular, $A$ consists of functors $A_T:\cat{O}(T)\to\cat{M}(X^{\otimes n},I)$, where $T$ is a corolla with $n$ legs,
and a natural isomorphism
\begin{equation}
\begin{tikzcd}
\mathcal{O}(T) \ar[dd,swap,"\mathcal{O}(\Gamma)"]  \ar[]{rr}{A_T } & &   \End_\kappa^X(T)  \ar{dd}{     \End_\kappa^X(\Gamma)  }  \ar[lldd, Rightarrow,"A_\Gamma"]  \\
& & \\
\mathcal{O}(T') \ar[swap]{rr}{A_{T'}} & &  \End_\kappa^X(T')
\end{tikzcd} 
\end{equation}
for any morphism $\Gamma:T\to T'$ in $\Graphs$. Since our operads have by default
operadic identities as explained on page~\pageref{commentoperadicidentities}, we additionally require all operadic algebras to be compatible with these operadic identities in the sense that
the map $A_{T_1}$ sends the operadic identity to the pairing
$\kappa \in \mathcal{M}(X^{\otimes 2}, I)$, which
serves as the operadic identity in the modular endomorphism operad. 
For operads with values in bicategories, it is implicit that this compatibility of
operadic algebras with the operadic identities is relaxed up to coherent isomorphism; it becomes structure. The details are presented in \cite[Definition~2.13]{cyclic}.
Cyclic algebras are defined analogously with $\Graphs$ replaced by $\Forests$, see \cite[Section~2.4]{cyclic}.

Bicategorical algebras over a cyclic or modular operad form a 2-groupoid~\cite[Proposition 2.18]{cyclic}. We briefly sketch the definition of 1-morphisms and 2-morphisms\label{refdefbicatalg2grpd}
 for algebras over a modular operad. The cyclic case is completely analogous;
 for more details, we refer to~\cite[Section~2.4]{cyclic}. A \emph{1-morphism} $F\colon A \to A' $ between algebras $A$ and $A'$
  over a modular operad $\cat{O}$ consists of a 1-morphism $F\colon X\to X'$ in $\cat{M}$ between the underlying objects of $A$ and $A'$, respectively, together with 
\begin{itemize}
	\item a 2-isomorphism 
	\begin{equation}\label{eqncoh1}
	\begin{tikzcd}
		X\otimes X \ar[rd, "\kappa", swap] \ar[rr, "F\otimes F"] & \ar[d,Rightarrow, "F_\kappa"] & X' \otimes X' \ ,  \ar[ld, "\kappa'"] \\ 
		 & I & 
	\end{tikzcd}
	\end{equation} 
\item and for all operations $o\in \cat{O}(T)$ a 2-isomorphism 
 \begin{equation}\label{eqncoh2}
\begin{tikzcd}
	X^{\otimes n} \ar[rd, "A_{o}", swap] \ar[rr, "F^{\otimes n}"] & \ar[d,Rightarrow, "F_o"] & {X'}^{\otimes n} \ .   \ar[ld, "A'_{o}"] \\ 
	& I & 
\end{tikzcd}
 \end{equation}
\end{itemize} 
These 2-isomorphisms
 are natural with respect to the morphisms in $\cat{O}(T)$, 
and compatible with the symmetric structure of the pairings and the operadic composition. For algebras over a operad with operadic identity,
 we require the 2-morphism $F_\kappa$ from~\eqref{eqncoh1}
  to be induced by $F_{1_\cat{O}}$
  from~\eqref{eqncoh2}, where $1_\cat{O}$ is the operadic identity.

A \emph{2-morphism} $\varphi\colon F\to F'$ between 1-morphisms $F,F'\colon A \to A' $ of algebras over a modular operad $\cat{O}$ 
(the 2-isomorphisms~\eqref{eqncoh1} and~\eqref{eqncoh2} are part of the data, but are suppressed from the notation)
is a 2-morphism $\varphi\colon F\longrightarrow F'$ in $\cat{M}$ such that 
 \begin{equation}
	\begin{tikzcd}
		& \ar[d, Rightarrow, "\varphi^{\otimes n}" near start, shorten >=2ex] & \\
		X^{\otimes n} \ar[rd, "A_{o}", swap] \ar[rr, "F^{'\otimes n}"] \ar[rr, "F^{\otimes n}", bend left=60] & \ \ar[d,Rightarrow, , "F_o'"] & {X'}^{\otimes n}  \ar[ld, "A'_{o}"] \\ 
		& I & 
	\end{tikzcd} 
= 
	\begin{tikzcd}
		& & \\
		X^{\otimes n} \ar[rd, "A_{o}", swap] \ar[rr, "F^{\otimes n}"] & \ar[d,Rightarrow, "F_o"] & {X'}^{\otimes n}  \ar[ld, "A'_{o}"] \\ 
		& I & 
	\end{tikzcd}
\end{equation}  
for every operation $o$ in $\cat{O}$.
  
We are now ready to state the definition of an ansular functor:

\begin{definition}\label{defansularfunctordef}
	An \emph{ansular functor}
	with values in a symmetric monoidal bicategory $\cat{M}$
	is an $\cat{M}$-valued modular algebra over the handlebody operad. 
\end{definition}

In this compact language, we may also state the main goal of this article: \emph{the classification of all ansular functors}.

\begin{remark}[Unpacking the definition]\label{rem: concrete description ansular functor}
	An ansular functor $A$ in $\Lexf$ consists of a 
	finite category $\cat{C}$ with non-degenerate pairing $\kappa:\cat{C}\boxtimes \cat{C}\to \fVect$ and for every handlebody $H$ with $n+1$
	disks embedded into its boundary a linear functor $A(H)\colon \cat{C}^{\boxtimes n+1}\to \fVect$ on which the handlebody group of $H$ acts via natural isomorphisms. For every collection of objects $X_0,\dots , X_n\in \cat{C}$,
	the evaluation of $A$ on $X_0\boxtimes \dots \boxtimes X_n$ gives a vector space 
	$A(H;X_0,\dots , X_n)\in \fVect$ which is equipped with a linear representation of the handlebody group of $H$. 
	These representations glue together via the coevaluation $\Delta : \fVect \to \cat{C}\boxtimes\cat{C}$
	for $\cat{C}$; this corresponds to a coend over the labels at the gluing boundary
	(for the details, see the excision result in \cite[Theorem~6.4]{cyclic}).
	 An ansular functor can be understood as the collection of all these handlebody  representations compatible with cutting and gluing. This is very similar in spirit to the notion of a modular functor; we expand on this in Remark~\ref{remmodular}.
	Using the non-degenerate pairing, the functors $A(H)$ gives rise to linear functors 
	\begin{align}
		\cat{C}^{\boxtimes i} \to \cat{C}^{\boxtimes j} \label{eqnAinputoutput}
	\end{align}    
	for any decomposition of the $n+1$ boundary disks into $i$ `incoming' and $j$ `outgoing' disks.
	The corresponding operation
	in $\Hbdy$ does not come with such a distinction, but if needed, one may always deliberately choose one.
	 We suggestively write $A(H\colon \sqcup_{i}\mathbb{D}^2\to \sqcup_{j}\mathbb{D}^2)$ for the functors~\eqref{eqnAinputoutput}.
\end{remark}

\begin{remark}[Relation to modular functors]\label{remmodular}
	As mentioned in the introduction, a modular functor \cite{Segal,ms89,turaev,tillmann,baki} is a consistent system of mapping class group representations.
	It can be formalized as follows, see \cite[Setion~7]{cyclic}: Consider the modular surface operad $\Surf$ that one obtains by replacing handlebodies with surfaces in the definition of $\Hbdy$; this modular operad was one of the motivations for the invention of modular operads in~\cite{gkmod}. 
	A modular functor can then be defined as a modular algebra over certain (central) extensions $\Surf^\catf{c} \to \Surf$ of the modular surface operad $\Surf$. The class of extensions that one allows is a choice, and different authors might make slightly different ones.
	 In any case, the extensions will be made such that the boundary map
	 $\partial : \Hbdy \to \Surf$, the map of modular operads that takes the boundary of a handlebody and converts the embedded disks in its boundary into boundary components of the boundary surface, lifts in a canonical way to a map $\Hbdy \to \Surf^\catf{c}$ of modular operads. Therefore, any modular functor gives, by restriction along the (lift of the) boundary map, rise to an ansular functor. 
	\end{remark}

\spaceplease
\section{The modular envelope and dihedral homology\label{modenvsec}}
In this section, we discuss one of the main tools for the classification of ansular functors, namely the modular envelope and its connection to dihedral homology. The results of this section are somewhat technical and require a little bit of basic homotopy theory. They will be needed in some of the proofs in the subsequent sections, but are dispensable for the understanding of the statement of the results in these sections. The reader who wants to take a shortcut may just skim through the notion of the modular envelope (until equation~\eqref{eqnenvenvint}), take note of the standard facts on diffeomorphism and mapping class groups in Remark~\ref{remdiffgroup} and move on to Section~\ref{secmodext}. One may then later come back to the results in this section once they are needed.

A cyclic operad can be `freely' completed to a modular operad, its so-called \emph{modular envelope} \cite{costello}.
More precisely,
the modular envelope $\Envint \cat{O}:\Graphs \to \Cat$ of a $\Cat$-valued cyclic operad $\cat{O}:\Forests\to\Cat$ is defined as the left Kan extension of $\cat{O}$ along the inclusion $\ell : \Forests \to \Graphs$.
The subscript `$\int$' is supposed to remind us 
 that this left Kan extension is performed in the homotopically correct way.
In the category-valued case, this means concretely that $(\Envint \cat{O})(T)$ for $T\in\Graphs$ is defined as the \emph{Grothendieck construction} 
\begin{align}
	(\Envint \cat{O})(T) = \int \left(\ell/T \to \Forests \ra{\cat{O}}\Cat \right) \ , \label{eqngc}
	\end{align}
i.e.\ as the category of pairs formed by an element $T' \ra{\Gamma} T$ in the slice category of $\ell$ over $T$ and some $o\in \cat{O}(T')$
(recall e.g.~from
\cite[Section~I.5]{maclanemoerdijk} that the Grothendieck construction of a functor $F:\cat{C}\to\Cat$ is the category $\int F$ that has as objects pairs $(c,x)$ formed by all objects $c\in \cat{C}$ together with an object $x\in F(c)$).
We will not call this construction \emph{derived} modular envelope because the word `derived' might create confusion in the category-valued case.
In fact, the Grothendieck construction can be understood as an oplax colimit.
By Thomason's Theorem \cite[Theorem~1.2]{thomason} it translates to a homotopy colimit in topological spaces after taking nerve (denoted by $B$) and geometric realization (denoted by $|-|$).
As a consequence, $|B\Envint \cat{O}|$ is really the `derived modular envelope'
$\mathbb{L}\Env|B\cat{O}|$ of the topological cyclic operad $|B\cat{O}|$ as it appears in \cite{costello,giansiracusa};
\begin{align}
	|B\Envint \cat{O}|\simeq
	\mathbb{L}\Env|B\cat{O}| \ . \label{eqnenvenvint}
	\end{align}
For a corolla $T$,
the category $\ell / T $ has the following concrete description: Its objects 
are connected graphs with an identification of their legs with $T$. 
Morphisms are given by collapsing of subtrees and symmetries of the 
graph. We denote this category by $\catf{Gr}_{\operatorname{conn}}(T)$. 
We 
denote by $\catf{Gr}_{\operatorname{conn}}^{\operatorname{a}}(T)$ its full subcategory
spanned by graphs containing no 
vertex of valence one. There is one exception:
If $T$ is the corolla with one leg, then we allow $T$ 
itself as object in $\catf{Gr}_{\operatorname{conn}}^{\operatorname{a}}(T)$.

\begin{lemma}\label{lemmaiota}
For any corolla $T$, the inclusion $\iota:\catf{Gr}_{\operatorname{conn}}^{\operatorname{a}}(T) \to \catf{Gr}_{\operatorname{conn}}(T)$ is homotopy final. 
\end{lemma}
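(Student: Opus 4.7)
My plan is to invoke Quillen's Theorem A: the inclusion $\iota$ is homotopy final provided that for every $\Gamma \in \catf{Gr}_{\operatorname{Con}}(T)$ the comma category $\Gamma \downarrow \iota$ has contractible nerve. I would establish this by exhibiting an \emph{initial} object in each such comma category, namely a canonically constructed ``amputated'' graph $\Gamma^{\operatorname{a}}$ together with a canonical morphism $\eta_\Gamma \colon \Gamma \to \Gamma^{\operatorname{a}}$.

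\textbf{Construction of the amputation.} The first step is to define $\Gamma^{\operatorname{a}}$ by iteratively collapsing the subtree consisting of any valence-one vertex together with its unique incident internal edge. Each elementary step is a morphism in $\catf{Gr}_{\operatorname{Con}}(T)$ and strictly decreases the vertex count, so the procedure terminates after finitely many steps. The resulting graph contains no valence-one vertex, with one exception: if $T = T_0$ is the one-leg corolla and amputation would eliminate the last remaining vertex, the procedure is stopped at $T_0$ itself, which is precisely the object retained by the exception in the definition of $\catf{Gr}_{\operatorname{Con}}^{\operatorname{a}}(T)$. A straightforward diamond argument on elementary amputations shows that the outcome $(\Gamma^{\operatorname{a}}, \eta_\Gamma)$ is independent of the order of collapses, up to canonical isomorphism.

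\textbf{Initiality.} The second step is to show that for any $(\Gamma', g \colon \Gamma \to \Gamma') \in \Gamma \downarrow \iota$ there exists a unique morphism $h \colon \Gamma^{\operatorname{a}} \to \Gamma'$ with $h \circ \eta_\Gamma = g$. The key observation is that whenever $v$ is a valence-one vertex of $\Gamma$ with unique incident edge $e$, the morphism $g$ must collapse a subtree containing $e$; otherwise the image of $v$ would still have valence one in $\Gamma'$, contradicting $\Gamma' \in \catf{Gr}_{\operatorname{Con}}^{\operatorname{a}}(T)$. Applying this observation inductively to all valence-one vertices arising during the amputation shows that each subtree collapsed by $\eta_\Gamma$ is contained in a subtree collapsed by $g$, so $g$ descends to a unique $h$ with the required property. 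This exhibits $(\Gamma^{\operatorname{a}}, \eta_\Gamma)$ as initial and completes the argument.

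\textbf{Main obstacle.} The most delicate piece is the combinatorics surrounding the iterated collapse: one has to verify confluence of elementary amputations (so $\Gamma^{\operatorname{a}}$ is really well-defined), compatibility with graph symmetries, and the \emph{uniqueness} (not merely weak initiality) of the factorization, which requires carefully tracking how the collapsing data of $g$ restricts to the part of $\Gamma$ left untouched by $\eta_\Gamma$. Dovetailing the exceptional case $T = T_0$ with the termination of the amputation algorithm is an additional subtlety that has to be handled separately.
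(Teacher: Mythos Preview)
Your proposal is correct and follows essentially the same approach as the paper: construct the amputated graph $\Gamma^{\operatorname{a}}$ by iteratively collapsing valence-one vertices and show that the resulting morphism $\Gamma \to \Gamma^{\operatorname{a}}$ is initial in the slice $\Gamma/\iota$. The paper's proof is considerably terser (it simply asserts initiality without spelling out the confluence or factorization arguments you outline), so your added care about well-definedness, uniqueness of the factorization, and the exceptional case $T=T_0$ is welcome rather than superfluous.
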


Recall that a functor $F:\cat{C}\to\cat{D}$ is called \emph{homotopy final} if the realization $|B(d/F)|$ of the nerve of the slice of $F$ under any $d\in \cat{D}$ is contractible. We are using here the terminology of \cite[Section~8.5]{riehl}. 
 
\begin{proof}
For a connected graph $\Gamma\in \catf{Gr}_{\operatorname{conn}}(T)$,
we need to show that the slice $\Gamma / \iota $ is contractible. To this end,
 let $\Gamma^{\operatorname{a}}$ be the graph 
constructed from $\Gamma$ by collapsing
 all vertices with one leg until we 
get an element of $\catf{Gr}_{\operatorname{conn}}^{\operatorname{a}}(T)$. The collapsing procedure describes a morphism 
$\Omega_\Gamma \colon \Gamma \to \Gamma^{\operatorname{a}} \in \catf{Gr}_{\operatorname{conn}}(T)$, see Figure~\ref{fig:1} for a sketch.
 We can now observe that $\Omega_\Gamma$ is an initial object in $\Gamma / \iota $.
 This implies
that $|B(\Gamma / \iota) |$
is contractible.    
\end{proof}

\begin{figure}[h]
	\begin{center}
		\begin{overpic}[
			scale=0.4]{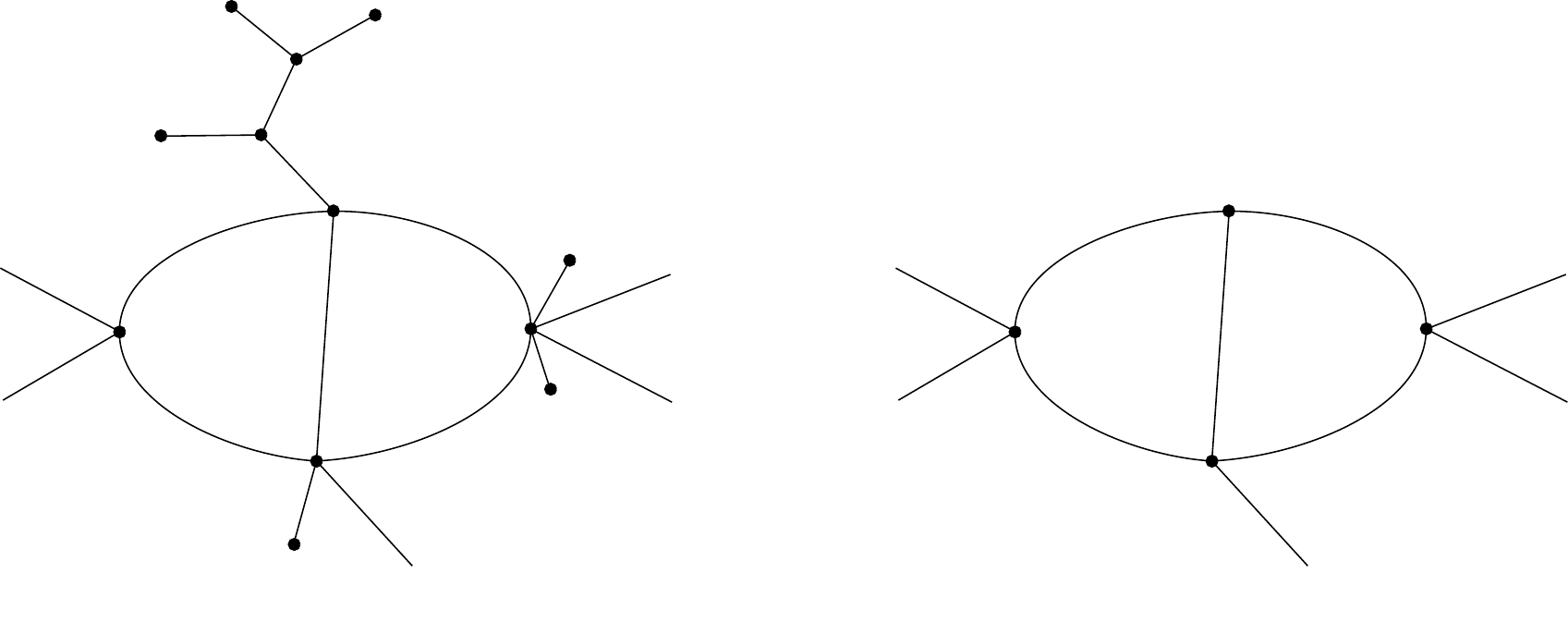}
			\put(-8,18){$\Omega_\Gamma \colon $}
			\put(48,18){$\longrightarrow$} 
			\put(18,-2){$\Gamma$}
			\put(78,-2){$\Gamma^\text{a}$}
		\end{overpic}
	\end{center}
	\caption{Sketch for the map $\Omega_\Gamma$.}
	\label{fig:1}
\end{figure}

This has the consequence that arity zero operations can essentially be
 ignored for the computation of the modular envelope:

\begin{proposition}\label{Prop:no0vertices}
Let $\mathcal{O}$ be a category-valued cyclic operad. For all corollas $T$,
 the inclusion
 $\iota:\catf{Gr}_{\operatorname{conn}}^{\operatorname{a}}(T) \to \catf{Gr}_{\operatorname{conn}}(T)$
 induces
  a homotopy equivalence
\begin{align}
\left|B \int \left( \catf{Gr}_{\operatorname{conn}}^{\operatorname{a}}(T) \to \Forests \ra{\cat{O}} \Cat \right) \right| \ra{\simeq}
|B \Envint \cat{O} | (T) \ .         \label{eqnmapgrothendieckconstructions}
 \end{align}
\end{proposition}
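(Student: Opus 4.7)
The plan is to reduce the statement to a combination of two standard homotopy-theoretic principles applied to the already established homotopy finality of $\iota$ (Lemma~\ref{lemmaiota}). Concretely, I would proceed as follows.

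First, I would invoke Thomason's theorem \cite[Theorem~1.2]{thomason} to reinterpret both Grothendieck constructions as homotopy colimits in topological spaces. Writing $F:\catf{Gr}_{\operatorname{Con}}(T)\to\Cat$ for the functor obtained by composing $\ell/T=\catf{Gr}_{\operatorname{Con}}(T)\to\Forests$ with $\cat{O}$, this gives the natural equivalences
\[
|B\Envint\cat{O}|(T)\;\simeq\;\hocolimsub{\catf{Gr}_{\operatorname{Con}}(T)}|BF|
\quad\text{and}\quad
\left|B\!\int\!\big(\catf{Gr}_{\operatorname{Con}}^{\operatorname{a}}(T)\xrightarrow{\iota}\catf{Gr}_{\operatorname{Con}}(T)\xrightarrow{F}\Cat\big)\right|
\;\simeq\;\hocolimsub{\catf{Gr}_{\operatorname{Con}}^{\operatorname{a}}(T)}|BF\circ\iota|.
\]
Naturality of Thomason's equivalence ensures that under these identifications, the map in \eqref{eqnmapgrothendieckconstructions} induced by $\iota$ corresponds to the canonical comparison map between the two homotopy colimits.

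Next, I would apply the standard cofinality principle for homotopy colimits (see e.g.\ \cite{riehl}): if $\iota:\cat{C}'\to\cat{C}$ is homotopy final, then for any diagram $G:\cat{C}\to\Top$ the canonical map $\hocolim_{\cat{C}'}G\circ\iota\to\hocolim_{\cat{C}}G$ is a weak equivalence. This applies with $G=|BF|$ because the homotopy finality of $\iota$ is precisely the content of Lemma~\ref{lemmaiota}. Combining this with the identifications above yields the desired homotopy equivalence.

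I do not expect any serious obstacle: both ingredients (Thomason's theorem and the cofinality principle) are standard, and the only nontrivial input, namely the homotopy finality of $\iota$, has already been established in Lemma~\ref{lemmaiota}. The only point deserving slight care is checking that the comparison map realized on Grothendieck constructions is the one arising from the cofinality theorem, but this follows from the naturality of Thomason's equivalence in the indexing category.
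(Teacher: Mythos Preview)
Your proposal is correct and follows essentially the same route as the paper: apply Thomason's theorem to identify both sides with homotopy colimits in spaces, then use the homotopy finality of $\iota$ from Lemma~\ref{lemmaiota} to conclude that the induced comparison map is an equivalence. The paper's proof is slightly terser but uses exactly these two ingredients in the same order.
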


\begin{proof}
	We obtain the map~\eqref{eqnmapgrothendieckconstructions} by
	applying $|B-|$
	(nerve and geometric realization)
	 to the functor \begin{align} \int \left( \catf{Gr}_{\operatorname{conn}}^{\operatorname{a}}(T) \to \Forests \ra{\cat{O}} \Cat \right) \to \Envint \cat{O}(T)\end{align} induced by  $\iota:\catf{Gr}_{\operatorname{conn}}^{\operatorname{a}}(T) \to \catf{Gr}_{\operatorname{conn}}(T)$.
	Thanks to Thomason's Theorem~\cite[Theorem~1.2]{thomason}, this can equivalently be described as the map
	\begin{align} &\hocolim \left(\catf{Gr}_{\operatorname{conn}}^{\operatorname{a}}(T) \to \Forests \ra{\cat{O}}\Cat \ra{| B -|}\Top \right) \\ \to\quad  
	&	\hocolim \left(\catf{Gr}_{\operatorname{conn}}(T) \to \Forests \ra{\cat{O}}\Cat \ra{|B -|}\Top \right) 
		\end{align}
	induced by $\iota:\catf{Gr}_{\operatorname{conn}}^{\operatorname{a}}(T) \to \catf{Gr}_{\operatorname{conn}}(T)$.
	This map is an equivalence 
	since $\iota$ is homotopy final
	by Lemma~\ref{lemmaiota}.
	\end{proof}

		For any corolla $T$ and a non-negative integer $g$, denote by $\catf{Gr}_{\operatorname{conn}}^{\operatorname{a},g}(T) \subset \catf{Gr}_{\operatorname{conn}}^{\operatorname{a}}(T)$ the full subcategory spanned by graphs whose first Betti number is $g$.
	Then $\catf{Gr}_{\operatorname{conn}}^{\operatorname{a},g}(T)$ is connected and
	\begin{align}
	\catf{Gr}_{\operatorname{conn}}^{\operatorname{a}}(T) = \bigsqcup_{g\ge 0} \catf{Gr}_{\operatorname{conn}}^{\operatorname{a},g}(T) \ . 
	\end{align}
	As an immediate consequence, we find
	\begin{align}
	|B \Envint \cat{O}| (T) &\simeq \bigsqcup_{g \ge 0} |B \Envintg \cat{O}|(T)  \label{eqndecompgenus} \\
	 \text{with}\quad \Envintg \cat{O}(T) &:= \int \left(  \catf{Gr}_{\operatorname{conn}}^{\operatorname{a},g}(T) \subset  \catf{Gr}_{\operatorname{conn}}(T) \to \Forests \ra{\cat{O}} \Cat  \right) \ .   
	\end{align}
	We call $\Envintg \cat{O}(T)$
	the
	\emph{genus $g$ contribution}
	to 	$\Envint \cat{O}(T)$. In the sequel, it will be necessary to understand the genus one contribution in detail. For this reason,
	 the category $\catf{Gr}_{\operatorname{conn}}^{\operatorname{a},1}(\bullet)$ will be of particular importance.
	 Here we use the additional notation $\bullet$ for the corolla with no legs. 
	 
	The category $\catf{Gr}_{\operatorname{conn}}^{\operatorname{a},1}(\bullet)$ can be described using 
	\emph{Connes' cyclic category $\Lambda$} \cite{connes}
	whose definition we briefly recall:
	The objects of $\Lambda$ are the natural numbers $\mathbb{N}_0$. The object corresponding to $n\ge 0$ will be denoted by $[n]$.
	One defines a morphism $f:[n]\to [m]$ as an equivalence class of set-theoretic maps $f: \mathbb{Z}\to\mathbb{Z}$ subject to the condition $f(i+n+1)=f(i)+m+1$. The equivalence relation identifies $f$ and $g$ if $f-g$ is a constant multiple of $m+1$. 
	The simplex category $\Delta$ sits inside $\Lambda$ as a subcategory.
	The usual description of $\Delta$ in terms of face maps $\delta_i\colon [n-1] \to [n]$ for $ 0\leq i \leq n $ and degeneracy maps $\sigma_j\colon
	[n+1] \to [n] $ for $ 0\leq j \leq n$ can be extended to a description of $\Lambda$ by including cyclic permutations $\tau_n \colon 
	[n] \to [n]$ with $\tau_n^{n+1}=\id_{[n]}$. In addition to the usual simplicial relations
	between the face and degeneracy maps, there are compatibility relations with the cyclic permutations:
\begin{align}
\tau_n \delta_i &= \delta_{i-1} \tau_{n-1}\ ,  \quad  1\leq i \leq n \\
\tau_n \delta_0 &= \delta_n\ ,  \\ 
\tau_n \sigma_i &= \sigma_{i-1} \tau_{n-1} \ , \quad  1\leq i \leq n \\ 
\tau_n \sigma_0 &= \sigma_n \tau_{n+1}^2 \  .
\end{align}
By omitting the degeneracy maps we obtain a subcategory 
 $\vec\Lambda \subset \Lambda$. The following statement is most likely well-known. We give a proof in lack of a reference. 

\begin{lemma}\label{lemmaLambdavec}	The inclusion $\vec\Lambda \subset \Lambda$ is homotopy initial, thereby making the inclusion $\vec\Lambda^\opp \subset \Lambda^\opp$ 	homotopy final.	
\end{lemma}	
\begin{proof}	For the simplex category $\Delta$, consider the inclusion $\vec\Delta \subset \Delta$ of the subcategory without the degeneracy maps. Since $\vec\Delta \subset \Delta$ is well-known to be homotopy initial, see e.g.\ \cite[Example~8.5.12]{riehl},	it suffices to prove that the canonical inclusion	
\begin{align}	
I_{[n]}:	\vec\Delta / [n] \to \vec\Lambda / [n]	
\end{align}	
induces a homotopy equivalence	
\begin{align}|B(\vec\Delta / [n])| \ra{\simeq} |B(\vec\Lambda / [n])| \ .\label{eqnhomotopyequivalence}	
\end{align}	In fact, any map $f : [m]\to [n]$ in $\Lambda$ uniquely factors as \begin{align}\label{eqnfactorization} 
[m] \ra{z(f)} [m] \ra{f_\Delta} [n] \ , 
\end{align} 
where $z(f)$ is an automorphism in $\Lambda$	(and therefore in $\vec\Lambda$) and $f_\Delta : [m]\to [n]$ lies in $\Delta$ (we see here $\Delta$ as subcategory of $\Lambda$).	 We now consider the slice category $f / I_{[n]}$: Objects in this category are factorizations $[m] \ra{h} [\ell] \ra{g} [n]$ of $f$, where $g$ is in $\Delta$ and $h$ in $\vec\Lambda$. The factorization~\eqref{eqnfactorization} is initial in $f / I_{[n]}$ proving that $|B(f / I_{[n]})|$ is contractible. Now $I_{[n]}$ is homotopy final and therefore \eqref{eqnhomotopyequivalence} a homotopy equivalence by Quillen's Theorem~A. \end{proof}

The cyclic category can be extended to the \emph{dihedral category} \cite{loday,spalinski}.
We use the description from \cite[Section~2]{mwdiff}:
The group $\mathbb{Z}_2$ acts on $\Lambda$ by an involution 
$r: \Lambda \to \Lambda$ that we call the \emph{reversal functor}.
It is the identity on objects and sends any morphism $f:[n]\to[m]$  in $\Lambda$ represented by a map $f:\mathbb{Z}\to\mathbb{Z}$ to the morphism $r(f)$ represented by $p \mapsto m-f(n-p)$.
The $\mathbb{Z}_2$-action gives us a functor $*\DS \Z_2 \to \Cat$ sending $*$ to $\Lambda$ and the generator of $\mathbb{Z}_2$ to the reversal functor.
We can obtain the dihedral category as the Grothendieck construction of this functor. We denote it by $\Lambda \rtimes \mathbb{Z}_2$.
Similarly, one can define $\vec\Lambda \rtimes \mathbb{Z}_2$ and $\vec\Lambda^\opp \rtimes\mathbb{Z}_2= (\vec\Lambda \rtimes \mathbb{Z}_2)^\opp$.

We may now make the following elementary observation:

	\begin{lemma}\label{Lemconnectedgraphs}
	There is a canonical equivalence of categories
	$\catf{Gr}_{\operatorname{conn}}^{\operatorname{a},1}(\bullet) \simeq \vec\Lambda^\opp \rtimes \mathbb{Z}_2$. 
\end{lemma}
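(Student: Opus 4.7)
My plan is to identify both categories explicitly as skeletal pieces and then exhibit a concrete equivalence functor between them. First, I would classify the objects of $\catf{Gr}_{\operatorname{Con}}^{\operatorname{a},1}(\bullet)$. An object is a connected graph with no external legs, no valence-one vertex, and first Betti number equal to one. The relation $|V|-|E|+1 = 1$ gives $|E|=|V|$, while $\mathrm{val}(v)\geq 2$ at every vertex forces $\sum_v \mathrm{val}(v) = 2|E| \geq 2|V|$. Combining these forces every vertex to have valence exactly two, so each object is a cycle graph $C_{n+1}$ with $n+1$ vertices for some $n \geq 0$ (the case $n=0$ being a single vertex with a self-loop).

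Next, I would unpack morphisms on the graph side. By the description of $\catf{Gr}_{\operatorname{Con}}(\bullet)$ recalled in the excerpt, a morphism from $C_{n+1}$ to $C_{m+1}$ (for $m \leq n$) is a disjoint subtree-contraction combined with a graph symmetry. Subtrees of a cycle are arcs (connected subpaths), and contracting a family of disjoint arcs totalling $n-m$ edges yields a cycle with $m+1$ vertices and $m+1$ edges, still of genus one. Thus such a morphism is specified by a partition of the $n+1$ cyclically-ordered vertices of $C_{n+1}$ into $m+1$ non-empty cyclically-consecutive blocks, together with an identification of the resulting cycle with $C_{m+1}$ up to its dihedral automorphism group $D_{m+1} = \mathbb{Z}_{m+1}\rtimes\mathbb{Z}_2$.

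On the other side, by the explicit description in the excerpt, a morphism $[n]\to[m]$ in $\vec\Lambda^\opp$ is represented by a strictly increasing $g\colon\mathbb{Z}\to\mathbb{Z}$ with $g(j+m+1)=g(j)+n+1$, modulo integer multiples of $n+1$. This datum is equivalent to a choice of $m+1$ cyclically-ordered distinct positions in $\mathbb{Z}/(n+1)$ with a distinguished basepoint — equivalently, a partition of $\mathbb{Z}/(n+1)$ into $m+1$ non-empty cyclically-consecutive arcs labelled by $\{0,\dots,m\}$ cyclically. The semidirect factor $\mathbb{Z}_2$ adjoins a cyclic orientation reversal. I would then define the equivalence functor $F\colon\vec\Lambda^\opp\rtimes\mathbb{Z}_2\to\catf{Gr}_{\operatorname{Con}}^{\operatorname{a},1}(\bullet)$ by sending $[n]$ to $C_{n+1}$ with its canonical cyclic vertex labelling, each morphism of $\vec\Lambda^\opp$ to the corresponding arc-contraction, and the $\mathbb{Z}_2$-generator to orientation reversal of the cycle.

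Finally, I would verify that $F$ is an equivalence. Essential surjectivity is immediate from the classification of objects. For full faithfulness, the automorphism group of $[n]$ on both sides is $D_{n+1}$, and for general $(n,m)$ the two descriptions of the morphism set above are patently in bijection (a direct count gives $2(m+1)\binom{n+1}{m+1}$ on each side). The step that I expect to need the most care is checking that composition in $\vec\Lambda^\opp$ (stacking cyclic surjections) really does correspond to iterated arc-contraction of cycles, with the semidirect-product structure encoding the interaction between rotations and reflections; particular attention is needed for the degenerate case $n=0$, where the loop graph carries a non-trivial $\mathbb{Z}_2$-automorphism swapping the two half-edges of the loop, and for the unary case where the paper conventionally keeps the corolla $T$ itself in $\catf{Gr}_{\operatorname{Con}}^{\operatorname{a}}(T)$.
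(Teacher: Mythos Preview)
Your proposal is correct and follows essentially the same approach as the paper: both construct the equivalence by sending $[n]$ to the cycle with $n+1$ labelled bivalent vertices, the face maps to edge collapses, the cyclic generator to rotation, and the $\mathbb{Z}_2$-generator to reflection; essential surjectivity comes from the observation that every object is a cycle, and full faithfulness from matching morphisms. The only cosmetic difference is that the paper argues full faithfulness via the unique factorisation of every morphism as a subtree collapse followed by an automorphism (available on both sides), whereas you give a direct bijection of morphism sets backed by a count --- these amount to the same thing. Your aside about ``the unary case where the paper conventionally keeps the corolla $T$ itself'' is not relevant here, since that exception concerns $T_0$ rather than $\bullet$.
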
 

\begin{proof}
	Consider the functor $ \Psi \colon \vec\Lambda^\opp \rtimes 
	\mathbb{Z}_2 \to 
	\catf{Gr}_{\operatorname{conn}}^{\operatorname{a},1}(\bullet) $ 
	sending $[n]$ to the graph $\Psi([n])$ with $n+1$ bivalent vertices
	labeled by $0,\dots , n$ arranged on a circle, the generator $\tau_n$ to the cyclic permutation of 
	the vertices of $\Psi([n])$, $\delta_i$ for $0\le i\le n$ to the morphism which  
	collapses 
	the edge between vertex $i$ and $i+1 \mod (n+1)$, and $-1\in \Z_2$ to 
	the automorphism of $\Psi([n])$ sending vertex $i$ to the vertex $n+1-i \mod 
	(n+1) $.

	The functor $\Psi$ is clearly essentially surjective.
	To see that it is fully faithful, note that every morphism $\Psi([n])\to \Psi([m])$ in 
	$\catf{Gr}_{\operatorname{conn}}^{\operatorname{a},1}(\bullet)$ can be
	written uniquely as the composition of a subtree collapse (which are always given by compositions of images of $\delta_i$'s) with an automorphism 
	of $\Psi([m])$. The same type of factorization holds for the morphisms of $\vec\Lambda^\opp \rtimes \mathbb{Z}_2$ (for $\vec \Lambda$, this is a standard fact, and it clearly carries over to the semidihedral category 
	$\vec\Lambda^\opp \rtimes \mathbb{Z}_2$).
	It follows from comparing these factorizations and fact that both categories have the same automorphism groups, namely $\Z_{n+1}\rtimes \Z_2$ for $n\ge 0$, that $\Phi$ is fully faithful.    
\end{proof}

The dihedral category allows us to define Loday's dihedral homology. Let us briefly recall the definition:
Let $A$ be an associative algebra in a symmetric monoidal $\infty$-category $\cat{S}$ (for us, spaces or chain complexes are sufficient). 
By means of the multiplication and the unit of $A$, we can build the following simplicial object in $\cat{S}$:
\begin{equation}
\begin{tikzcd}
\dots \ar[r, shift left=6]  \ar[r, shift left=2]
\ar[r, shift right=6]  \ar[r, shift right=2]
& 	\displaystyle A^{\otimes 3}
\ar[l, shift left=4]  \ar[l]
\ar[l, shift right=4]  
\ar[r, shift left=4] \ar[r, shift right=4] \ar[r] & \displaystyle A^{\otimes 2} \ar[r, shift left=2] \ar[r, shift right=2]
\ar[l, shift left=2] \ar[l, shift right=2]
& \displaystyle A\ ,  \ar[l] \\
\end{tikzcd}\label{eqnhochschildgenus1}
\end{equation}
This is sometimes called the Hochschild object. If $\cat{S}$ is cocomplete, its colimit over $\Delta^\opp$ is defined as the \emph{Hochschild homology of $A$}
(it is standard to use the word `homology' here even if we are not necessarily working in chain complexes).  
As observed by Connes and Tsygan \cite{connes,tsygan}, 
\eqref{eqnhochschildgenus1}~extends to a functor $\Lambda^\opp \to \cat{S}$, i.e.\ to a \emph{cyclic object}.
Taking the homotopy colimit over $\Lambda^\opp$ gives us the \emph{cyclic homology} of $A$. The full $\infty$-construction was given by Nikolaus-Scholze \cite[Appendix~B]{ns}.

Now assume that $A$ comes with an involution through anti algebra maps.
In this case, \eqref{eqnhochschildgenus1} extends to a functor \begin{align}\label{eqndihedralobject0}
	\Lambda^\opp \rtimes \mathbb{Z}_2 \to \cat{S} \ , 
	\end{align} i.e.\ a \emph{dihedral object}, and its homotopy colimit is defined as the \emph{dihedral homology of $A$}; it was introduced by Loday in~\cite{loday}. We use the notation $\DH(A)$ for the dihedral homology of $A$. 
Since Loday did not construct this object in the setting of $\infty$-categories along the lines of
\cite[Appendix~B]{ns}, let us briefly comment on how the $\infty$-categorical dihedral object~\eqref{eqndihedralobject0} can be constructed:
 Denote by $\operatorname{Env}(\As)$ the symmetric monoidal category with finite sets as objects and morphisms being  maps together with a linear ordering on all preimages. This is the envelope of the associative operad, and hence symmetric monoidal functors out of $\operatorname{Env}(\As)$ are exactly associative algebras, see e.g.~\cite[Section~2.1]{afh}. There is a $\Z_2$-action on $\operatorname{Env}(\As)$ reversing the ordering on all the fibers. An algebra with an anti involution in an $\infty$-category $\cat{S}$ can now be defined as a $\Z_2$-equivariant symmetric monoidal functor $A\colon \operatorname{Env}(\As) \to \cat{S}$. The usual map $\Lambda^{\opp}\to \operatorname{Env}(\As)$ is $\Z_2$-equivariant. The Grothendieck construction $\Lambda^\opp \rtimes \mathbb{Z}_2$ is a oplax colimit and hence the $\Z_2$-equivariant functor $\Lambda^\opp\to \cat{S}$ induces the dihedral object $\Lambda^\opp \rtimes \mathbb{Z}_2\to \cat{S}$.

	\begin{definition}
		For every groupoid-valued cyclic operad $\cat{O}$,
		we denote by $\algo$ the \emph{algebra $\cat{O}(T_1)$ of unary operations}. The multiplication is the operadic composition.
		We agree to see $\algo$ as algebra in spaces.
		\end{definition}
	
	\begin{remark}\label{remdihhom}
		The cyclic symmetry of the cyclic operad $\cat{O}$ endows $\algo$ with a $\mathbb{Z}_2$-action by anti algebra maps. For this reason, its dihedral homology $\DH(\algo)$ can be considered.
		Since the inclusion $\vec \Lambda^\opp \rtimes \mathbb{Z}_2\subset \Lambda^\opp \rtimes \mathbb{Z}_2$ is homotopy final (this follows from the fact that $\vec\Lambda^\opp \subset \Lambda$ is homotopy final, see Lemma~\ref{lemmaLambdavec}), we may compute $\DH(\algo)$ as 
		\begin{align}
		\DH(\algo) = \hocolimsub{[n] \in \vec\Lambda^\opp\rtimes \Z_2}\algo^{\times (n+1)} \ . 
		\end{align}
		\end{remark}

	\begin{theorem}\label{thmgenusonegeneral}
		Let $\mathcal{O}\colon \Forests \to \Cat$ be a cyclic operad.
		Then 
		there is a canonical homotopy equivalence
		\begin{align} 	|B\Envintone 
		\cat{O}(\bullet)| \simeq \DH(\algo)=\hocolimsub{[n] \in \vec\Lambda^\opp\rtimes \Z_2} \algo^{\times (n+1)} \ , \end{align} i.e.\ the genus one contribution
		$|B\Envintone 
		\cat{O}(\bullet)|$
		to the arity $-1$ operations of the modular envelope
		 is homotopy equivalent to the dihedral homology of the algebra of unary operations. 
	\end{theorem}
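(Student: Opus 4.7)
The strategy is to combine Proposition~\ref{Prop:no0vertices}, the genus decomposition~\eqref{eqndecompgenus}, Lemma~\ref{Lemconnectedgraphs}, and Thomason's theorem to identify $|B\Envintone\cat{O}(\bullet)|$ with the homotopy colimit appearing in Remark~\ref{remdihhom}, and then to invoke that remark.

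First, Proposition~\ref{Prop:no0vertices} combined with the genus decomposition~\eqref{eqndecompgenus} writes $|B\Envintone\cat{O}(\bullet)|$ as the realization of the nerve of the Grothendieck construction of $\cat{O}$ over the indexing category $\catf{Gr}_{\operatorname{Con}}^{\operatorname{a},1}(\bullet)$. Thomason's theorem then converts this into the homotopy colimit
\[
\hocolim\bigl(\catf{Gr}_{\operatorname{Con}}^{\operatorname{a},1}(\bullet) \to \Forests \ra{\cat{O}} \Cat \ra{|B-|} \Top\bigr).
\]
I then reindex along the equivalence $\Psi\colon \vec\Lambda^\opp \rtimes \Z_2 \ra{\simeq} \catf{Gr}_{\operatorname{Con}}^{\operatorname{a},1}(\bullet)$ from Lemma~\ref{Lemconnectedgraphs}. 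Since $\Psi([n])$ is the cycle with $n+1$ bivalent vertices, cutting its internal edges produces $n+1$ disjoint copies of $T_1$, and the symmetric monoidality of $\cat{O}$ gives a canonical equivalence $\cat{O}(\nu(\Psi([n]))) \simeq \algo^{\times(n+1)}$.

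The heart of the argument is then to verify that the resulting diagram $\vec\Lambda^\opp\rtimes\Z_2 \to \Top$ agrees with the dihedral object underlying the algebra $\algo$, equipped with its $\Z_2$-action by anti-algebra maps induced by the cyclic structure of $\cat{O}$. Concretely I must check: (i) the face map $\delta_i$ in $\vec\Lambda^\opp$ corresponds under $\Psi$ to collapsing the edge between adjacent vertices $v_i$ and $v_{i+1}$ of $\Psi([n])$, and $\cat{O}$ sends this collapse to the operadic composition $\algo \times \algo \to \algo$, i.e.\ the multiplication of $\algo$; (ii) the cyclic generator $\tau_n$ corresponds to the cyclic permutation of the vertices of $\Psi([n])$, which under $\cat{O}$ becomes the cyclic shift of tensor factors; (iii) the generator of $\Z_2$ acts by orientation reversal of the cycle and is identified with the anti-algebra involution on $\algo$ built from the cyclic $\Z_2$-symmetry of $\cat{O}(T_1)$. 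Granting these identifications, Remark~\ref{remdihhom} yields
\[
|B\Envintone\cat{O}(\bullet)| \simeq \hocolimsub{[n]\in\vec\Lambda^\opp\rtimes\Z_2} \algo^{\times(n+1)} = \DH(\algo).
\]

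The main obstacle I expect is making the dihedral identification in (iii) precise together with the coherent compatibility of (i)--(iii). Since $\cat{O}$ is only a symmetric monoidal pseudofunctor, one has to track its coherence data through every subtree collapse and every automorphism of $\Psi([n])$ and verify that the resulting 2-cells satisfy the simplicial, cyclic, and dihedral relations in the correct way, so that the diagram genuinely extends to a functor out of $\vec\Lambda^\opp\rtimes\Z_2$ rather than just being defined object-wise.
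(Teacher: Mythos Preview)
Your proposal is correct and follows essentially the same route as the paper: restrict Proposition~\ref{Prop:no0vertices} to the genus one component, pass to the homotopy colimit over $\catf{Gr}_{\operatorname{Con}}^{\operatorname{a},1}(\bullet)$, reindex along the equivalence $\Psi$ of Lemma~\ref{Lemconnectedgraphs}, and identify the resulting diagram with the semidihedral object computing $\DH(\algo)$ via Remark~\ref{remdihhom}. The paper's proof is in fact terser than yours; it simply asserts that after precomposing with $\Psi$ one obtains the diagram $[n]\mapsto \algo^{\times(n+1)}$ and that its homotopy colimit is by definition the dihedral homology, without spelling out the verifications (i)--(iii) or the coherence issue you flag in your final paragraph.
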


	\begin{proof}
		Restriction of Proposition~\ref{Prop:no0vertices} to the `genus one part'
		(i.e.\ the component for $g=1$ in~\eqref{eqndecompgenus})
		 gives us a homotopy equivalence \begin{align} |B\Envintone 
		\cat{O}(\bullet)| \simeq \hocolim \left(   \catf{Gr}_{\operatorname{conn}}^{\operatorname{a},1}(\bullet)  \subset  \catf{Gr}_{\operatorname{conn}}^{\operatorname{a}}(\bullet)  \to \Forests \ra{|B\cat{O}|} \Top\right) \ . 
		\end{align}
		If we precompose the diagram $\catf{Gr}_{\operatorname{conn}}^{\operatorname{a},1}(\bullet)\to \Top$ on the right hand side with the equivalence $\Psi :  \vec\Lambda^\opp \rtimes 
		\mathbb{Z}_2 \to 
		\catf{Gr}_{\operatorname{conn}}^{\operatorname{a},1}(\bullet)$ from Lemma~\ref{Lemconnectedgraphs}, the 
		homotopy colimit remains the same up to equivalence, and we obtain the diagram $\vec\Lambda^\opp\rtimes \Z_2\to \Top$ sending $[n]$ to $\algo^{n+1}$. 
		By definition the homotopy colimit of this 
		diagram is the dihedral homology of $\algo$ (Remark~\ref{remdihhom}).
		This implies the assertion.
		\end{proof}
	
In the case that the cyclic operad $\cat{O}$
is the framed $E_2$-operad $\framed$,
 we obtain the following:

\begin{theorem}\label{thmGenus1envelope}
	There is a homotopy equivalence of topological spaces
	\begin{align}
		|B\Envintone 
		\framed(\bullet)|     \simeq   B \Diff (H_{1,0}) \ \ ,
	\end{align}
	where $\Diff(H_{1,0})$ is the topological group of diffeomorphisms of the solid closed torus $H_{1,0}=\mathbb{S}^1\times\mathbb{D}^2$. 
\end{theorem}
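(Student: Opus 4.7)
The strategy is to apply the general result Theorem~\ref{thmgenusonegeneral} to the cyclic operad $\framed$ and identify the resulting dihedral homology with $B\Diff(H_{1,0})$ using the computations from \cite{mwdiff}. More concretely, I would proceed in three steps, the first two being essentially bookkeeping and the last being where the real content sits.

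First, I would apply Theorem~\ref{thmgenusonegeneral} to the cyclic operad $\framed$ to obtain a homotopy equivalence
\begin{align}
|B\Envintone \framed(\bullet)| \simeq \DH(\mathfrak{A}_{\framed}) = \hocolimsub{[n]\in\vec\Lambda^\opp\rtimes\Z_2} \mathfrak{A}_{\framed}^{\times(n+1)} \ .
\end{align}
This reduces the task to a computation of dihedral homology of a single topological algebra, rather than a homotopy colimit over the complicated category of graphs.

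Second, I would identify the algebra of unary operations $\mathfrak{A}_{\framed}$ up to equivalence. By the definition of $\framed$ as the genus zero part of $\Hbdy$, the space $|B\framed(T_1)|$ is the classifying space of the mapping class groupoid of a 3-ball with two parametrized disks in its boundary; this mapping class group is $\Z$ (generated by rotating one disk past the other around the axis connecting them), and the operadic composition corresponds to addition. Hence $\mathfrak{A}_{\framed}$ is equivalent, as a topological algebra with $\Z_2$-action by anti-algebra maps (the cyclic involution acts by inversion), to the topological group $SO(2)\simeq S^1$ with its standard orientation-reversing involution.

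Third -- and this is the main obstacle -- I would invoke the computation of dihedral homology from \cite{mwdiff} to obtain $\DH(S^1)\simeq B\Diff(H_{1,0})$. The geometric idea is that a dihedral object built from $S^1$ should assemble into a model for the classifying space of orientation-preserving diffeomorphisms of $S^1\times\mathbb{D}^2$, where the $S^1$-factor encodes rotation of the meridian disk and the $\Z_2$-dihedral structure accounts for the involution flipping the longitudinal circle. Concretely, one recognizes the homotopy colimit over $\vec\Lambda^\opp\rtimes\Z_2$ of $(S^1)^{\times(n+1)}$ as a standard dihedral model whose realization is computed in \cite{mwdiff} and matches the known homotopy type of $B\Diff(S^1\times\mathbb{D}^2)$. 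Chaining these equivalences together yields the claim.
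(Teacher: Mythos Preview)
Your overall strategy---apply Theorem~\ref{thmgenusonegeneral}, identify $\mathfrak{A}_{\framed}$ together with its $\Z_2$-action, and then invoke \cite{mwdiff}---is exactly the route the paper takes. There is, however, a genuine slip in your second step: the cyclic $\Z_2$-involution on $\mathfrak{A}_{\framed}\simeq\mathbb{S}^1$ is \emph{trivial}, not inversion. Concretely, the generator of $\pi_0\,\framed(T_1)\cong\Z$ is the Dehn twist along the core curve of the two-holed sphere, and an orientation-preserving diffeomorphism exchanging the two boundary components (compatibly with the boundary parametrizations) conjugates this Dehn twist to itself, not to its inverse. On the algebraic side this is the reason a cyclic $\framed$-algebra satisfies $\kappa(\theta\otimes\id_A)=\kappa(\id_A\otimes\theta)$ in Definition~\ref{defselfdual} (equivalently $D\theta_X=\theta_{DX}$ in the Grothendieck--Verdier formulation) rather than a relation involving $\theta^{-1}$.

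This error is not merely cosmetic. The result you need, \cite[Theorem~2.2]{mwdiff}, computes the dihedral homology of $\mathbb{S}^1$ equipped with the \emph{trivial} anti-involution and identifies it with $B\Diff(H_{1,0})$; with the inversion anti-involution one obtains a different dihedral object, and that reference does not apply as stated. Your informal geometric picture in step three is likewise built on the wrong involution. Once the $\Z_2$-action is corrected to be trivial, your argument coincides verbatim with the paper's proof.
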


\begin{proof}
By definition of $\framed$, we have $\mathfrak{A}_{\framed}=\mathbb{S}^1$ with trivial $\mathbb{Z}_2$-action on $\mathbb{S}^1$.
 Therefore, Theorem~\ref{thmgenusonegeneral} implies that
 $	|B\Envintone 
 \framed(\bullet)| $ is the dihedral homology of the topological algebra $\mathbb{S}^1$ with trivial $\mathbb{Z}_2$-action.
 This dihedral homology was proven to be
 homotopy equivalent to $B \Diff (H_{1,0})$ in \cite[Theorem~2.2]{mwdiff}.
\end{proof}

\begin{remark}\label{remdiffgroup}
It is well-known that $\Diff (H_{1,0})\simeq \mathbb{T}^2 \rtimes (\mathbb{Z}\times \mathbb{Z}_2)$, where $\Map(H_{1,0})\cong \mathbb{Z}\times \mathbb{Z}_2$ is the mapping class group of the solid closed torus. This is a consequence of results in \cite{gramain,hatcher,Wajnryb}; a 
 detailed recollection is given in \cite[Section~2]{mwdiff}.
A generator for the $\mathbb{Z}$-factor of $\Map(H_{1,0})$ is the 
Dehn twist $T$ along any properly embedded disk in $H_{1,0}$; a generator for the $\mathbb{Z}_2$-factor
is the rotation by $\pi$ around any axis in the plane in which $H_{1,0}$ lies.
For later use, let us just recall that we can see $\Map(H_{1,0})$ as a subgroup of the mapping class group $\Map(\mathbb{T}^2)\cong \text{SL}(2,\mathbb{Z})$ and hence as a $2\times 2$-matrix.
Under the inclusion $\Map(H_{1,0})\subset \Map(\mathbb{T}^2)\cong \text{SL}(2,\mathbb{Z})$, the 
generators $T$ and $R$ are mapped as follows:
\begin{align}
\label{eqnTandRmatrix}	T \mapsto \begin{pmatrix} 1 & 0 \\ 1 & 1 \end{pmatrix}\ , \quad R \mapsto \begin{pmatrix} -1 & \phantom{-}0\\\phantom{-}0&-1 \end{pmatrix} \ . \end{align}
\end{remark}

\section{The modular extension Theorem for cyclic algebras\label{secmodext}}
For the proof of the main result of this article, we need to
 understand modular algebras over the (derived) modular envelope of a cyclic operad $\cat{O}$.
Since the modular envelope is designed to be the left adjoint to the forgetful functor from modular to cyclic operads, one should expect that
the modular algebras over the modular envelope of $\cat{O}$ are just cyclic $\cat{O}$-algebras. The details behind this, however, are quite subtle because they depend on the category in which the modular envelope is computed: in topological spaces via homotopy colimits, or in categories via the Grothendieck construction. 
Indeed,
modular algebras over the  categorical
modular envelope 
 $\Envint \cat{O}$ (built using the Grothendieck construction)
  are \emph{not} equivalent to cyclic $\cat{O}$-algebras. 
Luckily, a slightly modified statement turns out to be true. Presenting this statement is the purpose of this section.

When passing from a category-valued operad to an operad valued in spaces,
 we apply the functor $|B-|$ to the categories of operations. To pass back to categories, we apply the functor $\Pi$ sending a space to its fundamental groupoid. For a general category $\cat{C}$, the groupoid $\Pi |B\cat{C}|$ is the localization of $\cat{C}$ at all its morphisms, i.e.\ it is the initial category with a map from $\cat{C}$ to it which inverts all morphisms. This localization is characterized by the universal property that specifying a functor out of it is equivalent to specifying a functor out of $\cat{C}$ which sends all morphisms of $\cat{C}$ to isomorphisms. In the following, we will use this universal property of $\Pi |B\cat{C}|$. There are smaller models for this localization (see e.g.~\cite[Chapter~2.1]{riehl}).

For a symmetric monoidal bicategory $\cat{M}$
and a category-valued cyclic operad $\cat{O}$,
let $A$ be a cyclic algebra, i.e.\ a symmetric monoidal 
transformation $A:\cat{O}\to\End_\kappa^X$.
Moreover, let $T\in\Graphs$ and $(\widetilde T,\Gamma)\in \ell /T$, where $\ell:\Forests\to\Graphs$
is the inclusion.
By \cite[Proposition~7.1]{cyclic} the functor
\begin{align}
\cat{O}(\widetilde T) \ra{A_{\widetilde T}}\End_\kappa^X(\widetilde T) \ra{\End_\kappa^X(\Gamma)} \End_\kappa^X(T)
\end{align}
induces a functor $(\Envint \cat{O})(T)\to \End_\kappa^X(T)$. This yields on $(X,\kappa)$ the structure of a modular $\Envint \cat{O}$-algebra. It is called the \emph{modular extension} of $A$ and
denoted by $\widehat{A}$. If $\cat{O}$ is groupoid-valued, then the functors
$(\Envint \cat{O})(T)\to \End_\kappa^X(T)$ that $\widehat{A}$ consists of send all morphisms to isomorphisms    (although $\Envint\cat{O}$ is generally \emph{not} groupoid-valued).
Therefore, $\widehat{A}$ can be seen as a modular algebra over the modular operad $\Pi |B \Envint\cat{O}|$ (the localization of $\Envint \cat{O}$ at all morphisms as explained above). 
It is a straightforward consequence of the definition of 1-morphisms and 2-morphisms of cyclic and modular algebras (page~\pageref{refdefbicatalg2grpd})
that 1-morphisms and 2-morphisms between cyclic $\cat{O}$-algebras extend to 1-morphisms and 2-morphisms between the respective modular extensions.
In other words,
the modular extension
yields a functor of 2-groupoids
\begin{align}
\widehat{-} : \CycAlg (\cat{O})\to\ModAlg(\Pi |B \Envint\cat{O}|) \ . 
\end{align}
The fact that cyclic and modular algebras with values in a symmetric monoidal bicategory form 2-groupoids is shown in~\cite[Proposition~2.16]{cyclic}.
A modular algebra $B:\Pi |B \Envint\cat{O}|\to \End_\kappa^X$ over $\Pi |B \Envint\cat{O}|$
is a modular $\Envint \cat{O}$-algebra that sends all morphisms in the categories of operations to isomorphisms.
Via the functors
\begin{align}
\cat{O}(T)\to(\Envint \cat{O})(T) \ra{B_T}\End_\kappa^X(T) \ , 
\end{align}
we obtain a cyclic $\cat{O}$-algebra $RB:\cat{O}\to\End_\kappa^X$ that we call the \emph{restriction} of $B$.
The definition of 1-morphisms and 2-morphisms of cyclic and modular algebras entails once again that this assignment is in fact functorial, i.e.\ we obtain a functor
\begin{align}
R:\ModAlg(\Pi |B \Envint\cat{O}|)\to \CycAlg (\cat{O}  )
\end{align}
between 2-groupoids.

\begin{remark}
	If $\cat{O}$ admits an operadic identity $1_{\cat{O}} \in \cat{O}(T_1)$, then the modular envelope $\Envint\cat{O}$ also admits an operadic identity given by $(\id_{T_1},1_{\cat{O}})\in \Envint\mathcal{O}(T_1)$.
	Furthermore, this is also an operadic identity for $\Pi |B\Envint\cat{O}|$. When evaluating the modular extension
	$\widehat{A}$ of a cyclic $\mathcal{O}$-algebra $A$ on the operadic identity $(\id_{T_1},1_{\cat{O}})$, we find 
	\begin{align}
	\widehat{A}_{T_1}(\id_{T_1},1_{\cat{O}}) = \End^X_\kappa (\id_{T_1})(A_{T_1}(1_{\cat{O}})) \cong 
	A_{T_1}(1_{\cat{O}}) \cong \kappa \ , 
	\end{align} 
	where the last isomorphism comes from the condition that $A$ is compatible with the operadic identity.
	Conversely, if $B$ is a modular $\Pi |BU\cat{O}|$-algebra compatible with the identity by virtue of an isomorphism $B_{T_1}(\id_{T_1},1_{\cat{O}})\cong \kappa $, we find for the restriction  $RB$
	\begin{align}
	RB_{T_1}(1_{\cat{O}})= B_{T_1}(\id_{T_1},1_{\cat{O}})\cong \kappa \  \ .
	\end{align} 
	In summary, the modular extension and restriction
	defined above are compatible with our conventions regarding operadic identities.
\end{remark}

\begin{theorem}\label{thmositionmodext}
	Let $\cat{O}$ be a groupoid-valued cyclic operad and $\cat{M}$ a symmetric monoidal bicategory.
	The modular extension of cyclic algebras and the  restriction afford mutually weakly inverse equivalences between the 2-groupoids of
	cyclic $\cat{O}$-algebras in $\cat{M}$
	and  modular $\Pi |B \Envint\cat{O}|$-algebras in $\cat{M}$, i.e.\
	\begin{equation}
	\begin{tikzcd}
	\CycAlg(\cat{O}) \ar[rrrr, shift left=2,"\text{modular extension}"] &&\simeq&& \ar[llll, shift left=2,"\text{restriction}"] \ModAlg( \Pi |B \Envint\cat{O}|) \ . 
	\end{tikzcd}
	\end{equation}
	
\end{theorem}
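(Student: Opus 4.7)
The plan is to construct the modular extension $\widehat{-}$ and the restriction $R$ explicitly and then verify they are mutually weakly inverse equivalences of 2-groupoids. The 1-categorical part of the extension is provided by \cite[Proposition~7.1]{cyclic}, which produces, for every cyclic $\cat{O}$-algebra $A$, a modular $\Envint\cat{O}$-algebra structure whose value on $(S\xrightarrow{\Gamma} T, x)\in \Envint\cat{O}(T)$ is $\End_\kappa^X(\Gamma)(A_S(x))$. The first step specific to the present statement is to argue that when $\cat{O}$ is groupoid-valued, the functors $\widehat{A}_T\colon \Envint\cat{O}(T)\to \End_\kappa^X(T)$ send every morphism to an isomorphism and therefore descend to the localization $\Pi|B\Envint\cat{O}|(T)$. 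A generic morphism in the Grothendieck construction takes the form $(f,\alpha)\colon (S\xrightarrow{\Gamma} T, x)\to (S'\xrightarrow{\Gamma'} T, x')$ with $f\colon S\to S'$ in $\Forests$ satisfying $\Gamma'\circ \ell(f)=\Gamma$ and $\alpha\colon \cat{O}(f)(x)\to x'$; the second component is automatically invertible because $\cat{O}(S')$ is a groupoid, and the image under $\widehat{A}_T$ is assembled from the cyclic coherence 2-cell of $A$ associated with $f$ followed by $\End_\kappa^X(\Gamma')$ applied to $A_{S'}(\alpha)$, all of which are invertible.

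With the target refined to $\Pi|B\Envint\cat{O}|$, the equivalence $R\widehat{A}\simeq A$ is close to tautological: it reduces to inspecting the composite on the subcategory of identity graphs $(\id_T, x)$ for corollas $T\in\Forests$, where $\widehat{A}_T(\id_T, x)=\End_\kappa^X(\id_T)(A_T(x))\cong A_T(x)$ by unitality of the modular endomorphism operad. The harder direction is the natural equivalence $B\simeq \widehat{RB}$. For this, observe that the modular structure on $B\colon \Pi|B\Envint\cat{O}|\to \End_\kappa^X$ provides, for every morphism $\Gamma\colon S\to T$ in $\Graphs$, a coherent natural isomorphism between $B_T\circ \Envint\cat{O}(\Gamma)$ and $\End_\kappa^X(\Gamma)\circ B_S$. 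Since $\Envint\cat{O}(\Gamma)$ maps $(\id_S, x)\in \Envint\cat{O}(S)$ to $(\Gamma, x)\in \Envint\cat{O}(T)$, this produces a canonical isomorphism
\begin{align}
B_T(\Gamma, x)\;\cong\;\End_\kappa^X(\Gamma)\bigl(B_S(\id_S, x)\bigr)\;=\;\End_\kappa^X(\Gamma)\bigl((RB)_S(x)\bigr)\;=\;\widehat{RB}_T(\Gamma, x),
\end{align}
which one then promotes to a natural equivalence of modular algebras.

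The main obstacle is the bookkeeping of 2-categorical coherence data to establish that these constructions assemble into an equivalence of 2-groupoids. Concretely, I expect the non-trivial items to be: (i) verifying that the isomorphism $B\cong \widehat{RB}$ built above is compatible with compositions $\Gamma''\circ \Gamma'$ of morphisms in $\Graphs$, so that it defines a modification between symmetric monoidal transformations; this will use the coherent associativity built into the modular structure on $B$. (ii) Checking compatibility of both equivalences with the symmetric monoidal transformations (disjoint union of corollas) and, when applicable, with operadic units. The conceptual input behind (i) is that every object of $\Envint\cat{O}(T)$ lies in the image of $\Envint\cat{O}(\Gamma)$ applied to the subcategory $\cat{O}(S)\subset \Envint\cat{O}(S)$, so that after localization of source 2-cells the modular envelope genuinely realizes the left adjoint to the cyclic-to-modular forgetful functor in the correct 2-categorical sense --- which is exactly the content the theorem records. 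Once the coherence bookkeeping is complete, symmetry of the construction immediately gives weak inverses in both directions.
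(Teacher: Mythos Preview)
Your proposal is correct and follows essentially the same approach as the paper: the paper's proof also splits into verifying $R\widehat{A}\simeq A$ by unwinding the composite on morphisms $\Gamma\colon T\to T'$ in $\Forests$ (which you correctly flag as nearly tautological), and verifying $\widehat{RB}\simeq B$ via the exact coherence isomorphism $\End_\kappa^X(\Omega)\circ B_{\widetilde T}\cong B_T\circ (\Envint\cat{O})(\Omega)$ that you wrote down, followed by checking compatibility with $\Gamma\colon T\to T'$ in $\Graphs$. The only cosmetic difference is that the paper handles the coherence bookkeeping you list under (i) by drawing a single pasting diagram rather than naming it as a separate step.
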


\begin{proof}
	\begin{pnum}
		
		\item Let $A:\cat{O}\to\End_\kappa^X$ be a cyclic $\cat{O}$-algebra and $\Gamma:T\to T'$ a morphism in $\Forests$. 
		By definition the evaluation of $R\widehat{A}$ on $\Gamma$ is the natural isomorphism
		\begin{equation}\label{eqnthisdiagramhere}\begin{tikzcd}
		\mathcal{O}(T) \ar[dd,swap,"\mathcal{O}(\Gamma)"]  \ar[]{rr}{o \mapsto (\id_T,o) } & & (\Envint\cat{O})(T) \ar[dd,"(\Envint\cat{O})(\Gamma)"] \ar[lldd, Rightarrow]   \ar[]{rr}{\widehat A_T }&&  \End_\kappa^X(T)  \ar{dd}{     \End_\kappa^X(\Gamma)  }  \ar[lldd, Rightarrow,"\widehat A_\Gamma"]  \\
		& & \\
		\mathcal{O}(T') \ar[swap]{rr}{o \mapsto (\id_{T'},o)} & &   (\Envint\cat{O})(T') \ar[]{rr}{\widehat A_{T'} } & &  \End_\kappa^X(T') \ , 
		\end{tikzcd}  
		\end{equation}
		where the first square is filled by the natural transformation which at $o\in \cat{O}(T)$
		is given by the morphism $(\Gamma,o)\to (\id_{T'},   \cat{O}(\Gamma)o)$ in $(\Envint\cat{O})(T')$ induced by $\Gamma$. Note that $\widehat{A}_T$ and $\widehat{A}_{T'}$ send all morphisms to isomorphisms, so the composition of the two natural transformations is really a natural isomorphism.
		From the definition of $\widehat{A}$, it follows that the upper and lower horizontal composition in~\eqref{eqnthisdiagramhere}
		agrees with $A_T$ and $A_{T'}$, respectively.
		Moreover, the total natural isomorphism evaluated at $o\in \cat{O}(T)$ is given by the evaluation of $\widehat{A}_{T'}$ on the morphism
		$(\Gamma,o)\to (\id_{T'},   \cat{O}(\Gamma)o)$ in $(\Envint\cat{O})(T')$. 
		This evaluation is
		$\widehat{A}_{T'} (\Gamma,o)=\End_\kappa^X(\Gamma)A_o \to A_{\cat{O}(\Gamma)o}=\widehat{A}_{T'} (\id_{T'} ,    \cat{O}(\Gamma)o)$, i.e.\ the $o$-component of the natural isomorphism
		\begin{equation}
		\begin{tikzcd}
		\mathcal{O}(T) \ar[dd,swap,"\mathcal{O}(\Gamma)"]  \ar[]{rr}{A_T } & &   \End_\kappa^X(T)  \ar{dd}{     \End_\kappa^X(\Gamma)  }  \ar[lldd, Rightarrow,"A_\Gamma"]  \\
		& & \\
		\mathcal{O}(T') \ar[swap]{rr}{A_{T'}} & &  \End_\kappa^X(T') \ . 
		\end{tikzcd} 
		\end{equation}
		Hence, we may canonically identify $R\widehat{A}\simeq A$. 
		This is clearly natural in $A$ such that we have $R\, \widehat{-} \simeq \id_{\CycAlg(\cat{O})}$ as functors between 2-groupoids. This is a consequence of the fact that $\widehat{-}$ maps 1-morphisms and 2-morphisms to the same underlying 1-morphism or 2-morphism, respectively, in $\cat{M}$ and only extends the structure 2-isomorphisms. 
		
		\item
		Let $B$ be a modular $\Pi |B \Envint\cat{O}|$-algebra.
		We can describe $B$ equivalently as a modular $\Envint \cat{O}$-algebra
		$B:\Envint \cat{O}\to \End_\kappa^X$ sending all morphisms in the categories of operations to isomorphisms.
		Let $T\in\Graphs$ and $(\widetilde T,\Omega)\in \ell /T$ be given.
		The functor
		$
		\widehat{RB}_T :\Envint\cat{O}(T)\to\End_\kappa^X(T)
		$
		is determined by the fact that after precomposition with the functor $\cat{O}(\widetilde T)$ sending $o\in\cat{O}(\widetilde T)$ to $(\widetilde T,\Omega,o)\in(\Envint \cat{O})(T)$, it is given by
		\begin{align}
		\cat{O}(\widetilde T) \ra{(RB)_{\widetilde T}} \End_\kappa^X(\widetilde T) \ra{\End_\kappa^X(\Omega)} \End_\kappa^X(T) \ . \label{eqnwidehatR1}
		\end{align}
		By definition of $RB$ this functor agrees with the functor
		\begin{align}
		\cat{O}(\widetilde T) \ra{o \mapsto (\id_{\widetilde T} , o)} (\Envint \cat{O})(\widetilde T) \ra{B_{\widetilde T}} \End_\kappa^X(\widetilde T) \ra{\End_\kappa^X(\Omega)} \End_\kappa^X(T) \ . 
		\end{align}
		Via the natural isomorphism $\End_\kappa^X(\Omega)\circ B_{\widetilde T}\cong B_T \circ (\Envint \cat{O})(\Omega)$ that is part of the structure of $B$, 
		the functor~\eqref{eqnwidehatR1}
		is naturally isomorphic to
		\begin{align}
		\cat{O}(\widetilde T) \ra{o \mapsto (\id_{\widetilde T} , o)} (\Envint \cat{O})(\widetilde T) \ra{(\Envint \cat{O})(\Omega)}    (\Envint\cat{O})(T) \ra{B_T} \End_\kappa^X(T) \ . 
		\end{align}
		This gives us canonical natural isomorphisms $\widehat{RB}_T\stackrel{\alpha_T}{\cong} B_T$. 
		For any morphism $\Gamma:T\to T'$ in $\Graphs$,
		the natural isomorphism of functors
		\begin{equation}
		\begin{tikzcd}
		(\Envint	\mathcal{O})(T) \ar[dd,swap,"(\Envint \mathcal{O})(\Gamma)"]  \ar[rr, bend left =90, ""{name=S}, "B_T"]  \ar[""{name=U}]{rr}{\widehat{RB}_T }     \ar[from=S, to=U, shorten <=8, shorten >=8, Rightarrow,"\alpha_T"] & &   \End_\kappa^X(T)  \ar{dd}{     \End_\kappa^X(\Gamma)  }  \ar[lldd, Rightarrow,"\widehat{RB}_\Gamma"]  \\
		& & \\
		(\Envint	\mathcal{O})(T') \ar[swap,""{name=W}]{rr}{\widehat{RB}_{T'}} \ar[rr, bend right =90, ""{name=Q}, "B_{T'}"]        \ar[from=W, to=Q, shorten <=8, shorten >=8, Rightarrow,"\alpha_{T'}"] & &  \End_\kappa^X(T') \ . 
		\end{tikzcd} 
		\end{equation}
		agrees with $B_\Gamma$, i.e.\ we can canonically identify $\widehat{RB}\simeq B$. 
		Again, this is natural in $B$ such that $\widehat{R-}\simeq \id_{\ModAlg(\Envint \cat{O})}$ as functors between 2-groupoids. 
		
	\end{pnum}
\end{proof}

\section{The classification of ansular functors\label{secansular}}
In this section, we combine the Theorems~\ref{thmGenus1envelope} and~\ref{thmositionmodext}
with the main result of \cite{giansiracusa} to classify ansular functors.
Let us first state the main result of \cite{giansiracusa} in a slightly different form.

\begin{theorem}[unital version of $\text{\cite[Theorem~A]{giansiracusa}}$]\label{thmgengiansiracusa}
	There is a canonical map $\omega \colon \Envint \framed \to \Hbdy$ of category-valued modular operads 
	such that the induced map $\mathbb{L}\Env \framed \to |B \Hbdy|$ of topological modular operads is a bijection on $\pi_0$ and 
	 a homotopy equivalence on all path components except the one for the solid closed torus.
	\end{theorem}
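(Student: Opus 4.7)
The plan is to construct $\omega$ from the universal property of the left Kan extension and then to reduce the homotopical claim to Giansiracusa's original theorem. Since $\framed$ is defined as the genus zero part $\Hbdy_0$ of the handlebody operad, the tautological inclusion of cyclic operads $\framed \hookrightarrow \ell^* \Hbdy$, where $\ell : \Forests \to \Graphs$, yields by the restriction / left Kan extension adjunction a map of $\Cat$-valued modular operads $\omega : \Envint \framed \to \Hbdy$. Concretely, on an object $(T' \xrightarrow{\Gamma} T, o)$ of the Grothendieck construction~\eqref{eqngc} — where $\Gamma$ is a graph with contraction $T$ and $o \in \framed(T')$ is a disjoint union of genus zero handlebody operations — the functor $\omega$ produces the handlebody obtained by gluing the components of $o$ along the internal edges of $\Gamma$; compatibility with morphisms in $\ell/T$ amounts to invariance of this gluing under subtree collapses.

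Applying $|B{-}|$ and using the identification~\eqref{eqnenvenvint} turns $\omega$ into the topological map $\mathbb{L}\Env \framed \to \Hbdy$ whose behavior we wish to analyze. Giansiracusa's original Theorem~A already provides a bijection on $\pi_0$ together with a homotopy equivalence on all components except those corresponding to the solid closed torus $H_{1,0}$, the closed ball $H_{0,0}$, and the ball with one embedded disk. The torus component is excluded from our statement as well, so the remaining work is to verify the equivalence on the two unit-related components that Giansiracusa omits but our unital conventions retain.

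Both of these missing components live at genus zero, and at genus zero the derived modular envelope is no larger than the cyclic operad. More precisely, for any corolla $T$, the subcategory of $\catf{Gr}_{\operatorname{Con}}^{\operatorname{a},0}(T)$ spanned by the single-vertex corolla $T$ is homotopy initial, because every genus zero connected graph collapses canonically to a point. An argument parallel to Lemma~\ref{lemmaiota} and Proposition~\ref{Prop:no0vertices} then yields $\Envintz \framed(T) \simeq \framed(T)$, so $\omega$ restricts at genus zero to the identification $\framed \simeq \Hbdy_0$, a trivial homotopy equivalence on the two missing components (both are contractible groupoids, since the mapping class groups of the closed genus zero ball, of the ball with one embedded disk and of the corresponding closed genus zero surfaces are all trivial). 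The main obstacle is not any single homotopy calculation but rather the bookkeeping required to reconcile Giansiracusa's conventions for framed $E_2$ (which lack arity $-1$ and arity $0$ components) with our unital conventions; once this dictionary is set up, Giansiracusa's result handles all higher components while the trivial genus zero calculation above handles the unit data, and the two assemble into the full statement.
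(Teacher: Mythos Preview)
Your overall plan coincides with the paper's: construct $\omega$ from the inclusion $\framed=\Hbdy_0\hookrightarrow\Hbdy$, invoke Giansiracusa for the generic components, and deal with the two ball components by hand. But there is a genuine gap in how you bridge to Giansiracusa's statement.

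Giansiracusa's Theorem~A is \emph{not} a statement about your map $\omega$; it is a statement about the restricted map $\omega^{\operatorname{a}}$ living over the subcategory $\catf{Gr}_{\operatorname{Con}}^{\operatorname{a}}(T)$ of graphs \emph{without valence-one vertices} (this is what ``non-unital'' means here). The passage from the non-unital to the unital envelope does not merely add the two arity~$-1$/arity~$0$ components: it enlarges the indexing category $\ell/T\simeq\catf{Gr}_{\operatorname{Con}}(T)$ for \emph{every} corolla $T$ and every genus, because valence-one vertices can be sprinkled into any graph. So when you write ``Giansiracusa's original Theorem~A already provides \dots\ a homotopy equivalence on all components except \dots'', you are asserting something about $|B\Envint\framed|$ which Giansiracusa never proved. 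The paper closes this gap by invoking Proposition~\ref{Prop:no0vertices} for \emph{every} remaining $(T,g)$: it shows that $|B\Envint\framed|(T)\simeq\hocolim_{\catf{Gr}_{\operatorname{Con}}^{\operatorname{a}}(T)}\framed$, and then observes that for $(T,g)\notin\{(\bullet,0),(T_0,0),(\bullet,1)\}$ the category $\catf{Gr}_{\operatorname{Con}}^{\operatorname{a},g}(T)$ contains no valence-one vertices anyway, so $\omega$ there literally coincides with $\omega^{\operatorname{a}}$. You use Proposition~\ref{Prop:no0vertices} only at genus zero and then wave at ``bookkeeping'' for the rest; that bookkeeping \emph{is} the content of the reduction, and you should make it explicit.

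Two smaller points. First, in your genus-zero argument the single-vertex corolla $T$ is \emph{terminal} in $\catf{Gr}_{\operatorname{Con}}^{\operatorname{a},0}(T)$ (every tree collapses \emph{to} it), so the inclusion $\{T\}\hookrightarrow\catf{Gr}_{\operatorname{Con}}^{\operatorname{a},0}(T)$ is homotopy \emph{final}, not homotopy initial; this is what one needs for the homotopy colimit to reduce. Second, the paper's treatment of the two ball components is simpler than yours: rather than identifying $\Envintz\framed(T)\simeq\framed(T)$ in general, it just notes that both the source and target components are contractible, which is immediate.
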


\begin{proof} 
	The proof amounts to a rather technical combination of the results of~\cite{giansiracusa} with our different treatment of operations of arity $-1$ and zero. Let us give the details:
	Recall that we can describe $\framed$ as the genus zero restriction $\Hbdy_0$ of $\Hbdy$.
	For a corolla $T$,
an object of $\Envint \Hbdy_0 (T)$ is a 
connected graph $\Gamma$ with an identification of its legs with those of $T$ and elements
$H^{(j)}\in \Hbdy_0(T^{(j)}) $ for all corollas $T^{(j)}$ arising from $\Gamma$ by 
cutting open all internal edges. 
The functor $\omega \colon \Envint \Hbdy_0 (T) \to  \Hbdy (T)$ is defined by sending this 
data to the handlebody constructed by gluing together 
the genus zero handlebodies 
$H^{(j)}$ according to the graph $\Gamma$. 

Giansiracusa considers the 
restriction $\omega^\text{a}$ of this functor to the full subcategory of objects
whose underlying graph does not contain any vertices of valence one (the reason for this is that in \cite{giansiracusa} a non-unital version of the
framed $E_2$-operad is considered).  
Neither $\omega$ nor $\omega^\text{a}$ are arity-wise
equivalences of categories. However, by~\cite[Theorem A]{giansiracusa} $\omega^\text{a}$ induces, after applying nerve and geometric realization,
 a bijection on $\pi_0$ and  a homotopy 
equivalence on all path components except the one for the solid closed 
torus, the closed three-dimensional ball, and the three-dimensional ball with one embedded disk
(actually, the latter two cases
are excluded from the discussion altogether in~\cite{giansiracusa}).

One can easily observe that $\omega$ induces a bijection between the sets of path components, simply because this is the case for $\omega^\text{a}$.
Therefore, it remains to prove that $\omega$ induces a homotopy equivalence after taking nerve and geometric realization on all path components except the solid closed torus:

\begin{itemize}
	\item
 It is clear that $\omega$ induces a homotopy equivalence
on the components corresponding to the three-dimensional ball with and without an
embedded disk because the corresponding components
in $\Envint \framed$ and $\Hbdy$ are contractible.

 \item We now treat the path components which are not a three-dimensional ball with or without an embedded disk (because these have already been dealt with) or the solid closed torus (because this case is excluded in the statement of the result). 
  In these remaining cases,
 we use
 Proposition~\ref{Prop:no0vertices} to conclude that
 it is enough to show
 that $\omega$ restricted to 
 the category $\int \left( \catf{Gr}_{\operatorname{conn}}^{\operatorname{a},g}(T)\to\Forests \ra{ \framed}\Cat\right)$
 induces a homotopy equivalence except for the cases of $(T,g)$ that we have excluded or have already dealt with:
  the three-dimensional ball $(\bullet=T_{-1},0)$, solid closed torus $(\bullet,1)$, and three-dimensional ball with one embedded disk $(T_0,0)$.
 But having made these exclusions, the graph part of the objects in $\int \left( \catf{Gr}_{\operatorname{conn}}^{\operatorname{a},g}(T)\to\Forests \ra{ \framed}\Cat\right)$
 does not contain vertices of valence one by definition. Hence, the restriction of $\omega$ to this
 subcategory agrees with $\omega^{\operatorname{a}}$, and the result follows from Giansiracusa's \cite[Theorem A]{giansiracusa}.
 \end{itemize}

	\end{proof}
	
	\begin{theorem}\label{thmmodenvpi}
		The canonical map $\omega \colon \Envint \framed \to \Hbdy$ from Theorem~\ref{thmgengiansiracusa}
		induces an equivalence $\Pi |B\Envint \framed| \ra{\simeq} \Hbdy$ 
		 of groupoid-valued modular operads.
	\end{theorem}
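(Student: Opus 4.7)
The plan is to reduce the claim to an arity-wise check on path components using the genus decomposition, and to treat the solid closed torus component separately via Theorem~\ref{thmGenus1envelope}.

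First I would observe that since $\Hbdy$ is groupoid-valued, the unit map $\Hbdy \to \Pi|B\Hbdy|$ is already an equivalence, so we just need to show that $\Pi|B\omega|$ is an arity-wise equivalence of groupoids. Since $\Pi$ only depends on $\pi_0$ and $\pi_1$, for any corolla $T$ it suffices to prove that the map $|B\Envint\framed|(T) \to |B\Hbdy|(T)$ induces a bijection on $\pi_0$ and an isomorphism on $\pi_1$ at every basepoint. Both sides split as disjoint unions over the genus by~\eqref{eqndecompgenus}, and $\omega$ respects this decomposition, so we can check the $(T,g)$ components one at a time.

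Next, for every component $(T,g)$ which is neither the solid closed torus $(\bullet,1)$ nor one of the trivially contractible cases $(T_{-1},0)$ and $(T_0,0)$, Theorem~\ref{thmgengiansiracusa} already gives a genuine homotopy equivalence, which upon applying $\Pi$ yields an equivalence of groupoids. The two excluded contractible cases are handled directly, since both source and target have contractible components there (in $\Envint\framed$ this is verified using Proposition~\ref{Prop:no0vertices} and the description of the slice in the proof of Theorem~\ref{thmgengiansiracusa}), so their fundamental groupoids are trivial.

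The only remaining, and genuinely subtle, case is the solid closed torus component $(\bullet,1)$. Here $\omega$ is no longer a homotopy equivalence, as the source carries extra higher homotopy. By Theorem~\ref{thmGenus1envelope}, the source is $|B\Envintone\framed(\bullet)| \simeq B\Diff(H_{1,0})$, while the target is $B\Map(H_{1,0})$. The map $\omega$ restricted to this component then fits into the standard fibration $B\Diff(H_{1,0})^0 \to B\Diff(H_{1,0}) \to B\pi_0\Diff(H_{1,0}) = B\Map(H_{1,0})$, whose induced map on $\pi_1$ is the identification $\pi_1 B\Diff(H_{1,0}) \cong \pi_0\Diff(H_{1,0}) = \Map(H_{1,0})$. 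Applying $\Pi$ therefore produces the correct groupoid $B\Map(H_{1,0})$. The key point to verify is that the map induced by $\omega$ on $\pi_1$ really is this standard quotient; I would check this on the two generators listed in Remark~\ref{remdiffgroup}, namely the Dehn twist $T$ and the rotation $R$, by tracing them through the equivalence of Theorem~\ref{thmGenus1envelope} (which itself is the dihedral homology computation of \cite[Theorem~2.2]{mwdiff} applied to $\mathfrak{A}_{\framed}=\mathbb{S}^1$). Concretely, the Dehn twist corresponds to the loop in $\mathfrak{A}_{\framed}=\mathbb{S}^1$ used in building the cyclic bar construction, while the rotation is implemented by the $\mathbb{Z}_2$-action on $\vec\Lambda^{\mathrm{op}}\rtimes\mathbb{Z}_2$; both can be matched with the matrix representatives in~\eqref{eqnTandRmatrix}. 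This identification of generators is the main technical step and the place where I would expect the most care to be required.
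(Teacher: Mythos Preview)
Your proposal is correct and follows essentially the same route as the paper: reduce via Theorem~\ref{thmgengiansiracusa} to the solid closed torus component, identify that component with $B\Diff(H_{1,0})\to B\Map(H_{1,0})$ via Theorem~\ref{thmGenus1envelope}, and verify on the generators $T$ and $R$ that $\omega$ really is this projection. Your separate treatment of the contractible components $(T_{-1},0)$ and $(T_0,0)$ is harmless but redundant, since Theorem~\ref{thmgengiansiracusa} (the unital version) already absorbs these cases.
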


\begin{proof}
	Thanks to $|B \Envint \framed|\simeq \mathbb{L}\Env \framed$ (see~\eqref{eqnenvenvint}),
	this follows from Theorem~\ref{thmgengiansiracusa} and
	the fact that, on the component of the solid closed torus, the map
	$\omega: \mathbb{L}\Env \framed \to\Hbdy$
	is given, up to homotopy equivalence, by the projection map \begin{align} B\Diff(H_{1,0})\to B\Map(H_{1,0}) \ , \label{eqnprojectionmap}
	\end{align} after using the homotopy equivalence
	$\envframedt   \simeq   B \Diff (H_{1,0})$ afforded by Theorem~\ref{thmGenus1envelope}.
	Once we show this, the proof is done because the map \eqref{eqnprojectionmap} induces an equivalence after taking  fundamental groupoids.
	
	To see that \eqref{eqnprojectionmap} is really the map 
that we extract from $\omega$, we observe that 
	the map $\omega$, as constructed in the proof of 
	Theorem~\ref{thmgengiansiracusa},
	 sends the twist $\theta$, seen as an automorphism of the object 
	 
	 \small
	  \begin{align} \left(  \text{circle with one vertex} , \text{solid cylinder} \in \framed(T_1)  \right) \in \int \left( \catf{Gr}_{\operatorname{conn}}^{\operatorname{a},1}(\bullet)\to\Forests \ra{ \framed}\Cat\right)\ , \end{align} \normalsize to the Dehn twist of the solid torus. Furthermore, the automorphism
	of 
	
	\small
	 \begin{align} \left(  \text{circle with two vertices} , \text{two solid cylinders} \in \framed(T_1 \sqcup T_1)  \right) \in \int \left( \catf{Gr}_{\operatorname{conn}}^{\operatorname{a},1}(\bullet)\to\Forests \ra{ \framed}\Cat\right)\ , \end{align} \normalsize
	  which exchanges the vertices is sent to the rotation
	$R$ from~\eqref{eqnTandRmatrix}. 
	This implies that the map extracted from $\omega$ at genus zero is really~\eqref{eqnprojectionmap} and therefore finishes the proof.
	\end{proof}

\begin{theorem}\label{thmansular}
	For any symmetric monoidal bicategory $\cat{M}$,
	the modular extension and the genus zero restriction 
	afford mutually weakly inverse equivalences between the 2-groupoids of
	$\cat{M}$-valued ansular functors 
	and cyclic framed $E_2$-algebras in $\cat{M}$.
	\end{theorem}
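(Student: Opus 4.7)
The proof will be a direct application of the two main technical results already established in the previous sections. The strategy is to build the equivalence as the composition
\[
\CycAlg(\framed) \;\simeq\; \ModAlg\bigl(\Pi |B\Envint\framed|\bigr) \;\simeq\; \ModAlg(\Hbdy),
\]
where the first equivalence is the modular extension/restriction adjunction of Theorem~\ref{thmositionmodext} applied to the groupoid-valued cyclic operad $\framed$, and the second equivalence is induced by pullback along the equivalence $\omega\colon \Pi |B\Envint\framed| \xrightarrow{\simeq} \Hbdy$ of groupoid-valued modular operads from Theorem~\ref{thmmodenvpi}. By Definition~\ref{defansularfunctordef}, the right-hand side is precisely the $2$-groupoid of $\cat{M}$-valued ansular functors, so this composition already yields the desired equivalence of $2$-groupoids.

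To make the statement explicit, I would next identify the two directions of the composite equivalence with modular extension and genus zero restriction. In one direction, a cyclic $\framed$-algebra $A$ is sent to its modular extension $\widehat{A}$ (a modular $\Envint\framed$-algebra landing in isomorphisms, hence a modular $\Pi|B\Envint\framed|$-algebra), which is then transported along $\omega$ to produce an ansular functor. The fact that $\omega$ is an equivalence of groupoid-valued modular operads guarantees that pullback along $\omega$ is a weak equivalence on $2$-groupoids of algebras valued in any symmetric monoidal bicategory $\cat{M}$, by the functoriality of $\ModAlg(-)$ in the source operad. In the opposite direction, an ansular functor $B\colon \Hbdy \to \End_\kappa^X$ pulls back along $\omega$ to a modular $\Pi |B\Envint\framed|$-algebra, whose restriction (in the sense of Theorem~\ref{thmositionmodext}) is a cyclic $\framed$-algebra obtained by evaluating $B$ on operations coming from genus zero handlebodies, i.e.\ the genus zero restriction.

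The main technical point to verify is the last identification: that the pullback along $\omega$ composed with the restriction functor $R$ of Theorem~\ref{thmositionmodext} really is the genus zero restriction in the naive sense. This reduces to the observation that the composite $\framed \to \Envint\framed \to \Pi |B\Envint\framed| \xrightarrow{\omega} \Hbdy$ agrees (up to canonical equivalence) with the inclusion of the genus zero part of $\Hbdy$; this is built into the definition of $\omega$ in Theorem~\ref{thmgengiansiracusa}, where a corolla decorated with a single genus zero handlebody is sent to that handlebody viewed as an object of $\Hbdy$.

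I expect the main obstacle to be purely bookkeeping: checking that all the coherence data (the symmetric monoidal $2$-transformations between symmetric monoidal bifunctors) composes correctly through the two equivalences and matches the modular extension/restriction description verbatim. Once the $1$-categorical picture is aligned, the $2$-groupoid statement follows because $\CycAlg(-)$ and $\ModAlg(-)$ take equivalences of groupoid-valued operads to equivalences of $2$-groupoids of algebras, a fact already used implicitly in the proof of Theorem~\ref{thmositionmodext}. With these identifications, modular extension and genus zero restriction are mutually weakly inverse, completing the proof.
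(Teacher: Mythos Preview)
Your proposal is correct and follows essentially the same route as the paper: apply Theorem~\ref{thmositionmodext} to $\framed$, then transport along the equivalence $\Pi|B\Envint\framed|\simeq\Hbdy$ of Theorem~\ref{thmmodenvpi}. The only point you leave slightly implicit is that the second step relies on the homotopy invariance of the 2-groupoid of modular algebras under equivalences of groupoid-valued modular operads, which the paper cites as \cite[Theorem~2.18]{cyclic}; otherwise your argument and the paper's coincide.
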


\begin{proof}
	Theorem~\ref{thmositionmodext}, when applied to $\framed$, gives us
	an equivalence of 2-groupoids
	\begin{align}\CycAlg(\framed) \simeq \ModAlg(\Pi |B\Envint \framed|)       \label{eqnclassrhs}
	\end{align} via modular extension and genus zero restriction.
	By Theorem~\ref{thmmodenvpi} we have an canonical equivalence $\Pi |B\Envint \framed|\simeq \Hbdy$ of modular operads. 
	Thanks to the homotopy invariance of 2-groupoids of modular algebras \cite[Theorem~2.18]{cyclic}, this implies that the 2-groupoid of modular $\Pi |B\Envint \framed|$-algebras (i.e.\ the right hand side of~\eqref{eqnclassrhs}) is equivalent to the 2-groupoid of modular $\Hbdy$-algebras, i.e.\ ansular functors.
	\end{proof}

In order to arrive at a classification of ansular functors, we need to describe cyclic framed $E_2$-algebras. This part has already been accomplished in \cite{cyclic}. Let us give a brief summary:
A \emph{balanced braided algebra} in a symmetric monoidal bicategory $\cat{M}$ is an object $A \in \cat{M}$
equipped with the following structure: \begin{itemize}
	\item A multiplication $\mu : A \otimes A \to A$ which is associative and unital
	(the unit is a 1-morphism $u:I\to A$ from the unit $I$ of $\cat{M}$ to $A$)
	 up to coherent isomorphism.
	\item An isomorphism $c: \mu \to \mu^\opp = \mu \circ \tau$ (here $\tau$ is the symmetric braiding of $\cat{M}$) called \emph{braiding} subject to the usual conditions known from the definition of a braided monoidal category.
	\item An isomorphism $\theta : \id_A \to \id_A$ called \emph{balancing} subject to the requirements
	\begin{align}
		\theta \circ \mu  &= c^2 \circ \mu(\theta  \otimes \theta) \ , \\
		\theta \circ u &= \id_u \ . 
		\end{align}
	\end{itemize}

It is well-known that balanced braided algebras are exactly \emph{non-cyclic} framed $E_2$-algebras:

\begin{theorem}[Wahl \cite{wahlthesis}, Salvatore-Wahl \cite{salvatorewahl}]\label{thmsawa}
	Framed $E_2$-algebras in a symmetric monoid\-al bicategory $\cat{M}$
	are equivalent to balanced braided algebras in $\cat{M}$.
	\end{theorem}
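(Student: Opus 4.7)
The plan is to reduce the statement to a generators-and-relations presentation of the fundamental groupoid operad $\Pi_1\framed$. First I would exploit that each space $\framed(n)$ is aspherical: its fundamental group is the ribbon braid group $\mathrm{RB}_n=B_n\ltimes\mathbb{Z}^n$, as one sees by identifying $\framed(n)$ with a quotient of the configuration space of little disks with a marked frame. Since $\cat{M}$ is a bicategory and the spaces $\framed(n)$ are $1$-truncated, a $\cat{M}$-valued algebra over $\framed$ depends only on $\Pi_1\framed$, so it suffices to classify $\Pi_1\framed$-algebras in $\cat{M}$.

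Next, I would use the splitting $\framed\simeq E_2\rtimes\SO(2)$, with $\SO(2)$ acting by rotation of the ambient disk. Applied to $\Pi_1$, this identifies $\Pi_1\framed$ with the semidirect product of the braided operad $\Pi_1 E_2$ with the circle groupoid $\Pi_1\SO(2)$. By Fiedorowicz's recognition theorem, an algebra over $\Pi_1 E_2$ in $\cat{M}$ is exactly a braided algebra: an associative, unital multiplication $\mu$ together with a $2$-isomorphism $c\colon\mu\to\mu\circ\tau$ satisfying the hexagon relations, nothing more. The $\SO(2)$-factor contributes a single $2$-automorphism $\theta$ of $\id_A$, namely the image of the generator of $\pi_1\SO(2)=\mathbb{Z}$; the semidirect product structure then forces precisely the compatibility between rotating the ambient disk and composing two little disks, which, after unraveling the monodromy of a full rotation around a configuration of two points, is exactly the balancing equation $\theta\circ\mu=c^{2}\circ\mu(\theta\otimes\theta)$; and the compatibility with the arity-zero operation gives $\theta\circ u=\id_u$. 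This establishes that every $\framed$-algebra gives rise to a balanced braided algebra, and conversely that any balanced braided algebra defines an algebra over the generators of $\Pi_1\framed$.

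The main obstacle is completeness: one has to check that there are no further relations in $\Pi_1\framed$ beyond associativity, the hexagons, and the two balancing equations. This is the crux of Wahl's analysis and of Salvatore--Wahl's operadic refinement. It reduces to a presentation of the ribbon braid groups and of their behaviour under operadic composition in $\framed$, together with the verification that the rotation monodromy at a composite configuration decomposes as prescribed into a full braid $c^2$ and the twists $\theta\otimes\theta$ on the factors. Once this presentation is in hand, the coherence theorem for balanced braided structures shows that any choice of balanced braided data in $\cat{M}$ extends uniquely (up to canonical $2$-isomorphism) to a symmetric monoidal pseudofunctor $\Pi_1\framed\to\End_\kappa^A$, yielding the reverse equivalence and concluding the proof.
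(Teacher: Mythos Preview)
The paper does not prove this theorem; it is stated with attribution to Wahl and Salvatore--Wahl and used as a black box. So there is no ``paper's own proof'' to compare against.

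Your sketch is a reasonable outline of the argument in the cited references: asphericity of the arity spaces, the semidirect-product description $\framed\simeq E_2\rtimes\SO(2)$ from Salvatore--Wahl, and the coherence/presentation analysis from Wahl's thesis that identifies the relations with the hexagons and the balancing identities. One small slip: at the end you write that the data extends to a map into $\End_\kappa^A$. The pairing $\kappa$ is part of the \emph{cyclic} (or modular) endomorphism operad; for the non-cyclic statement here the target should be the ordinary endomorphism operad of $A$ in $\cat{M}$, with no $\kappa$ in sight. Also note that in this paper $\framed$ is already taken to be groupoid-valued (the genus zero part of $\Hbdy$), so the passage to $\Pi_1$ is built into the definition rather than an additional reduction step.
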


It is proven in \cite[Theorem~5.4]{cyclic}
that the structure needed on a framed $E_2$-algebra in order to make it a \emph{cyclic} framed $E_2$-algebra is exactly the following:

\begin{definition}\label{defselfdual}
	A \emph{self-dual balanced braided algebra}
	$A$ in a symmetric monoidal bicategory $\cat{M}$ is a balanced braided algebra with product $\mu$, unit $u$, balancing $\theta$ and, additionally, a non-degenerate pairing 
	$\kappa : A \otimes A \to I$, i.e.\ a 1-morphism which exhibits $A$ as its own dual in the homotopy category of $\cat{M}$, and an isomorphism $\gamma : \kappa(u\otimes \mu)\to \kappa$ such that  \begin{itemize}\item the isomorphism
		$\kappa(u\otimes \mu(u,-))\longrightarrow \kappa(u,-)$ induced by 
		$\gamma$ agrees with the isomorphism induced by the unit constraint,
		\item $\kappa(\theta \otimes \id_A)=\kappa(\id_A \otimes \theta)$.
	\end{itemize}
	\end{definition}

\begin{theorem}[$\text{\cite[Theorem~5.4]{cyclic}}$]
	Cyclic framed $E_2$-algebras in a symmetric monoidal bicategory $\cat{M}$
	are equivalent to self-dual balanced braided algebras in $\cat{M}$.
\end{theorem}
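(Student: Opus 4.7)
The plan is to lift the Wahl--Salvatore-Wahl equivalence of Theorem~\ref{thmsawa} between framed $E_2$-algebras and balanced braided algebras to the cyclic setting by identifying exactly the extra algebraic structure that cyclicity adds. A cyclic framed $E_2$-algebra $(A,\kappa)$ in $\cat{M}$ carries by definition an underlying object $A$ and a non-degenerate pairing $\kappa:A\otimes A\to I$, and forgetting the cyclic symmetry (i.e.\ restricting the action along the inclusion of the non-cyclic operad of framed little $2$-disks into $\framed$) yields a framed $E_2$-algebra structure on $A$, hence a balanced braided algebra $(A,\mu,u,c,\theta)$ by Theorem~\ref{thmsawa}. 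The task reduces to reading off the additional coherence data produced by cyclic symmetry.

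To extract this data, I would observe that the multiplication generator $\mu\in\framed(T_2)$ has total arity $3$, and under cyclic rotation its three legs are freely permuted. One rotated form of $\mu$, when composed with the unit $u$ on one of its legs, yields a total-arity-$2$ operation which must be identified with the pairing $\kappa$ up to canonical isomorphism; this produces precisely the datum $\gamma:\kappa\circ(u\otimes\mu)\to\kappa$ of Definition~\ref{defselfdual}. The two axioms on $\gamma$ then have clean topological interpretations: the unit-normalization axiom reflects the compatibility of $u$ with the unit constraint inside $\framed$ under the cyclic action, and the symmetric balancing condition $\kappa(\theta\otimes\id_A)=\kappa(\id_A\otimes\theta)$ reflects the $\mathbb{Z}_2$-invariance of $\kappa$ combined with the fact that the balancing of the two factors of $A$ is implemented by Dehn twists on the boundary circles of the cylinder that are interchanged by the cyclic $\mathbb{Z}_2$-action.

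The main obstacle is \emph{completeness}: one must verify that the data $(\mu,u,c,\theta,\kappa,\gamma)$, subject to the axioms of Definition~\ref{defselfdual}, suffice to reconstruct the full cyclic framed $E_2$-algebra structure, and that no further relations beyond those listed are forced by the cyclic operad $\framed$. I would approach this via a generators-and-relations presentation of $\framed$ as a cyclic operad, extending the standard presentation of its underlying non-cyclic operad. Since $\framed$ is modelled by the genus zero part of the handlebody operad $\Hbdy$, a direct moduli-theoretic analysis of oriented genus zero handlebodies with labelled boundary disks---or alternatively an obstruction-theoretic argument exploiting that $\framed$ arises as the cyclic envelope of the little framed $2$-disks operad---shows that every cyclic operation reduces, up to coherent isomorphism, to iterated compositions of $\mu,u,c,\theta,\kappa$ mediated by $\gamma$.

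An inverse functor, from self-dual balanced braided algebras back to cyclic framed $E_2$-algebras, is then constructed by using $\kappa$ to cyclically dualize every composition of balanced braided operations; the two axioms on $(\kappa,\gamma)$ guarantee consistency with the cyclic equivariance axioms, and naturality of both assignments in $A$ upgrades the bijection to an equivalence of $2$-groupoids.
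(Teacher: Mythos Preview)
The paper does not give its own proof of this statement; it is quoted as \cite[Theorem~5.4]{cyclic} and used as a black box. There is therefore no paper-proof to compare your proposal against.

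That said, your outline has the right shape---restrict to the non-cyclic operad, invoke Theorem~\ref{thmsawa}, and then isolate what the cyclic symmetry adds---but it is a sketch, not a proof. The decisive step is the one you yourself flag as the main obstacle: \emph{completeness}. You do not actually produce a generators-and-relations presentation of $\framed$ as a cyclic operad; you only assert that ``a direct moduli-theoretic analysis \dots shows that every cyclic operation reduces \dots''. That sentence is exactly where the work lies. One must exhibit generators for each groupoid $\framed(T_n)$ together with the $\Sigma_{n+1}$-action, verify that the listed relations in Definition~\ref{defselfdual} are imposed, and---crucially---prove that no further relations are. This requires a concrete handle on the mapping class groups of genus-zero handlebodies with marked disks (equivalently, framed braid groups extended by the cyclic permutation of boundary components) and their interaction with operadic composition. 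None of that is supplied.

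Two smaller points. First, your extraction of $\gamma$ is phrased loosely: the cyclic rotation of the pair of pants does not literally ``compose with the unit on one of its legs to give $\kappa$''; rather, the image of $\mu$ in $\End_\kappa^A(T_2)=\cat{M}(A^{\otimes 3},I)$ is $\kappa(\id_A\otimes\mu)$, and the unitality of the operadic identity forces the isomorphism $\gamma$ after inserting $u$. Second, the phrase ``$\framed$ arises as the cyclic envelope of the little framed $2$-disks operad'' is not a standard construction and should not be invoked as if it made completeness automatic; the cyclic structure on $\framed$ is intrinsic to its description as genus-zero handlebodies, and there is no free functor doing the job for you here.
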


By combining this result with Theorem~\ref{thmansular}, we finally arrive at:

\begin{theorem}[Classification of ansular functors]\label{thmansular2}
	For any symmetric monoidal bicategory $\cat{M}$,
	the modular extension and the genus zero restriction 
	afford mutually weakly inverse equivalences between the bicategories of
	$\cat{M}$-valued ansular functors 
	and self-dual
	balanced braided algebras in $\cat{M}$.
\end{theorem}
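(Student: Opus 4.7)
The plan is to obtain Theorem~\ref{thmansular2} as the composition of two equivalences of 2-groupoids that have already been established in the excerpt, so that essentially no new content is needed beyond assembling the pieces.

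First, I would apply Theorem~\ref{thmansular} to identify the 2-groupoid of $\cat{M}$-valued ansular functors with the 2-groupoid $\CycAlg(\framed)$ of cyclic framed $E_2$-algebras in $\cat{M}$, with the equivalence implemented by modular extension in one direction and genus zero restriction in the other. Next, I would invoke the classification of cyclic framed $E_2$-algebras proved in \cite[Theorem~5.4]{cyclic} (quoted immediately above the statement) to identify $\CycAlg(\framed)$ with the 2-groupoid of self-dual balanced braided algebras in $\cat{M}$ in the sense of Definition~\ref{defselfdual}. Composing these two equivalences produces the asserted equivalence.

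The only point that requires a sentence of verification is that the equivalence of Theorem~\ref{thmansular} is compatible, on the nose, with the one in \cite[Theorem~5.4]{cyclic}: an ansular functor is first restricted to the genus zero part of $\Hbdy$, which under the equivalence $\Pi|B\Envint\framed|\simeq \Hbdy$ of Theorem~\ref{thmmodenvpi} corresponds to the underlying cyclic framed $E_2$-algebra, and this cyclic $\framed$-algebra is then unpacked into its self-dual balanced braided structure using \cite[Theorem~5.4]{cyclic}. Conversely, given a self-dual balanced braided algebra $A$, one first builds the associated cyclic $\framed$-algebra and then applies the modular extension from Theorem~\ref{thmositionmodext} to obtain a modular $\Pi|B\Envint\framed|$-algebra, which transports along $\omega$ to the desired $\Hbdy$-algebra.

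The main (and only) obstacle is really a matter of bookkeeping: checking that the named functors ``modular extension'' and ``genus zero restriction'' in the statement coincide, under the identifications above, with the composite equivalences that I just described. This is immediate from the construction of modular extension in Section~\ref{secmodext} together with the explicit form of $\omega$ used in the proof of Theorem~\ref{thmmodenvpi}, so the proof reduces to a single paragraph chaining the two quoted equivalences.
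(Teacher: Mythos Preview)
Your proposal is correct and matches the paper's approach exactly: the paper simply states that Theorem~\ref{thmansular2} is obtained ``by combining this result [\cite[Theorem~5.4]{cyclic}] with Theorem~\ref{thmansular}'', i.e.\ by composing the equivalence between ansular functors and cyclic framed $E_2$-algebras with the equivalence between the latter and self-dual balanced braided algebras. The extra paragraph you wrote about compatibility of the named functors is more detail than the paper itself gives, but it is correct and harmless.
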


As a consequence of Theorem~\ref{thmansular2}, we can deduce that for a cyclic framed $E_2$-algebra $A$, the symmetries of $A$ act on the ansular functor $\widehat{A}$ obtained by modular extension; in particular, they intertwine with the handlebody group representations. In fact, a much stronger statement is true:
\begin{corollary}
	For any cyclic framed $E_2$-algebra $A$ in $\cat{M}$, the modular extension provides an equi\-valence
	\begin{align} \Aut(A) \ra{\simeq} \Aut\left(\widehat{A}\right) 
		\end{align}
	between the 2-group of autoequivalences of $A$, as cyclic framed $E_2$-algebra,
	 to the 2-group of autoequivalences of the ansular functor $\widehat{A}$.
	\end{corollary}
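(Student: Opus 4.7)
The plan is to observe that this corollary is essentially a formal consequence of Theorem~\ref{thmansular2} (equivalently, Theorem~\ref{thmansular}), which asserts that the modular extension furnishes an equivalence of 2-groupoids between cyclic framed $E_2$-algebras and ansular functors in $\cat{M}$. The main point to unpack is just that a biequivalence of 2-groupoids induces equivalences on all hom-categories, and for a 2-groupoid the endomorphism category of any object is itself a 2-group coinciding with the automorphism 2-group of that object.

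Concretely, I would proceed as follows. First, recall from \cite[Proposition~2.16]{cyclic} that $\CycAlg(\framed)$ and the 2-groupoid of ansular functors are genuinely 2-groupoids, so that all 1-morphisms and 2-morphisms are invertible. Consequently, for any cyclic framed $E_2$-algebra $A$, the endomorphism category of $A$ inside $\CycAlg(\framed)$ coincides with the automorphism 2-group $\Aut(A)$, and similarly for $\widehat{A}$ inside the 2-groupoid of ansular functors. Second, apply Theorem~\ref{thmansular2}: the modular extension functor $\widehat{-}$ is a biequivalence, and in particular is essentially surjective, and fully faithful on hom-categories. The induced functor on endomorphisms,
\begin{align}
\widehat{-} : \Aut(A) \to \Aut(\widehat{A}),
\end{align}
is therefore an equivalence of categories; it is clearly also monoidal (as it arises from the composition of the biequivalence with itself on morphism categories), so it is an equivalence of 2-groups.

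There is no real obstacle here: the content lies in Theorem~\ref{thmansular2}, and the corollary is just a translation to the language of automorphism 2-groups. The only minor subtlety worth spelling out is that one should check that the biequivalence induces a \emph{monoidal} functor on endomorphism 2-groups, i.e.\ that composition of automorphisms is preserved up to coherent isomorphism; but this is automatic from the compositional coherence data of the biequivalence of bicategories, and does not require any new argument beyond what is already encoded in the proof of Theorem~\ref{thmositionmodext}.
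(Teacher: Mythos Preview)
Your proposal is correct and matches the paper's approach: the corollary is stated without proof, introduced simply as ``a consequence of Theorem~\ref{thmansular2}'', and your argument spells out precisely the standard reason why a biequivalence of 2-groupoids restricts to an equivalence of automorphism 2-groups at each object.
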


In the case $\cat{M}=\Lexf$, it is proven in \cite[Theorem~5.11]{cyclic} that
cyclic framed $E_2$-algebras are 	equivalent to ribbon Grothendieck-Verdier categories in the sense of Boyarchenko-Drinfeld \cite{bd}. 	A \emph{Grothendieck-Verdier category} in $\Lexf$ is a monoidal category $\cat{C}$ in $\Lexf$ with a distinguished object $K\in\cat{C}$, the \emph{dualizing object}, such that for all $X\in\cat{C}$ the functor $\cat{C}(K,X\otimes-)$ is representable in a way that the functor $D:\cat{C}\to \cat{C}^\opp$ (called \emph{duality functor}) sending $X$ to a representing object $DX$ is an equivalence. A \emph{ribbon Grothendieck-Verdier category} in $\Lexf$ is a Grothendieck-Verdier category in $\Lexf$ together with a braiding, i.e.\ coherent natural isomorphisms $X\otimes Y\cong Y\otimes X$ for $X,Y\in\cat{C}$, and a balancing, i.e.\ a natural automorphism of $\id_\cat{C}$ whose components $\theta_X:X\to X$ satisfy $\theta_I=\id_I$ and $\theta_{X\otimes Y}=c_{Y,X}c_{X,Y}(\theta_X\otimes\theta_Y)$; additionally, one requires $\theta_{DX}=D\theta_X$. 
By replacing left exact functors with right exact ones, one obtains the symmetric monoidal bicategory $\Rexf$ instead of $\Lexf$. When defining (ribbon) Grothendieck-Verdier categories in $\Rexf$, one needs to ask for the representability of $\cat{C}(X\otimes-,K)$ rather than $\cat{C}(K,X\otimes-)$. 
In fact, this is the convention for Grothendieck-Verdier structures that is used in \cite{bd}; the one used above for $\Lexf$ is obtained via dualization.
With these slightly different conventions for Grothendieck-Verdier duality in $\Lexf$ and $\Rexf$,
we obtain from
\cite[Theorem~5.11]{cyclic}
the following `linear version' of
Theorem~\ref{thmansular}:

\begin{theorem}[Classification of ansular functors --- linear version]\label{thmclassansfunctor}
	The modular extension and the genus zero restriction afford mutually weakly inverse equivalences between
ansular functors with values in  $\Lexf$ or $\Rexf$
	and ribbon Grothendieck-Verdier categories in $\Lexf$ or $\Rexf$. 
\end{theorem}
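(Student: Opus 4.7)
The plan is to derive Theorem~\ref{thmclassansfunctor} as a direct composition of the general classification Theorem~\ref{thmansular2} with the linear characterization of cyclic framed $E_2$-algebras established in \cite[Theorem~5.11]{cyclic}. In other words, there is no genuinely new topology to prove here; everything has been arranged upstream so that Theorem~\ref{thmclassansfunctor} becomes a corollary.

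First I would specialize Theorem~\ref{thmansular2} to the symmetric monoidal bicategory $\cat{M}=\Lexf$. This yields a mutually weakly inverse pair of equivalences of 2-groupoids between $\Lexf$-valued ansular functors and $\Lexf$-valued self-dual balanced braided algebras, implemented by modular extension (in one direction) and genus zero restriction (in the other). Next I would invoke \cite[Theorem~5.11]{cyclic}, which identifies self-dual balanced braided algebras in $\Lexf$ with ribbon Grothendieck-Verdier categories in $\Lexf$ in the convention where $\cat{C}(K,X\otimes-)$ is asked to be representable. Composing the two equivalences gives the statement for $\Lexf$; since neither step disturbs the underlying object or the genus zero data, the composite is still implemented by modular extension and genus zero restriction.

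For the $\Rexf$ case, the same two-step recipe applies: $\Rexf$ is a symmetric monoidal bicategory under the Deligne product, so Theorem~\ref{thmansular2} specializes directly to give an equivalence between $\Rexf$-valued ansular functors and self-dual balanced braided algebras in $\Rexf$. The linear translation \cite[Theorem~5.11]{cyclic}, when read in $\Rexf$, identifies these with ribbon Grothendieck-Verdier categories in $\Rexf$, this time in the convention where $\cat{C}(X\otimes-,K)$ is representable, i.e.\ the original convention of Boyarchenko-Drinfeld \cite{bd}. The only genuine obstacle is organizational rather than mathematical: ensuring that the conventions for the duality functor are tracked consistently through both equivalences in both the left exact and right exact settings. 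This bookkeeping is precisely what is encoded in the final paragraph of the excerpt preceding the statement, so once that dictionary is in place the proof reduces to citing the two results and observing that the implementing functors match.
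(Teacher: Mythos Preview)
Your proposal is correct and matches the paper's own approach: the paper presents Theorem~\ref{thmclassansfunctor} without a separate proof, deriving it in the preceding paragraph from Theorem~\ref{thmansular} combined with \cite[Theorem~5.11]{cyclic}. The only cosmetic difference is that you route through Theorem~\ref{thmansular2} (self-dual balanced braided algebras) rather than Theorem~\ref{thmansular} (cyclic framed $E_2$-algebras), but since Theorem~\ref{thmansular2} is itself Theorem~\ref{thmansular} composed with \cite[Theorem~5.4]{cyclic}, this is merely a different grouping of the same chain of equivalences.
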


\begin{remark}\label{remgenuszero}
	Let $\cat{C}$ be a ribbon Grothendieck-Verdier category in $\Lexf$.
	Then the genus zero restriction of the ansular functor associated to $\cat{C}$ is, of course, $\cat{C}$ itself. 
	This means that the functor $\cat{C}^{\boxtimes n} \to \cat{C}$ associated to a genus zero handlebody with $n+1$ embedded disks (to be thought of as $n$ inputs and one output) is
	given by the $n$-fold monoidal product
	\begin{align}\cat{C}^{\boxtimes n}\to\cat{C} \ , \quad X_1 \boxtimes \dots \boxtimes X_n \mapsto X_1 \otimes \dots \otimes X_n  \ . \label{eqnCn}
		\end{align}
	We are choosing here an ordering of the embedded disks, but this could be avoided by using unordered monoidal products.
	Using the cyclic structure, we may understand~\eqref{eqnCn} as an operation with $n+1$ inputs and zero outputs. This gives us the functor
	\small
	\begin{align}
		\cat{C}^{\boxtimes (n+1)}\to\fVect \ , \quad  X_0 \boxtimes \dots \boxtimes X_n \mapsto \cat{C}(DX_0,X_1\otimes\dots\otimes X_n)\cong \cat{C}(K,X_0\otimes\dots \otimes X_n) \ .  \label{eqnCn2}
		\end{align} \normalsize
	Indeed, by the definition of the cyclic structure the functor $\cat{C}^{\boxtimes (n+1)}\to\fVect$
	corresponding to~\eqref{eqnCn} sends $ X_0 \boxtimes \dots \boxtimes X_n$ to $\kappa(X_0,X_1\otimes\dots\otimes X_n)$, where $\kappa :\cat{C}\boxtimes\cat{C}\to\fVect$ is the symmetric non-degenerate pairing that is part of the cyclic structure. Now the statement follows from the canonical isomorphism $\kappa(X,Y)\cong \cat{C}(DX,Y)$ for all $X,Y\in\cat{C}$ \cite[Lemma~2.21]{cyclic}.
	By virtue of the braiding and the balancing we have actions of the mapping class groups of genus zero handlebodies (equivalently: mapping class groups of genus zero surfaces), namely ribbon braid groups, on the functors~\eqref{eqnCn2}
	(this is a consequence Theorem~\ref{thmsawa}).
	Theorem~\ref{thmclassansfunctor} now tells us that there is a unique extension of~\eqref{eqnCn2} to an ansular functor. 
	\end{remark}

\spaceplease
\section{Applications}
The classification of ansular functors allows us to write down \emph{any} ansular functor in $\Lexf$ very explicitly, simply because we know thanks to the classification result that all ansular functors in $\Lexf$ are obtained by the modular extension procedure.
This will also make heavy use of the computations in \cite[Section 7.2]{cyclic}. These, however, are a priori only valid for handlebodies which are not the solid closed torus.
For this reason, we first need to treat the solid torus separately. 

A central ingredient will be the canonical coend $\mathbb{F} \in \cat{C}$ of a Grothendieck-Verdier category $\cat{C}$ in $\Lexf$ which is defined as the image of the coend
$\int^{X\in\cat{C}} DX\boxtimes X\in\cat{C}\boxtimes\cat{C}$ under the monoidal product. 
This generalizes the Lyubashenko coend \cite{lyu,lyulex} of a finite tensor category. For this reason, we use the notation $\mathbb{F}=\int^{X\in\cat{C}} DX \otimes X$, but we want to emphasize that the proper definition is really $\mathbb{F} := \otimes \left(     \int^{X\in\cat{C}} DX\boxtimes X  \right)$. 

\begin{lemma}\label{lemmareptorus}
	Let $\cat{C}$ be ribbon Grothendieck-Verdier category $\cat{C}$ in $\Lexf$ with dualizing object $K$ and duality functor $D$.
The $\Map(H_{1,0})$-representation that the ansular functor associated to $\cat{C}$ gives rise to has the underlying vector space 
$\cat{C}(K,\mathbb{F})$ and
can be explicitly described as follows:
\begin{pnum}

\item The mapping class group element $T$ (Dehn twist along the waist of the solid closed torus, see~\eqref{eqnTandRmatrix})
acts by the automorphism of $\cat{C}(K,\mathbb{F})$ that is induced by the automorphism
\begin{align} t : \mathbb{F}=\int^{X\in\cat{C}}X\otimes DX \ra{\theta_X\otimes DX} \int^{X\in\cat{C}}X\otimes DX=\mathbb{F} \ , \label{eqnautT}
\end{align}
where $\theta_X :X \to X$ is the balancing.\label{pnumT}

\item\label{pnumR}
The mapping class group element $R$
(rotation by $\pi$, see~\eqref{eqnTandRmatrix})
 acts by the automorphism  
of $\cat{C}(K,\mathbb{F})$ that is induced by the automorphism
\begin{align}
r : \mathbb{F}=\int^{X\in\cat{C}}X\otimes DX \ra{  (\theta_{DX}\otimes X)\circ c_{X,DX}   }\int^{X\in\cat{C}}DX \otimes X \cong \int^{X\in\cat{C}}X\otimes DX=\mathbb{F} \ . \label{eqnautR}
\end{align}

\end{pnum}

\end{lemma}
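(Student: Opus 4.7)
The plan is to combine Theorem~\ref{thmositionmodext} (the modular extension equivalence) with the explicit identification of the generators $T$ and $R$ of $\Map(H_{1,0})$ inside the derived modular envelope of $\framed$ recorded in the proof of Theorem~\ref{thmmodenvpi}. First I determine the underlying vector space. The solid closed torus is represented in $(\Envint\framed)(\bullet)$ by the circle with a single bivalent vertex labelled by the identity $1\in\framed(T_1)$ (i.e.\ the solid cylinder). Under the modular extension $\widehat{A}_\bullet$ for the cyclic algebra $A=\cat{C}$, the vertex contributes the pairing $\kappa(X,Y)=\cat{C}(K,X\otimes Y)$, and the self-gluing of its two legs becomes precomposition with the coevaluation of $\kappa$, realised in $\Lexf$ by the coend $\int^{X}DX\boxtimes X$. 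Evaluating and commuting the coend past $\cat{C}(K,-)$ by left exactness gives $\int^{X}\cat{C}(K,X\otimes DX)=\cat{C}(K,\mathbb{F})$.

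For part \ref{pnumT}, the proof of Theorem~\ref{thmmodenvpi} identifies $T$ as $\omega(\theta)$, where $\theta$ is the automorphism of this representative induced by the loop $\theta\in\Aut_{\framed(T_1)}(1)$. Under the cyclic framed $E_2$-structure on $\cat{C}$ (Definition~\ref{defselfdual} together with Theorem~\ref{thmsawa}) this $\theta$ is precisely the ribbon twist, and by functoriality of the Grothendieck-construction-based extension it induces on $\int^{X}X\otimes DX$ the coend automorphism $(\theta_X\otimes\id_{DX})_X$, matching $t$.

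For part \ref{pnumR}, I would instead use the representative \emph{circle with two bivalent vertices, each labelled by $1$}, which also represents $H_{1,0}$ and whose value under $\widehat{A}_\bullet$ is canonically $\cat{C}(K,\mathbb{F})$ (via a double coend that collapses to one by co-Yoneda). The rotation $R$ is, by the proof of Theorem~\ref{thmmodenvpi}, $\omega$ applied to the symmetry swapping the two vertices. Translating this swap through the symmetric-monoidal coherence of $\widehat{A}:\Envint\framed\to\End_\kappa^\cat{C}$ decomposes it into the symmetric braiding of $\Lexf$, which on the coend descends to the braiding $c_{X,DX}$ of $\cat{C}$, together with a framing-induced half-twist by $\theta_{DX}$; after the reindexing $X\leftrightarrow DX$ identifying the two presentations of $\mathbb{F}$, one recovers the formula for $r$.

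The main obstacle is part \ref{pnumR}: converting the topological swap of two framed cylinders on the circle, via the passage from cyclic framed $E_2$-algebras to ribbon Grothendieck-Verdier categories, into the explicit composite $(\theta_{DX}\otimes\id_X)\circ c_{X,DX}$. The two-vertex representative is essential here because it realises $R$ as an honest graph symmetry rather than an operadic twist, which is what makes the coherence bookkeeping tractable.
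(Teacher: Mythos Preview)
Your argument for the underlying vector space and for part~\ref{pnumT} matches the paper's: the solid torus is presented as a self-glued cylinder, excision turns the gluing into a coend, and the Dehn twist $T$ is the balancing $\theta$ acting on one leg of the cylinder. This is exactly what the paper does.

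For part~\ref{pnumR} you take a genuinely different route, and there is a gap in it. You pass to the two-vertex circle (which is legitimate, since the proof of Theorem~\ref{thmmodenvpi} identifies $R$ with the vertex swap there), but your mechanism for extracting the formula is not right. The symmetric braiding of $\Lexf$ cannot ``descend to the braiding $c_{X,DX}$ of $\cat{C}$'': it is symmetric, and no amount of co-Yoneda collapsing turns a symmetric swap into a generically non-symmetric braiding. Likewise, the ``framing-induced half-twist by $\theta_{DX}$'' has no source in your description: both vertices carry the \emph{identity} cylinder, so a pure graph swap introduces neither a braiding of $\cat{C}$ nor a balancing. What actually happens in the two-vertex model is that the rotation $\tau_1$ does not merely swap the corollas --- it also swaps the two legs \emph{within} each $T_1$ (rotation by $\pi$ reverses the local cyclic order at every vertex). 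This internal leg swap is the cyclic $\mathbb{Z}_2$-action on $T_1$, which under the cyclic algebra structure is realized by the $\mathbb{Z}_2$-symmetry of $\kappa$, and it is \emph{this} symmetry that is implemented by $c$. The balancing then enters through the isomorphism $\sigma C\cong C$ in $\framed(T_1)$ that one must supply to complete the Grothendieck-construction morphism; this isomorphism is the flip of the solid cylinder, which carries a half-rotation of each boundary disk. None of this is visible in your sketch.

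The paper avoids all of this bookkeeping by staying with the single-cylinder model and arguing geometrically: the rotation $R$, viewed as a diffeomorphism of the cylinder with identified ends, exchanges the two boundary disks (yielding the braiding via the $\mathbb{Z}_2$-symmetry of $\kappa$) and gives each disk a half-rotation; relative to one disk this is a full rotation of the other, hence a single balancing. That argument reaches the formula for $r$ in two sentences, whereas making your two-vertex approach honest would require tracking the leg permutations and the choice of flip isomorphism explicitly.
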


\begin{proof} Let $X\in\cat{C}$.
	The vector space associated to a three-dimensional ball with two embedded disks (a solid cylinder) labeled by $X$ and $DX$ (both seen as inputs)
	is $\cat{C}(K,X\otimes DX)$, see Remark~\ref{remgenuszero}.
	We can obtain the solid closed torus by gluing the ends of a solid cylinder together.
	On the level of the ansular functor, this gluing translates to a Lyubashenko's left exact coend $\oint$; this is the 
	excision property of the modular extension of a cyclic algebra~\cite[Theorem~6.4]{cyclic}.
	Therefore, the ansular functor associated to $\cat{C}$ assigns to the solid closed torus the vector space
	$\oint^{X\in\cat{C}} \cat{C}(K,X\otimes DX)$. This is canonically isomorphic to $\cat{C}(K,\mathbb{F})$
	by \cite[Lemma~2.25]{cyclic}.

	Statement~\ref{pnumT} follows from the fact that,
	when writing the solid closed torus as the result of gluing the ends of a cylinder together, the Dehn twist
	amounts to a rotation of one of these disks and hence, on the algebraic level, acts by the balancing $\theta$ (actually, it does not matter which one is rotated; algebraically, this is reflected by the fact that the maps $\theta_X \otimes DX$ and $X \otimes \theta_{DX}=X\otimes D\theta_X$ induce the same map when passing to the coend).
	 
	For the proof of~\ref{pnumR}, we consider again the effect of $R$ on the solid cylinder with identified ends. The rotation exchanges the two embedded disks (i.e.\ we have to apply a braiding), and each of the disks experiences a half rotation. Relative to the second disk, this amounts to a full rotation of the first disk to which we therefore have to apply the balancing. 
	\end{proof}

\begin{remark}
	It is easy and very instructive to check algebraically by hand that through~\ref{pnumT} and~\ref{pnumR} in Lemma~\ref{lemmareptorus}
	we obtain indeed a $\mathbb{Z}\times\mathbb{Z}_2$-action: The automorphisms~\eqref{eqnautT} and~\eqref{eqnautR} 
commute by the naturality of the twist 
and the universal property of the coend
and hence give a $\mathbb{Z}\times\mathbb{Z}$-action on $\mathbb{F}$. 
Since $r^2=\theta_\mathbb{F}$, as automorphism of $\mathbb{F}$, and $\theta_K=\id_K$, the induced action of $R^2$ on $\cat{C}(K,\mathbb{F})$ is trivial.
As a consequence, we obtain a $\mathbb{Z}\times\mathbb{Z}_2$-action on $\cat{C}(K,\mathbb{F})$.
\end{remark}

We can now easily describe \emph{all} ansular functors with values in $\Lexf$ explicitly:

\begin{corollary}\label{corallansular}
	Given an arbitrary ansular functor with values in $\Lexf$, let $\cat{C}\in\Lexf$ be its genus zero part, i.e.\ a ribbon Grothendieck-Verdier category. Then the value of the ansular functor on a handlebody $H_{g,n}$ of genus $g$ and $n$ embedded disks labeled by $X_1,\dots,X_n \in \cat{C}$ (we pick here an order for the embedded disks) is isomorphic to the morphism space
	\begin{align}\cat{C}(K,X_1\otimes\dots \otimes X_n\otimes \mathbb{F}^{\otimes g}) \ .         \label{eqnvspF}
	\end{align} 
	\end{corollary}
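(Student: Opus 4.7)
The plan is to realize any handlebody $H_{g,n}$ as a gluing of a genus zero handlebody with itself along $g$ pairs of boundary disks, then apply the excision property of ansular functors (i.e.\ modular extensions of cyclic algebras) from \cite[Theorem~6.4]{cyclic}, and finally use the definition of the canonical coend $\mathbb{F}$ to simplify.

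Concretely, I would first exhibit $H_{g,n}$ topologically as obtained from a three-dimensional ball with $n+2g$ embedded disks by gluing $g$ of the disks to another $g$ of them, creating the $g$ handles. The genus zero piece, by Remark~\ref{remgenuszero}, is assigned the functor
\begin{equation}
\cat{C}^{\boxtimes(n+2g)}\to\fVect, \qquad X_1\boxtimes\dots\boxtimes X_n \boxtimes Y_1\boxtimes Y_1'\boxtimes\dots\boxtimes Y_g\boxtimes Y_g' \mapsto \cat{C}(K,X_1\otimes\dots\otimes X_n\otimes Y_1\otimes Y_1'\otimes\dots\otimes Y_g\otimes Y_g').
\end{equation}
Next, I would apply the excision axiom of the ansular functor once per handle: each of the $g$ gluings of a pair of disks is computed, on the algebraic side, as a Lyubashenko left exact coend over $\cat{C}$ in the corresponding pair of labels. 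Since these coends are independent, Fubini for coends lets us perform them in any order, yielding
\begin{equation}
\oint^{Y_1,\dots,Y_g\in\cat{C}} \cat{C}\bigl(K,\, X_1\otimes\dots\otimes X_n\otimes Y_1\otimes DY_1\otimes\dots\otimes Y_g\otimes DY_g\bigr).
\end{equation}
Finally, using \cite[Lemma~2.25]{cyclic} to commute the coend past $\cat{C}(K,-)$ and the tensor product, together with the very definition $\mathbb{F}=\otimes\bigl(\int^{Y\in\cat{C}}Y\boxtimes DY\bigr)$ of the canonical coend, collapses each coend into a factor of $\mathbb{F}$, giving exactly~\eqref{eqnvspF}.

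The step I expect to require the most care is the treatment of the solid closed torus $H_{1,0}$, since Giansiracusa's comparison fails on this component and the formula must be checked against Lemma~\ref{lemmareptorus}. Here the claimed value is $\cat{C}(K,\mathbb{F})$, which matches the underlying vector space identified in that lemma; the excision argument above applies verbatim, with the one handle obtained by gluing together the two ends of a solid cylinder, so the genus one case is consistent with the separate direct computation performed earlier. The remaining arities $(g,n)$ are covered uniformly by the decomposition and excision argument, completing the proof.
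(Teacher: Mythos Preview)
Your approach is essentially the same as the paper's: decompose $H_{g,n}$ as a genus zero piece with $g$ self-gluings, apply excision \cite[Theorem~6.4]{cyclic} handle by handle, and simplify via \cite[Lemma~2.25]{cyclic} and the definition of $\mathbb{F}$; the solid torus is handled separately via Lemma~\ref{lemmareptorus}, exactly as the paper does.

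One point to tighten: the excision theorem \cite[Theorem~6.4]{cyclic} you invoke is stated for the \emph{modular extension} of a cyclic algebra, not for an arbitrary modular $\Hbdy$-algebra. Your parenthetical ``ansular functors (i.e.\ modular extensions of cyclic algebras)'' treats this identification as definitional, but it is precisely the content of Theorem~\ref{thmclassansfunctor}: an arbitrary ansular functor is canonically equivalent to the modular extension of its genus zero restriction. The paper's proof invokes this classification result first, and only then appeals to excision and the explicit computations in \cite[Section~7.2]{cyclic}. You should make that reduction step explicit; otherwise the argument is correct.
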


\begin{proof}
	Thanks to Theorem~\ref{thmclassansfunctor},
	we know that we can restrict any given ansular functor to its genus zero part and recover it, up to canonical equivalence,
	by modular extension.
	The modular extension of a cyclic algebra can be computed via excision \cite[Theorem~6.4]{cyclic}, see \cite[Section 7.2]{cyclic} for an explicit description away from the component of the solid closed torus and Lemma~\ref{lemmareptorus} above for the remaining case
	of the solid closed torus.
	\end{proof}

\begin{example}\label{exhopf}
	Any finite ribbon category in the sense of \cite{egno} is a ribbon Grothendieck-Verdier category.
	In particular, the category of finite-dimensional modules over a finite-dimensional ribbon Hopf algebra $H$ is a ribbon Grothendieck-Verdier category.
	In this case, the vector space attached to a handlebody with genus $g$ is isomorphic to $\Hom_H(k, X_1\otimes\dots \otimes X_n\otimes (H_\text{coadj}^* )^{\otimes g})$, where
	$H_\text{coadj}^*$ is the dual of $H$ with the coadjoint action. 
	This example is discussed in \cite[Corollary~7.13]{cyclic}, see also~\cite[Example~3.8]{mwdiff}. Therefore, we omit the details here.
	\end{example}

\begin{corollary}\label{corrcat}
		The genus zero restriction 
		and modular extension afford mutually weakly inverse equivalences \begin{align}
	\left\{	\begin{array}{c} 
	\text{ansular functors $A$ with values in  $\Lexf$} \\ \text{such that $A(H_{0,1} : \emptyset \to \mathbb{D}^2)$} \\ \text{is left adjoint to $A(H_{1,0} : \mathbb{D}^2 \to \emptyset)$}
	\end{array}\right\} \simeq \left\{  \begin{array}{c} \text{ribbon Grothendieck-Verdier} \\ \text{categories in $\Lexf$} \\
	\text{such that $K\cong I$} \end{array}
	 \right\} \ . \end{align}
	\end{corollary}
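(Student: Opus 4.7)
The plan is to transfer the statement to the algebraic side of the classification, identify the two 1-morphisms explicitly, and reduce the adjointness condition to a Yoneda computation. By Theorem~\ref{thmclassansfunctor}, an $\Lexf$-valued ansular functor $A$ is, up to canonical equivalence, the modular extension of a ribbon Grothendieck-Verdier category $\cat{C}$ in $\Lexf$, so it suffices to translate the condition on the 1-morphisms $A(\mathbb{D}^2 : \emptyset \to \mathbb{S}^1)$ and $A(\mathbb{D}^2 : \mathbb{S}^1 \to \emptyset)$ into an algebraic condition on $\cat{C}$ and to verify that it coincides with $K \cong I$.

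First I would identify the two operations on the algebraic side. The three-dimensional ball with a single embedded disk is the arity-zero operation of the unital framed $E_2$-operad $\framed$, so under the equivalence between cyclic framed $E_2$-algebras and self-dual balanced braided algebras underlying Theorem~\ref{thmclassansfunctor}, it is sent to the unit $u : I \to \cat{C}$. Concretely,
\begin{align}
A(\mathbb{D}^2 : \emptyset \to \mathbb{S}^1) : \fVect \to \cat{C}\ , \qquad V \mapsto V \otimes I
\end{align}
picks out the monoidal unit $I \in \cat{C}$. The same handlebody re-interpreted with its embedded disk as input is obtained by composing with the cyclic pairing $\kappa(X, Y) = \cat{C}(DX, Y)$ from Remark~\ref{remgenuszero}, giving
\begin{align}
A(\mathbb{D}^2 : \mathbb{S}^1 \to \emptyset) : \cat{C} \to \fVect\ , \qquad X \mapsto \kappa(u, X) = \cat{C}(DI, X) = \cat{C}(K, X)\ ,
\end{align}
where the last equality uses $DI = K$ in any Grothendieck-Verdier category.

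With these identifications, the adjointness becomes a direct Yoneda computation. Writing $L = (V \mapsto V \otimes I)$ and $R = \cat{C}(K, -)$, an adjunction $L \dashv R$ amounts to a natural isomorphism
\begin{align}
\cat{C}(V \otimes I, X) \cong \Hom_k(V, \cat{C}(K, X))
\end{align}
in $V \in \fVect$ and $X \in \cat{C}$. The $\fVect$-enrichment of finite linear categories identifies the left hand side with $\Hom_k(V, \cat{C}(I, X))$, so the adjunction exists if and only if $\cat{C}(I, -) \cong \cat{C}(K, -)$ as functors, which by the Yoneda lemma is equivalent to an isomorphism $K \cong I$, i.e., to $\cat{C}$ being an $r$-category in the sense of \cite[Definition~0.5]{bd}. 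Since both conditions pass through the equivalence of Theorem~\ref{thmclassansfunctor} compatibly, the restricted equivalence of 2-groupoids claimed in the Corollary follows.

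The main technical point I anticipate is matching $A(\mathbb{D}^2 : \emptyset \to \mathbb{S}^1)$ with the unit $u : I \to \cat{C}$ rather than with the functor picking out the dualizing object $K$: this requires correctly using the cyclic self-duality to switch between the two incarnations of an operation of total arity one and keeping track of $DI = K$. Once this identification is made, the remainder of the argument is a single line of Yoneda.
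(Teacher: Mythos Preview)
Your proposal is correct and follows essentially the same route as the paper: identify $A(\mathbb{D}^2:\emptyset\to\mathbb{S}^1)$ with the unit $I\in\cat{C}$ and $A(\mathbb{D}^2:\mathbb{S}^1\to\emptyset)$ with $\cat{C}(K,-)$, then reduce the adjunction to $\cat{C}(I,-)\cong\cat{C}(K,-)$ and apply Yoneda. The only cosmetic difference is that the paper reads off $\cat{C}(K,-)$ directly from Remark~\ref{remgenuszero} rather than via the intermediate step $\kappa(I,-)\cong\cat{C}(DI,-)$ with $DI\cong K$.
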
 
The notation 
$A(H_{0,1} : \emptyset \to \mathbb{D}^2)$ and
$A(H_{1,0} : \mathbb{D}^2 \to \emptyset)$ was introduced in
 Remark~\ref{rem: concrete description ansular functor}.

The adjointness relation on the left hand side and the isomorphism $K\cong I$ 
on the right hand side
are considered as properties.
The 2-groupoids on both sides are the full sub-2-groupoids of the 2-groupoids
of ansular functors and ribbon Grothendieck-Verdier categories
spanned by the objects with the respective properties.  
The mathematical structure on the right hand side is closely related to the (ribbon version of the) notion of an
\emph{$r$-category} \cite[Definition~0.5]{bd}, a monoidal category whose monoidal unit is dualizing. 
But we have to be careful: The property on the right hand side in Corollary~\ref{corrcat} 
is a property of a ribbon Grothendieck-Verdier category while being a $r$-category is a property of a monoidal category;
 and even if a monoidal category is an $r$-category, it may have a dualizing object that is not the monoidal unit~\cite[Section~1.2]{bd}.

\begin{proof}[\slshape Proof of Corollary~\ref{corrcat}] Suppose that $A$ has underlying category $\cat{C} \in\Lexf$.
	From the definition of the modular extension, it follows that the 
	 functor $A(H_{0,1} : \emptyset \to \mathbb{D}^2) : \fVect \to \cat{C}$ is given by the unit of $\cat{C}$ (note that a left exact functor $\fVect \to \cat{C}$ is determined by its value on the ground field and hence amounts to an object in $\cat{C}$) while the functor $A(H_{1,0} : \mathbb{D}^2 \to \emptyset):\cat{C}\to\fVect$ is given by $\cat{C}(K,-)$. 
	 Therefore, $A(H_{0,1}: \emptyset \to \mathbb{D}^2)\dashv A(H_{1,0} : \mathbb{D}^2 \to \emptyset) $ holds if and only if $I\dashv \cat{C}(K,-)$, where `$\dashv$' means `is left adjoint to'. The adjunction
	 $I\dashv \cat{C}(K,-)$ means exactly that we have isomorphisms
	 $\cat{C}(I, X)\cong \Hom_k(k,\cat{C}(K,X))\cong\cat{C}(K,X)$ natural in $X$ which, by the Yoneda Lemma, is exactly an isomorphism $I\cong K$.
	 Now the statement follows from Theorem~\ref{thmclassansfunctor}.
	\end{proof}

\begin{corollary}
	For any ansular functor with values in $\Lexf$ with genus zero restriction $\cat{C}\in\Lexf$, the vector space associated to the solid closed torus
	(without its handlebody group action)
	depends only on $\cat{C}$ as linear category. 
	The vector space associated to the handlebody of genus $g$ without embedded disks for $g\neq 1$ generally depends on the ribbon Grothendieck-Verdier structure of $\cat{C}$ and not only on $\cat{C}$ as linear category. 
	\end{corollary}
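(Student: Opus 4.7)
To prove the first claim, start from Lemma~\ref{lemmareptorus}, which identifies the vector space attached to $H_{1,0}=\mathbb{S}^1\times\mathbb{D}^2$ with $\cat{C}(K,\mathbb{F})$, and rewrite it as the left-exact coend
\begin{align}
\cat{C}(K,\mathbb{F}) \cong \oint^{X\in\cat{C}} \cat{C}(K, X\otimes DX)
\end{align}
as in \cite[Lemma~2.25]{cyclic}. Next apply the defining Grothendieck-Verdier representability $\cat{C}(K,A\otimes B)\cong\cat{C}(DA,B)$ with $A=X$ and $B=DX$ to turn the integrand into $\cat{C}(DX,DX)$. Since $D\colon\cat{C}\to\cat{C}^\opp$ is an equivalence, the change of variables $Y=DX$ inside the coend yields
\begin{align}
\cat{C}(K,\mathbb{F}) \cong \oint^{Y\in\cat{C}} \cat{C}(Y,Y).
\end{align}
The right-hand side is a construction purely in terms of $\cat{C}$ as a $k$-linear finite abelian category, with no reference to $K$, $D$, the monoidal product, or the ribbon data, which proves the first claim.

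\textbf{Other genus.} For the second claim, Corollary~\ref{corallansular} gives the vector space $\cat{C}(K,\mathbb{F}^{\otimes g})$ for the handlebody $H_{g,0}$; specializing to $g=0$ this is $\cat{C}(K,I)$. The plan is to exhibit two different ribbon Grothendieck-Verdier structures on a common underlying linear category with different values for $\cat{C}(K,I)$. Take $\cat{C}=\mathrm{Rep}(\Z/2)$ over a field $k$ with $\mathrm{char}(k)\neq 2$, equipped with its standard symmetric monoidal structure, and denote the non-trivial character by $\epsilon$. The standard rigid ribbon structure has dualizing object $K_1=I$ and produces $\cat{C}(K_1,I)\cong k$, of dimension one. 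Alternatively, the choice $K_2=\epsilon$ forces via the representability $\cat{C}(K_2,X\otimes{-})\cong\cat{C}(D_2X,{-})$ the duality functor $D_2$ to swap $I$ with $\epsilon$; together with the standard symmetric braiding and trivial balancing this is a genuine ribbon Grothendieck-Verdier structure, yet $\cat{C}(K_2,I)=\cat{C}(\epsilon,I)=0$. The two structures have identical underlying linear category but produce vector spaces of different dimensions, proving the second claim.

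\textbf{Main obstacle.} The subtle step will be the coend change of variables $Y=DX$ in the first part: one needs the bifunctor $(X,Y)\mapsto\cat{C}(DX,DY)$ to be canonically isomorphic to $(X,Y)\mapsto\cat{C}(Y,X)$ through the contravariant equivalence $D$, and this isomorphism must be compatible with the left-exact coend $\oint$. Both facts hold because $\oint$ is an intrinsic construction on $\cat{C}$ as a $k$-linear finite abelian category and $D$ is an equivalence, but one should check carefully that no monoidal or ribbon data is tacitly used. For the second part, the only additional point is that structure~(ii) really satisfies the ribbon Grothendieck-Verdier axioms, which is immediate because the monoidal category is symmetric and $D_2$ is patently an equivalence.
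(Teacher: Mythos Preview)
Your argument for the solid closed torus is correct and essentially the paper's: both reduce $\cat{C}(K,\mathbb{F})$ to the left exact coend $\oint^{X\in\cat{C}}\cat{C}(X,X)$ via the Grothendieck--Verdier representability, which is manifestly independent of the monoidal and ribbon data.

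For the second claim there is a genuine gap. The statement asserts that for \emph{every} $g\neq 1$ the vector space attached to $H_{g,0}$ can depend on the ribbon Grothendieck--Verdier structure, and the paper's proof makes this explicit (``for a fixed $g$, we can by a suitable choice \ldots''). You only treat $g=0$. Worse, your particular example does not extend: with your two structures on $\mathrm{Rep}(\Z/2)$ one has $\mathbb{F}_1\cong I^{\oplus 2}$ and $\mathbb{F}_2\cong \epsilon^{\oplus 2}$, hence
\[
\cat{C}(K_1,\mathbb{F}_1^{\otimes g})\cong k^{2^g}\ ,\qquad
\cat{C}(K_2,\mathbb{F}_2^{\otimes g})\cong \cat{C}(\epsilon,\epsilon^{g})^{\oplus 2^g}\ ,
\]
and the latter is also $k^{2^g}$ whenever $g$ is odd. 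So for every odd $g\ge 3$ your two structures yield vector spaces of the same dimension and fail to separate. The paper avoids this by using the family $\Vect_G^\omega$ of $G$-graded vector spaces for varying finite abelian $G$: there the genus $g$ space is $\mathbb{C}[G]^{\otimes g}\,\delta_{a_0,a_0^g}$, and for any fixed $g\neq 1$ one can pick $G$ and $a_0$ with $a_0^{g-1}\neq 1$ to force it to vanish, while $a_0=1$ always gives a non-zero answer. Replacing $\Z/2$ by a larger cyclic group in your construction (so that the shifted dualizing object has order not dividing $g-1$) would repair the argument along the same lines.
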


\begin{proof}
	The vector space associated to the solid closed torus is $\cat{C}(K,\mathbb{F})\cong  \oint^{X\in\cat{C}} \cat{C}(K,DX\otimes X)\cong \oint^{X\in\cat{C}} \cat{C}(X,X)$, see Lemma~\ref{lemmareptorus} and its proof. This depends only on $\cat{C}$ as linear category.
	
	Now we need to show that for a handlebody $H_{g}$ of genus $g\neq 1$ and no embedded disks, the vector space generally depends on the ribbon 
	Grothendieck-Verdier struture. To this end, we use the ribbon Grothendieck-Verdier category $\Vect_G^\omega$ associated to
	an abelian 3-cocycle on a finite abelian group $G$ with coefficients
	in $\mathbb{C}^\times$, see~\cite[Theorem~4.2.2]{zetsche} and additionally \cite[Example~5.14]{cyclic} for a summary. The underlying linear category is given by finite-dimensional $G$-graded complex vector spaces. In general, the abelian 3-cocycle is used to deform both the braiding and the associator of $\Vect_G$. For the proof, it is enough to consider $\omega=0$. The monoidal product of $G$-graded vector spaces $V=(V_a)_{a\in G}$ and $W=(W_a)_{a\in G}$ is given by $(V\otimes W)_{g}=\bigoplus_{bc=g} V_b \otimes W_c$. The duality is given by $D=\mathbb{C}_{a_0} \otimes (-)^*$, where $\mathbb{C}_{a_0}$ is $\mathbb{C}$ seen as $G$-graded vector space concentrated in degree $a_0 \in G$ with $a_0 = b_0^{-2}$ for some $b_0 \in G$. By \cite[Example~7.12]{cyclic}, the vector space  associated to the handlebody $H_g$ is
	$\mathbb{C}[G]^{\otimes g} \delta_{a_0,a_0^g}$. We can always choose $a_0=1$. Then this vector space is always non-zero. 
	But for a fixed $g$, we can by a suitable choice of $G$ and $a_0$ arrange $a_0^{g-1} \neq 1$. Then $\mathbb{C}[G]^{\otimes g} \delta_{a_0,a_0^g}=0$.
	In summary, we have exhibited an example in which the vector space associated to $H_g$ for $g \neq 1$ can be made non-zero or zero by a suitable choice the ribbon Grothendieck-Verdier structure, without changing the underlying linear category.
	\end{proof}
 
 \begin{example}
 	Let $A$ be an ansular functor in $\Lexf$ whose genus zero restriction $\cat{C}$ is a \emph{semisimple} ribbon Grothendieck-Verdier category with \emph{simple unit}.
 	Denote by $X_0=I,\dots,X_{n-1}$ a basis of simple objects and by
 	$N_{\alpha\beta}^\gamma \in \mathbb{Z}$ the
 	 fusion coefficients characterized by $X_\alpha \otimes X_\beta \cong \bigoplus_{\gamma=0}^{n-1} N_{\alpha\beta}^\gamma X_\gamma$. Denote by $\bar \alpha \in \{0,\dots,n-1\}$ the unique index such that $X_{\bar \alpha} \cong DX_\alpha$. 
 	The well-definedness of $\bar \alpha$ uses that $DX_\alpha$ is simple because $D$ is an equivalence. Since $\cat{C}$ is pivotal by \cite[Corollary~8.3]{bd}, $\bar{\bar\alpha}=\alpha$.
 	We can now express all dimensions $	\dim A (H_g) $
 	of the vector spaces associated to a handlebody of genus $g$ (for simplicity without embedded disks)
 	in terms of the fusion coefficients:
 	Corollary~\ref{corallansular} tells us $A(H_0) \cong \cat{C}(K,I)=\cat{C}(X_{\bar 0},X_0)$ and $A(H_1)\cong \cat{C}(K,\mathbb{F})$.
 	This proves 
 	\begin{align}
 	\dim A(H_0)=\delta_{0,\bar 0} \ . 
 	\end{align}
 	For $H_1$, we use $\mathbb{F} \cong \bigoplus_{\alpha=0}^{n-1} X_{\bar \alpha}\otimes X_\alpha$ from \cite[Corollary~5.1.9]{kl} and conclude 
 	\begin{align} \dim A(H_1)=n 
 	\end{align}
 	thanks to $\cat{C}(K,X_{\bar \alpha}\otimes X_\alpha)\cong \cat{C}(X_\alpha,X_\alpha)\cong k$.
 	For $g\ge 2$, we use again Corollary~\ref{corallansular} which tells us $A(H_g) = \cat{C}(K,\mathbb{F}^{\otimes g})\cong \cat{C}(D\mathbb{F},\mathbb{F}^{\otimes (g-1)})$. Since $\mathbb{F} \cong \bigoplus_{\alpha=0}^{n-1} X_{\bar \alpha}\otimes X_\alpha \cong \bigoplus_{\alpha,\beta=0}^{n-1}  N_{\bar \alpha \alpha}^\beta X_\beta $ and hence
 	$D\mathbb{F}\cong \bigoplus_{\alpha,\beta=0}^{n-1}  N_{\bar \alpha \alpha}^\beta X_{\bar \beta}$, we can explicitly write out the dimension of $A(H_g)$. 
 	 For $g=2$,
 	\begin{align}
 	\dim A(H_2)= \dim \cat{C} \left(    D\mathbb{F},\mathbb{F}   \right)= \sum_{\alpha,\beta,\alpha',\beta'=0}^{n-1} N_{\bar \alpha \alpha}^\beta N_{\bar \alpha' \alpha'}^{\beta'} \delta_{\beta \bar \beta'} = \sum_{\alpha,\beta,\alpha'=0}^{n-1}  N_{\bar \alpha \alpha}^\beta N_{\bar \alpha' \alpha'}^{\bar \beta} \ . 
 	\end{align}
 	 For $g>2$, we just iterate the procedure, but no new insight enters.
 	 These dimensions can be computed for all 
 	 ansular functors whose circle category is semisimple and has a simple unit.
 	 These dimension formulas also hold for semisimple modular functors based on a category with simple unit.
 	 This can indeed be verified for certain specific constructions such as the Reshetikhin-Turaev modular functor for a semisimple modular category \cite{rt1,turaev}. However, the point is that the considerations above are not limited to certain constructions that happen to be known, but for \emph{all} possible ones. 
 	\end{example}

For a vertex operator algebra $V$,
appropriate choices for a category 
of $V$-modules
yield a ribbon Grothendieck-Verdier category by~\cite[Theorem~2.12]{alsw}.
More precisely, the category of modules must contain $V$, must be
closed under taking
contragredients and must satisfy the two technical assumptions of \cite[Proposition~2.10]{alsw}.
The duality functor sends a module $M$ to its contragredient module $M^*$. 
	If $V$ is $C_2$-cofinite, this category will be finite and the monoidal product will be right exact, i.e.\ we obtain a ribbon Grothendieck-Verdier category in $\Rexf$. Consequently, the opposite category will naturally be a ribbon Grothendieck-Verdier category in $\Lexf$.
	We will denote this category by $\cat{C}_V$.
	
	\begin{corollary}\label{corvoa}
	Let $V$ be a $C_2$-cofinite vertex operator operator algebra.
	Then the ribbon Grothen\-dieck-Verdier structure in $\Lexf$ on any category $\cat{C}_V$ of $V$-modules 
	subject to the above-mentioned restrictions extends
	to an ansular functor.
	Up to equivalence, this extension is unique.
	The vertex operator algebra is self-dual, i.e.\ $V^*\cong V$, if and only if the functors obtained by evaluation of the ansular functor on oppositely oriented disks are adjoint.
\end{corollary}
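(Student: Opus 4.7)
The plan is to read off the three assertions from Theorem~\ref{thmclassansfunctor} combined with Corollary~\ref{corrcat}, using \cite[Theorem~2.12]{alsw} as the input from vertex operator algebra theory.

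For the existence and uniqueness statements, I would first invoke \cite[Theorem~2.12]{alsw}, which for a $C_2$-cofinite $V$ and an admissible module category $\cat{C}_V$ as in the hypothesis equips $\cat{C}_V$ with the structure of a ribbon Grothendieck-Verdier category in $\Rexf$, with monoidal unit $V$, duality functor given by contragredient $M \mapsto M^*$, and dualizing object $V^*$. Passing to the opposite yields a ribbon Grothendieck-Verdier category in $\Lexf$ if one prefers that convention; either way, Theorem~\ref{thmclassansfunctor} applies and produces an ansular functor, unique up to equivalence, whose genus zero restriction recovers this Grothendieck-Verdier structure.

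For the final equivalence, the plan is to identify the self-duality of $V$ with the $r$-category condition on $\cat{C}_V$ and then apply Corollary~\ref{corrcat}. The dualizing object $K$ of any Grothendieck-Verdier category satisfies $K \cong DI$: evaluating the representability $\cat{C}(DX,-)\cong \cat{C}(K,X\otimes -)$ at $X=I$ and applying the Yoneda lemma gives $DI \cong K$. Specialising to $\cat{C}_V$, where $I = V$ and $D = (-)^*$, this becomes $K\cong V^*$, so the $r$-category condition $K \cong I$ is exactly $V \cong V^*$. Corollary~\ref{corrcat} then identifies this $r$-category condition with the adjointness of the two oppositely oriented disk functors $A(\mathbb{D}^2:\emptyset\to \mathbb{S}^1)$ and $A(\mathbb{D}^2:\mathbb{S}^1\to\emptyset)$, giving the claimed equivalence.

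The main obstacle I anticipate is purely bookkeeping: the result of \cite{alsw} is formulated in $\Rexf$ while Corollary~\ref{corrcat} is phrased in $\Lexf$, so one must check that the translation between the two conventions (via passage to the opposite category) preserves the identifications $I = V$ and $K = V^*$, as well as the adjointness characterisation of disk functors (the latter being essentially symmetric under orientation reversal). Once these conventions are matched up, no additional computation is needed.
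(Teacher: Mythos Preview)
Your proposal is correct and follows essentially the same route as the paper: invoke \cite[Theorem~2.12]{alsw} to obtain the ribbon Grothendieck-Verdier structure, apply Theorem~\ref{thmclassansfunctor} for existence and uniqueness, and then use Corollary~\ref{corrcat} together with the identification of the $r$-category condition with $V\cong V^*$ for the final statement. Your additional remarks on why $K\cong DI\cong V^*$ and on the $\Rexf$/$\Lexf$ bookkeeping simply make explicit what the paper leaves implicit.
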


\begin{proof}
	Thanks to \cite[Theorem~2.12]{alsw}, this is a consequence of Theorem~\ref{thmclassansfunctor}.
	For the last statement on self-duality, one needs to observe that $V$ is by definition self-dual if and only if the dualizing object of $\cat{C}_V$ isomorphic to the monoidal unit.
	Then the last statement follows from 
	Corollary~\ref{corrcat}.
	\end{proof}

	A description of the ansular functor associated to $V$ in terms of the category $\cat{C}_V$ of modules is given in Corollary~\ref{corallansular}
	 while a description in terms of the vertex operator algebra $V$ is beyond the scope of this article.
	The reason for our interest in vertex operator algebras is that they provide non-exact examples:

\begin{corollary}\label{cornonexact}	
	There exist $\Lexf$-valued ansular functors whose underlying monoidal category is not exact and hence not rigid.
	\end{corollary}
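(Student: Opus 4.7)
The plan is to exhibit an explicit example by leveraging the classification result (Theorem~\ref{thmclassansfunctor}) together with the vertex operator algebra input provided by Corollary~\ref{corvoa}. Since the classification identifies $\Lexf$-valued ansular functors with ribbon Grothendieck-Verdier categories in $\Lexf$, it suffices to exhibit a ribbon Grothendieck-Verdier category in $\Lexf$ whose monoidal product is not exact.

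First I would invoke Corollary~\ref{corvoa}: given a $C_2$-cofinite vertex operator algebra $V$ whose admissible module category $\cat{C}_V$ satisfies the hypotheses of \cite[Proposition~2.10]{alsw}, the results of Allen-Lentner-Schweigert-Wood \cite{alsw} endow $\cat{C}_V$ with the structure of a ribbon Grothendieck-Verdier category in $\Rexf$, hence (by passing to opposites) in $\Lexf$. Corollary~\ref{corvoa} then gives a (unique up to equivalence) extension of this structure to an $\Lexf$-valued ansular functor. Thus we have reduced the problem to producing a single vertex operator algebra in this class whose monoidal product on modules fails to be exact.

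For the explicit example I would take the $\cat{W}_{2,3}$ triplet model, which is already flagged in the introduction as satisfying exactly the needed properties: it is $C_2$-cofinite, its module category fits the framework of \cite{alsw} and hence carries a ribbon Grothendieck-Verdier structure, and (by the results of \cite{grw} cited in the introduction) its monoidal product is known to be non-exact. Applying Corollary~\ref{corvoa} to $V=\cat{W}_{2,3}$ yields an ansular functor whose underlying monoidal category has a non-exact tensor product; since rigidity of a monoidal category in $\Lexf$ forces exactness of the monoidal product (as recalled in the introduction, cf.\ the contrast with finite tensor categories in \cite{etingofostrik,egno}), this category cannot be rigid either, which is the desired conclusion.

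The proof is essentially an assembly of already established inputs, so there is no genuine technical obstacle left; the only subtlety to be careful about is that the non-exactness established in \cite{grw,alsw} is for the original $\Rexf$-version, and one should note that the non-exactness is preserved when passing to the $\Lexf$-side via the dualization convention explained just above Theorem~\ref{thmclassansfunctor}. This is automatic because exactness or non-exactness of the monoidal product is a property that transfers across the equivalence $\Rexf \simeq \Lexf^\opp$ used here.
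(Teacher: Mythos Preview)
Your proposal is correct and follows essentially the same route as the paper: specialize Corollary~\ref{corvoa} to the $\cat{W}_{2,3}$ triplet model from \cite{grw}, whose module category has a non-exact monoidal product. Your additional remark about the passage between $\Rexf$ and $\Lexf$ is a harmless elaboration not present in the paper's one-line proof.
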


\begin{proof}
	In Corollary~\ref{corvoa}, we can specialize $V$ to be the $\cat{W}_{2,3}$ triplet model \cite{grw} which has a non-exact monoidal product.
	\end{proof}

	\small


\begin{thebibliography}{EGNO15}
	
	\bibitem[AFH21]{afh}
	D.~Ayala, J.~Francis, and A.~Howard.
	\newblock Natural symmetries of secondary {H}ochschild homology.
	\newblock arXiv:2111.08798 [math.AT], 2021.
	
	\bibitem[ALSW21]{alsw}
	R.~Allen, S.~Lentner, C.~Schweigert, and S.~Wood.
	\newblock Duality structures for module categories of vertex operator algebras
	and the {F}eigin {F}uchs boson.
	\newblock arXiv:2107.05718 [math.{QA}], 2021.
	
	\bibitem[Bar79]{barr}
	M.~Barr.
	\newblock {\em $\star$-autonomous categories}, volume 572 of {\em Lecture Notes
		in Math.}
	\newblock Springer, 1979.
	
	\bibitem[BD13]{bd}
	M.~Boyarchenko and V.~Drinfeld.
	\newblock A duality formalism in the spirit of {G}rothendieck and {V}erdier.
	\newblock {\em Quantum Top.}, 4(4):447--489, 2013.
	
	\bibitem[BK01]{baki}
	B.~Bakalov and A.~Kirillov.
	\newblock {\em Lectures on tensor categories and modular functors}, volume~21
	of {\em University Lecture Series}.
	\newblock Am. Math. Soc., 2001.
	
	\bibitem[Con83]{connes}
	A.~Connes.
	\newblock {Cohomologie cyclique et foncteurs $\text{Ext}^n$}.
	\newblock {\em C. R. Acad. Sci. Paris Ser. I Math.}, 296:953--958, 1983.
	
	\bibitem[Cos04]{costello}
	K.~Costello.
	\newblock The {A}-infinity operad and the moduli space of curves.
	\newblock arXiv:math/0402015 [math.AG], 2004.
	
	\bibitem[EGNO15]{egno}
	P.~Etingof, S.~Gelaki, D.~Nikshych, and V.~Ostrik.
	\newblock {\em Tensor categories}, volume 205 of {\em Math. Surveys Monogr.}
	\newblock Am. Math. Soc., 2015.
	
	\bibitem[EO04]{etingofostrik}
	P.~Etingof and V.~Ostrik.
	\newblock {Finite tensor categories}.
	\newblock {\em Mosc. Math. J.}, 4(3):627--654, 2004.
	
	\bibitem[Gia11]{giansiracusa}
	J.~Giansiracusa.
	\newblock {The framed little 2-discs operad and diffeomorphisms of
		handlebodies}.
	\newblock {\em J. Top.}, 4(4):919--941, 2011.
	
	\bibitem[GK95]{gk}
	E.~Getzler and M.~Kapranov.
	\newblock Cyclic operads and cyclic homology.
	\newblock In R.~Bott and S.-T. Yau, editors, {\em Conference proceedings and
		lecture notes in geometry and topology}, pages 167--201. Int. Press, 1995.
	
	\bibitem[GK98]{gkmod}
	E.~Getzler and M.~Kapranov.
	\newblock {Modular operads}.
	\newblock {\em Compositio Math.}, 110:65--126, 1998.
	
	\bibitem[Gra73]{gramain}
	A.~Gramain.
	\newblock {Le type d'homotopie du groupe des difféomorphismes d'une surface
		compacte}.
	\newblock {\em Ann. Sc. E.N.S.}, 6(1):53--66, 1973.
	
	\bibitem[GRW09]{grw}
	M.~Gaberdiel, I.~Runkel, and S.~Wood.
	\newblock {Fusion rules and boundary conditions in the $c = 0$ triplet model}.
	\newblock {\em J. Phys. A}, 42:325--403, 2009.
	
	\bibitem[Hat76]{hatcher}
	A.~Hatcher.
	\newblock Spaces of incompressible surfaces. {U}pdated version of the paper
	`{H}omeomorphisms of sufficiently large $p^2$-irreducible 3-manifolds' in
	\emph{Topology} 15:343--347, 1976, available from the author's web page at
	\texttt{pi.math.cornell.edu/\~{}hatcher/Papers/emb.pdf}.
	\newblock 1976.
	
	\bibitem[HH12]{hahe}
	U.~Hamenstädt and S.~Hensel.
	\newblock {The geometry of the handlebody groups I: {D}istortion}.
	\newblock {\em J. Top. Anal.}, 4(1):71--97, 2012.
	
	\bibitem[HLZ11]{hlz}
	Y.-Z. Huang, J.~Lepowsky, and L.~Zhang.
	\newblock Logarithmic tensor category theory, {VIII}: {B}raided tensor category
	structure on categories of generalized modules for a conformal vertex
	algebra.
	\newblock arXiv:1110.1931 [math.QA], 2011.
	
	\bibitem[KL01]{kl}
	T.~Kerler and V.~V. Lyubashenko.
	\newblock {\em Non-Semisimple Topological Quantum Field Theories for
		3-Manifolds with Corners}, volume 1765 of {\em Lecture Notes in Math.}
	\newblock Springer, 2001.
	
	\bibitem[Lod87]{loday}
	J.-L. Loday.
	\newblock {Homologies diédrale et quaternionique}.
	\newblock {\em Adv. Math.}, 66:119--148, 1987.
	
	\bibitem[Lyu95]{lyu}
	V.~V. Lyubashenko.
	\newblock {Modular transformations for tensor categories}.
	\newblock {\em J. Pure Appl. Alg.}, 98:279--327, 1995.
	
	\bibitem[Lyu96]{lyulex}
	V.~V. Lyubashenko.
	\newblock {Ribbon abelian categories as modular categories}.
	\newblock {\em J. Knot Theory and its Ramif.}, 5:311--403, 1996.
	
	\bibitem[MLM92]{maclanemoerdijk}
	S.~Mac~Lane and I.~Moerdijk.
	\newblock {\em Sheaves in Geometry and Logic}.
	\newblock Springer Universitext. Springer, 1992.
	
	\bibitem[MS89]{ms89}
	G.~Moore and N.~Seiberg.
	\newblock {Classical and Quantum Conformal Field Theory}.
	\newblock {\em Comm. Math. Phys.}, 123:177--254, 1989.
	
	\bibitem[MW22a]{cyclic}
	L.~Müller and L.~Woike.
	\newblock {Cyclic framed little disks algebras, {G}rothendieck-{V}erdier
		duality and handlebody group representations}.
	\newblock {\em Quart. J. Math. (online first)}, 2022.
	
	\bibitem[MW22b]{mwdiff}
	L.~Müller and L.~Woike.
	\newblock The diffeomorphism group of the solid closed torus and {H}ochschild
	homology.
	\newblock Accepted for publication in \emph{Proc. Amer. Math. Soc.}
	arXiv:2201.03920 [math.AT], 2022.
	
	\bibitem[NS18]{ns}
	T.~Nikolaus and P.~Scholze.
	\newblock {On topological cyclic homology}.
	\newblock {\em Acta Math.}, 221(2):203--409, 2018.
	
	\bibitem[Rie14]{riehl}
	E.~Riehl.
	\newblock {\em Categorical Homotopy Theory}, volume~24 of {\em New Math.
		Monogr.}
	\newblock Cambridge University Press, 2014.
	
	\bibitem[RT90]{rt1}
	N.~Reshetikhin and V.~G. Turaev.
	\newblock {Ribbon graphs and their invariants derived from quantum groups}.
	\newblock {\em Comm. Math. Phys.}, 127:1--26, 1990.
	
	\bibitem[Seg88]{Segal}
	G.~Segal.
	\newblock {Two-dimensional conformal field theories and modular functors}.
	\newblock In {\em {IX International Conference on Mathematical Physics
			(IAMP)}}, 1988.
	
	\bibitem[SP09]{schommerpries}
	C.~J. Schommer-Pries.
	\newblock {\em The classification of two-dimensional extended topological field
		theories}.
	\newblock PhD thesis, Berkeley, 2009.
	
	\bibitem[Spa00]{spalinski}
	J.~Spaliński.
	\newblock {Homotopy theory of dihedral and quaternionic sets}.
	\newblock {\em Topology}, 39:555--572, 2000.
	
	\bibitem[SW03]{salvatorewahl}
	P.~Salvatore and N.~Wahl.
	\newblock {Framed discs operads and {B}atalin-{V}ilkovisky algebras}.
	\newblock {\em Quart. J. Math.}, 54:213--231, 2003.
	
	\bibitem[Tho79]{thomason}
	R.~W. Thomason.
	\newblock {Homotopy colimits in the category of small categories}.
	\newblock {\em Math. Proc. Cambridge Philos. Soc.}, 85(1):91--109, 1979.
	
	\bibitem[Til98]{tillmann}
	U.~Tillmann.
	\newblock {$\mathcal{S}$-Structures for $k$-Linear Categories and the
		Definition of a Modular Functor}.
	\newblock {\em J. London Math. Soc.}, 58(1):208--228, 1998.
	
	\bibitem[Tsy83]{tsygan}
	B.~L. Tsygan.
	\newblock {Homology of matrix Lie algebras over rings and the Hochschild
		homology}.
	\newblock {\em Uspekhi Mat. Nauk}, 38(2):217--218, 1983.
	
	\bibitem[Tur94]{turaev}
	V.~G. Turaev.
	\newblock {\em Quantum Invariants of Knots and 3-Manifolds}, volume~18 of {\em
		Studies in Math.}
	\newblock De Gruyter, 1994.
	
	\bibitem[Wah01]{wahlthesis}
	N.~Wahl.
	\newblock {\em Ribbon braids and related operads}.
	\newblock PhD thesis, Oxford, 2001.
	
	\bibitem[Waj98]{Wajnryb}
	B.~Wajnryb.
	\newblock {Mapping class group of a handlebody}.
	\newblock {\em Fund. Math.}, 158:195--228, 1998.
	
	\bibitem[Zet18]{zetsche}
	S.~J. Zetsche.
	\newblock Generalised duality theory for monoidal categories and applications.
	\newblock Master thesis, {Hamburg}, 2018.
	
\end{thebibliography}
\end{document}